\pdfoutput=1
\documentclass{IEEEojcsys}

\makeatletter
\let\IEEEojcsys@titlefont\titlefont
\let\titlefont\@undefined
\AtBeginDocument{\let\titlefont\IEEEojcsys@titlefont}
\makeatother

\usepackage{color,array}
\usepackage{graphicx}
\usepackage{amsmath,amssymb,amsthm}
\usepackage{mathrsfs}       % \mathscr
\usepackage{mathtools}
\usepackage{arydshln}       % dashed lines in arrays
\usepackage{bbm}            % \mathbbm{1}
\usepackage{tikz}
\usetikzlibrary{positioning,calc,arrows,shapes,shadows}
\usepackage{pgfplots}
\pgfplotsset{compat=1.17}
\usepackage{import}
\usepackage{nicefrac}
\usepackage{etoolbox}       % used below to patch the class page styles
\usepackage[colorlinks,urlcolor=blue,linkcolor=blue,citecolor=blue]{hyperref}
\usepackage{MnSymbol}

\jvol{}
\jnum{}
\paper{}
\pubyear{2026}
\receiveddate{}
\accepteddate{}
\publisheddate{}
\currentdate{}
\doiinfo{}

\makeatletter
\def\@revised{}
\let\IEEEojcsys@receivedfont\receivedfont
\def\receivedfont{\IEEEojcsys@receivedfont Preprint}
\makeatother

\makeatletter
\patchcmd{\ps@headings}{VOLUME\ \@jvol\ \@pubyear}{PREPRINT\ \@pubyear}{}{%
  \ClassWarningNoLine{IEEEojcsys}{Could not patch odd/even running footer}}
\patchcmd{\ps@headings}{VOLUME\ \@jvol\ \@pubyear}{PREPRINT\ \@pubyear}{}{%
  \ClassWarningNoLine{IEEEojcsys}{Could not patch odd/even running footer}}
\patchcmd{\ps@plain}{VOLUME\ \@jvol,\ \@pubyear}{PREPRINT\ \@pubyear}{}{%
  \ClassWarningNoLine{IEEEojcsys}{Could not patch title-page footer}}
\patchcmd{\ps@plain}{VOLUME\ \@jvol,\ \@pubyear}{PREPRINT\ \@pubyear}{}{%
  \ClassWarningNoLine{IEEEojcsys}{Could not patch title-page footer}}
\ps@headings
\makeatother

\newtheorem{theorem}{Theorem}
\newtheorem{lemma}[theorem]{Lemma}
\newtheorem{proposition}[theorem]{Proposition}

\newtheorem{definition}[theorem]{Definition}
\newtheorem{assumption}[theorem]{Assumption}
\newtheorem{remark}[theorem]{Remark}
\newtheorem{example}[theorem]{Example}

\def\dn{1ex}
\tikzset{
  auto,
  block/.style={rectangle, draw, drop shadow, fill=white,
                 minimum height=#1, minimum width=#1, inner sep=\dn},
  block/.default=5ex,
  >={latex},
  every path/.style={rounded corners},
}

\newcommand{\bbC}{\mathbb{C}}

\newcommand{\bbE}{\mathbb{E}}

\newcommand{\bbN}{\mathbb{N}}

\newcommand{\bbR}{\mathbb{R}}

\newcommand{\bH}{\boldsymbol{H}}

\newcommand{\bL}{\boldsymbol{L}}
\newcommand{\bM}{\boldsymbol{M}}
\newcommand{\bN}{\boldsymbol{N}}

\newcommand{\bPi}{\boldsymbol{\Pi}}

\newcommand{\bg}{\boldsymbol{g}}
\newcommand{\bh}{\boldsymbol{h}}

\newcommand{\bw}{\boldsymbol{w}}

\newcommand{\bz}{\boldsymbol{z}}
\newcommand{\bone}{\boldsymbol{1}}

\newcommand{\calA}{\mathcal{A}}
\newcommand{\calB}{\mathcal{B}}
\newcommand{\calC}{\mathcal{C}}
\newcommand{\calD}{\mathcal{D}}

\newcommand{\calF}{\mathcal{F}}

\newcommand{\calK}{\mathcal{K}}

\newcommand{\calP}{\mathcal{P}}

\newcommand{\calT}{\mathcal{T}}

\newcommand{\scrI}{\mathscr{I}}

\newcommand{\scrS}{\mathscr{S}}

\newcommand{\ovl}{\overline}
\newcommand{\subsEq}{\subseteq}
\DeclareMathOperator{\diag}{diag}

   \everymath=\expandafter{\the\everymath\displaystyle}
   \IfFileExists{scrextend.sty}{
     \usepackage[fontsize=10.000000pt]{scrextend}
   }{
     \renewcommand{\normalsize}{\fontsize{10.000000}{12.000000}\selectfont}
     \normalsize
   }
   \usepackage{amsmath}\usepackage{amssymb}
   \makeatletter\@ifpackageloaded{underscore}{}{\usepackage[strings]{underscore}}\makeatother

\begin{document}

\sptitle{Regular Paper}

\title{Stochastic Gradient Descent with Momentum:
       Analysis and Synthesis via Integral Quadratic Constraints}

\author{DENNIS GRAMLICH\affilmark{1},
        CARSTEN W. SCHERER\affilmark{2},
        CHRISTIAN EBENBAUER\affilmark{1}}

\affil{Chair of Intelligent Control Systems, RWTH Aachen University,
       52074 Aachen, Germany}
\affil{Chair of Mathematical Systems Theory, University of Stuttgart,
       70569 Stuttgart, Germany}

\corresp{Corresponding author: Dennis Gramlich
         (\href{mailto:dennis.gramlich@ic.rwth-aachen.de}%
               {dennis.gramlich@ic.rwth-aachen.de})}

\authornote{This work was supported by the German Research Foundation DFG
            under Grant EB425/9-1. C. Scherer is funded by Deutsche Forschungsgemeinschaft (DFG, German Research Foundation) under Germany's Excellence Strategy - EXC 2075 – 390740016. He acknowledges the support by the Stuttgart Center for Simulation Science (SimTech).}

\markboth{GRAMLICH ET AL.}{SGD WITH MOMENTUM AND STOCHASTIC IQCs}

%===============================================================================
\begin{abstract}
This article applies \emph{dynamic} integral quadratic constraints (IQCs) to
the analysis and synthesis of accelerated stochastic gradient
algorithms.
We consider composite objective functions whose gradient can be
approximated via mini-batch sampling and we model the resulting stochastic gradient oracle as a feedback nonlinearity
in the spirit of Lur'e systems from robust control.
Our first main contribution is a family of IQCs that characterize the second-order statistics of mini-batch gradients, extending the classical Zames--Falb multipliers.
Our second contribution is a semidefinite-program for
certifying exponential convergence rates of stochastic gradient algorithms, and a complementary condition for bounding the asymptotic variance caused by non-vanishing gradient noise.
Our third contribution is a convex synthesis procedure that identifies mini-batch gradient algorithms with
the smallest certifiable convergence rate.
% Numerical experiments demonstrate that the synthesized algorithms approach the
% optimal deterministic Triple Momentum rate as the batch size \(b\) grows, and match
% the performance of $b$ sequential gradient descent steps for large condition
% numbers.
\end{abstract}

\begin{IEEEkeywords}
Integral Quadratic Constraints, Lur'e systems,
semidefinite programming, stochastic gradient descent, Zames--Falb multipliers
\end{IEEEkeywords}

\maketitle

%===============================================================================
\section{Introduction}
\label{sec:introduction}
%===============================================================================

We analyze mini-batch gradients and their use in stochastic optimization
methods such as stochastic gradient descent (SGD).
Mini-batch gradients are particularly important for
\emph{composite} objective functions of the form
\begin{align}
   z \mapsto f(z) = \frac{1}{N} \sum_{i=1}^N f_i(z). \label{eq:compositeObjective}
\end{align}
Since $N$ is typically large, evaluating the full gradient $z \mapsto \nabla f(z)$
is computationally expensive.
Instead, stochastic gradient methods rely on mini-batch gradients
\begin{align}
   z \mapsto \nabla f_{\scrI}(z) = \frac{1}{|\scrI|} \sum_{i \in \scrI} \nabla f_i(z),
   \label{eq:sampledGrad}
\end{align}
where the mini-batch $\scrI \subseteq \{1, \ldots, N\}$ is a randomly sampled index set with size
$b = |\scrI|$.
For $b \ll N$, the mini-batch gradient $\nabla f_{\scrI}$ requires
significantly fewer computations than the full gradient, but introduces
uncertainty (gradient noise).

We consider two mini-batch sampling schemes: uniform sampling with replacement, where \(\scrI\) is generated by \(b\) uniform i.i.d. random variables from \(\{1, \ldots, N\}\), and uniform sampling without replacement, where \(\scrI\) is chosen uniformly from all possible subsets of \(\{1, \ldots, N\}\) of size $b$. When sampling with replacement, $\scrI$ is technically a random multiset that may contain repeated indices.
Anticipating later insights, we note that sampling with replacement yields potentially looser convergence bounds, but has
the advantage of providing estimates that are independent from the number of objective components $N$ in \eqref{eq:compositeObjective}. Since mini-batch sampling preserves the expected value,
$\nabla f(z) = \mathbb{E}[\nabla f_{\scrI}(z)]$,
the deviation of the mini-batch gradient from the full gradient can be
regarded as a zero-mean stochastic disturbance. 

We work under the following standing assumption on \eqref{eq:compositeObjective}.

\begin{definition}[$m$-convexity and $l$-concavity, \cite{scherer2023robust}]
\label{def:mlConvexity}
A differentiable function $f:\bbR^d \to \bbR$ belongs to the class
$\scrS_{m,l}$ if
\begin{align}
   f^m: z \mapsto f(z) - \tfrac{m}{2}\|z\|^2,
   \qquad
   f^l: z \mapsto f(z) - \tfrac{l}{2}\|z\|^2
   \label{eq:mlConvexity}
\end{align}
are convex and concave, respectively, for constants $m,l \in \bbR$ with $m \leq l$. The restricted class
$\scrS_{m,l}^z := \{f \in \scrS_{m,l} \mid \nabla f(z) = 0\}$
contains all functions from $\scrS_{m,l}$ with zero gradient at
the point 
$z \in \bbR^d$.
\end{definition}

\begin{assumption}
\label{ass:lmConvexity}
The functions $(f_i)_{i=1}^N$ are in $\scrS_{m,l}$ for the same constants
$0 < m \leq l$.
\end{assumption}

Since $f$ is the average of the $f_i$, it is also in $\scrS_{m,l}$.
Assuming \(f \in \scrS_{m,l}\) is the standard for analyzing linear convergence of gradient-based algorithms. It corresponds to $m$-strong convexity and $l$-Lipschitz-continuous gradients. This assumption implies that \(f\) has a global minimizer \(z_\star^f\).

\begin{definition}[Irreducible variance]
\label{def:irreducibleVariance}
The \emph{irreducible variance} is the real, non-negative constant
\begin{align*}
   \sigma_\infty^2
   \coloneq
   \frac{1}{N} \sum_{i = 1}^N \|\nabla f_i(z_\star^f)\|^2
   .
\end{align*}
\end{definition}

\begin{assumption}
\label{ass:irreducibleVariance}
The irreducible variance satisfies $\sigma_\infty^2 = 0$.
\end{assumption}

Assumption~\ref{ass:irreducibleVariance} holds in the so-called
\emph{interpolation regime}~\cite{ma2018power}, i.e., in the case where deep learning models have sufficiently many parameters to interpolate their training data.
The analysis of this article is split into two parts corresponding to two regimes: \emph{exponential convergence analysis} under Assumption~\ref{ass:irreducibleVariance}, and \emph{asymptotic variance analysis} without it.

Figure~\ref{fig:nesterovConvergence} gives an impression of these regimes for Nesterov's
Accelerated Gradient Descent (NAGD)~\cite{nesterov1983method} applied to the
objectives 
\begin{align}
   f_1(z) &= \tfrac{1}{2}\|z - 0.12\|^2,
   &
   f_2(z) = f_3(z) &= \tfrac{1}{20}\|z + 0.6\|^2
   \label{eq:specificCompositeObjective}
\end{align}
with mini-batch gradients ($b = 1$). Details on the data generation for Figure~\ref{fig:nesterovConvergence} are given in Appendix~\ref{app:detailsLowerBound}. The comparison of the convergence of NAGD for either mini-batch or full gradients reveals that the stochastic noise in the mini-batch gradients has profound effects. With mini-batch gradients, NAGD exhibits first a phase of exponential convergence (corresponding to the sloped dashed line) and, subsequently, reaches an error floor (corresponding to the flat dashed line). The error floor is what is being analyzed by the asymptotic variance analysis. We further regard the exponential convergence analysis under Assumption~\ref{ass:irreducibleVariance} as an approximate analysis of the first phase of observed exponential convergence: In this phase, while not being identical to zero, the gradient noise is small compared to \(z - z_\star^f\). Comparing the expected error of this phase with the full gradient trajectories indicates that even the initial phase of convergence of NAGD with mini-batch gradients is slower than with full gradients.

Figure~\ref{fig:asymptoticVariance} shows the asymptotic variance (error
floor) for Heavy Ball (HB)~\cite{polyak1964some}, NAGD, Gradient Descent
(GD), and Triple Momentum (TM) \cite{van2017fastest} with batch size $b = 1$ and parameters
optimized for the deterministic case.
As the condition number $\kappa = l/m$ increases, the variance grows
significantly.
Curves escaping to infinity indicate instability.
Critically, the curves shown are lower bounds on the worst-case error floor, because they are obtained for the specific instance \eqref{eq:specificCompositeObjective} of \eqref{eq:compositeObjective}.
The instability of NAGD in the stochastic setting has also been reported
in~\cite{assran2020convergence} and motivates the algorithm design
of this article.

To understand how stochastic uncertainty induces these qualitative changes,
we decompose the mini-batch gradient into the sum of
\begin{align}
   \nabla f(z), \quad
   \nabla f_{\scrI}(z) - \nabla f(z) - \nabla f_{\scrI}(z_\star^f),
   \quad
   \nabla f_{\scrI}(z_\star^f).
   \label{eq:batchGradComponents}
\end{align}
This sum recovers $\nabla f_{\scrI}(z)$.
The first component is the full gradient, driving optimization towards
$z_\star^f$.
The second is $z$-dependent random noise that vanishes at $z_\star^f$. It
is the dominating noise component far from \(z_\star^f\), because its dependence on \(z\) allows its variance to grow with the distance of \(z\) from \(z_\star^f\). This component causes the degraded convergence rate in the initial
phase in Fig.~\ref{fig:nesterovConvergence} and the instability in Fig.~\ref{fig:asymptoticVariance}.
The third component is irreducible noise present even at the optimum and responsible for the
error floor.

The described scenario is amenable to analysis via integral quadratic
constraints (IQCs)~\cite{megretski1997system}.
The use of IQCs for accelerated gradient methods is motivated by the fact that
IQCs led to the fastest deterministic gradient method for $\scrS_{m,l}$
according to~\cite{van2017fastest}.
Our contributions are as follows.
\begin{enumerate}
   \item A generalization of the Zames--Falb IQCs to mini-batch gradients
         of the form~\eqref{eq:sampledGrad}.
   \item Convex algorithm analysis and synthesis conditions based on
         these novel IQCs.
   \item Synthesized methods that are optimal in the interpolation regime in the sense that:
         
   ~\(\bullet\) As the batch size $b$ goes to infinity, the performance of the synthesized method approaches that of the optimal deterministic TM method.

   ~\(\bullet\) As the condition number $\kappa = l/m$ goes to infinity, the synthesized method approaches the convergence rate of $b$ sequential GD steps.
\end{enumerate}

\begin{figure}[t]
   \centering
   \input{nesterov_variance_evolution.pgf}
   \caption{Expected optimization error
            $\bbE\|z_t - z_\star^f\|^2$ for NAGD with full gradients
            vs.\ mini-batch gradients ($b = 1$).
            Dashed lines show asymptotic rates: linear convergence far from
            the optimum, and an error floor near it.}
   \label{fig:nesterovConvergence}
\end{figure}

\begin{figure}[t]
   \centering
   \vspace{-4mm}
   \input{sgo_variance.pgf}
   \caption{Asymptotic variance of stochastic gradient methods (batch size
            $b = 1$) vs.\ condition number $\kappa = l/m$.
            Parameters are optimized for the deterministic case.
            Curves escaping to infinity indicate instability.}
   \label{fig:asymptoticVariance}
\end{figure}

%===============================================================================
\section{Notation}
\label{sec:notation}
%===============================================================================

Here, we collect the principal notations used in the article.

\paragraph*{Sets and numbers}
The symbol $\bbR$ denotes the real numbers, $\bbR_{\geq 0}$ the non-negative reals, and
$\bbN_0 = \{0,1,2,\ldots\}$ the non-negative integers.
We write $\{1,\ldots,N\}$ for integer ranges.

\paragraph*{Vectors and matrices}
Column vectors are denoted by lowercase letters; matrices by uppercase.
The symbol $I_n \in \bbR^{n \times n}$ is the identity matrix and
$0$ is the zero matrix of appropriate dimension (subscripts
are omitted when clear from context).
The all-ones vector is denoted $\bone \in \bbR^n$.
For a real, symmetric matrix $A$, $\lambda_{\min}(A)$ denotes its smallest eigenvalue.
The notation $A \succ 0$ ($A \succeq 0$) means $A$ is symmetric positive (semi)definite.
A diagonal matrix with vector $\nu$ on the diagonal is denoted $\diag(\nu)$.
The Kronecker product is denoted by $\otimes$.
The notation
\begin{align*}
   \begin{pmatrix}
      x_{t+1} \\
      \hline
      y_t
   \end{pmatrix}
   =
   \left(
   \begin{array}{c|c}
      A & B \\ \hline
      C & D
   \end{array}
   \right)
   \begin{pmatrix}
      x_t \\ 
      \hline
      u_t
   \end{pmatrix}
\end{align*}
is used throughout to display a linear time-invariant (LTI) system with state
$x_t$, input $u_t$ and output $y_t$; the horizontal and vertical lines
separate the state-update from the output equation and the state from the
input, respectively.

\paragraph*{Notation for stochastics}
The symbol $\bbE[\cdot]$ denotes the expectation and $\bbE[\cdot|\mathcal{F}]$ the conditional expectation w.r.t. a given $\sigma$-algebra $\mathcal{F}$. A sequence \((y_t)_{t \in \calT}\) of random variables with \(\calT = \{0,1,\ldots,\tau\} \subsEq \bbN_0\) or \(\calT = \bbN_0\) is referred to as a stochastic process. A \emph{filtration} is a sequence of $\sigma$-algebras $(\mathcal{F}_t)_{t \in \calT}$ such that $\mathcal{F}_t \subseteq \mathcal{F}_{t+1}$ for all $t$. A stochastic process $(y_t)_{t \in \calT}$ is \emph{adapted} to a filtration $(\mathcal{F}_t)_{t \in \calT}$ if $y_t$ is $\mathcal{F}_t$-measurable for all $t \in \calT$.

\paragraph*{Signal sequences}
Bold lowercase letters denote finite sequences, i.e., $\bz = (z_0, z_1, \ldots, z_\tau)$ and similarly for $\bg$, $\bw^1$,
$\bw^2$. These symbols are identified with the column vectors obtained by stacking the sequence members, e.g., $\bz = (z_0^\top, z_1^\top, \ldots, z_\tau^\top)^\top$.
Bold uppercase letters ($\bH$, $\bL$, etc.) denote block matrices with \(d \times d\)-blocks that operate on such stacked sequences.

%===============================================================================
\section{Related Work}
\label{sec:relatedWork}
%===============================================================================

The numerical analysis and synthesis of gradient optimization algorithms via
semidefinite programming (SDP) has gained considerable attention since the
foundational work of~\cite{drori2014performance} and~\cite{lessard2016analysis}.
In~\cite{lessard2016analysis}, the feedback representation of optimization
algorithms as Lur'e systems is identified, and the IQC framework is used to
derive convergence conditions.
In~\cite{van2017fastest}, the same framework leads to the Triple Momentum
algorithm, which achieves the fastest convergence rate among gradient methods
on $\scrS_{m,l}$.
A tutorial treatment of algorithm synthesis via dynamic IQCs and SDP is
given by~\cite{scherer2025tutorial}.

The vast majority of this line of research, however, addresses the
deterministic case. Attempts to generalize to the stochastic setting include~\cite{hu2017analysis} and~\cite{taylor2019stochastic}, but these are limited to analysis. Moreover,~\cite{taylor2019stochastic} imposes restrictive global bounds on the variance of the mini-batch gradient and \cite{hu2017analysis} relies on restrictive versions of stochastic IQCs, which hold path-wise under Assumption~\ref{ass:irreducibleVariance}. The present article extends both analysis and synthesis to the stochastic
setting.

Outside of the stochastic optimization context, stochastic generalizations of IQCs are proposed in \cite{ugrinovskii1999absolute,ugrinovskii1999finite,ugrinovskii2000output,petersen2006nonlinear,aberkane2007multiobjective}. However, a consensus on the appropriate definition of stochastic IQCs is still lacking, and the definition provided in the cited works is not suitable for the synthesis of optimization algorithms of the present article: the IQCs proposed in \cite{ugrinovskii1999absolute,ugrinovskii1999finite,ugrinovskii2000output,petersen2006nonlinear,aberkane2007multiobjective} involve the closed-loop system behavior, which is not available for synthesis. In contrast, the stochastic IQCs of the present article are defined in terms of the open-loop behavior of the system component to be described with IQCs, which is available for synthesis.

The instability of accelerated methods with stochastic gradients has been
studied in~\cite{cohen2018acceleration}, which motivates the present
algorithm design.
For the non-interpolation regime ($\sigma_\infty^2 > 0$),
\cite{moulines2011non} establishes that Polyak--Ruppert averaging yields
optimal asymptotic rates, while~\cite{ghadimi2013optimal} shows that momentum methods
achieve faster initial convergence.

% Lower bounds on convergence rates for SGD are studied
% in~\cite{kidambi2018insufficiency}, which shows that momentum does not improve
% the convergence rate of SGD with batch size $b = 1$, motivating our batch GD
% reference rates in Section~\ref{sec:numerics}.

%===============================================================================
\section{Algorithm Modeling}
\label{sec:algorithmModeling}
%===============================================================================

This section develops the system-theoretic framework for gradient optimization algorithms. Starting from elementary examples, we derive the structural constraints imposed by a minimal convergence requirement. We further argue that convergence analysis can be restricted to functions with minimizer at the origin, and identify the resulting setup as the classical synthesis configuration of robust control.

\subsection{Gradient Algorithms as Lur'e Systems}
\label{sec:lureRepresentation}

It is well-known that the simple GD iterative scheme
\begin{align}
   z_{t+1} = z_t - \alpha \nabla f_{\scrI_t}(z_t), \qquad t \in \bbN_0,
   \label{eq:gradientDescent}
\end{align}
for minimizing \eqref{eq:compositeObjective} with \(\alpha \in ]0, \frac{1}{l+m}]\) has the property that the expected quadratic error \(\bbE[\|z_t - z_\star^f\|^2]\) converges to a bounded interval \cite[Corollary 3]{hu2017analysis}. We use the iteration index $t$ to emphasize that~\eqref{eq:gradientDescent}
is a discrete-time dynamical system. Here, and for any following algorithm, mini-batches \(\scrI_t\) and \(\scrI_{t'}\) with $t \neq t'$ are sampled independently.
With state $x_t := z_t$ and input $w_t := \nabla f_{\scrI_t}(z_t)$,
GD takes the form
\begin{align*}
   x_{t+1} = x_t - \alpha w_t, \quad z_t = x_t,
   \qquad w_t = \nabla f_{\scrI_t}(z_t),
\end{align*}
i.e., the feedback interconnection of a linear system
$x_{t+1} = x_t - \alpha w_t$, $z_t = x_t$
with the random nonlinearity $w_t = \nabla f_{\scrI_t}(z_t)$.
Deterministic instances of such a feedback configurations are called \emph{Lur'e systems} in control theory.

\emph{Accelerated} methods additionally employ a \emph{momentum} term.
Nesterov's algorithm or TM, for instance, read
\begin{align}
   v_{t+2} = v_{t+1} + \beta(v_{t+1} - v_t)
   - \alpha \nabla f_{\scrI_t}\bigl(v_{t+1} + \gamma(v_{t+1} - v_t)\bigr)
   \label{eq:nesterov}
\end{align}
for suitable parameters $\alpha, \beta, \gamma$~\cite{nesterov2013introductory}.
Setting $z_t := v_{t+1} + \gamma(v_{t+1}-v_t)$, $w_t := \nabla f_{\scrI_t}(z_t)$, and
$x_t := (v_{t+1}, v_t)^\top \in \bbR^{2d}$, this rewrites as the second-order LTI
system
\begin{align*}
   x_{t+1} &= \begin{pmatrix} (1+\beta)I_d & -\beta I_d \\ I_d & 0 \end{pmatrix} x_t
              + \begin{pmatrix} -\alpha I_d \\ 0 \end{pmatrix} w_t,
   \\
   z_t &= \begin{pmatrix} (1+\gamma)I_d & -\gamma I_d \end{pmatrix} x_t
\end{align*}
in feedback with $w_t = \nabla f_{\scrI_t}(z_t)$, which is again a Lur'e system.

These examples motivate the general class of mini-batch gradient algorithms
\begin{align}
   \begin{pmatrix}
     x_{t+1}\\
     \hline
     z_t
   \end{pmatrix}
   =
   \left(
   \begin{array}{c|c}
     A & B \\
     \hline
     C & 0
   \end{array}
   \right)
   \begin{pmatrix}
     x_t\\
     \hline
     w_t
   \end{pmatrix},
   \qquad
   w_t = \nabla f_{\scrI_t}(z_t),
   \label{eq:linearGradientAlgorithm}
\end{align}
where $x_t \in \bbR^{n}$ is the algorithm state,
$z_t \in \bbR^d$ is the \emph{query point} submitted to the gradient oracle,
$w_t \in \bbR^d$ is the received (mini-batch) gradient, and
$A \in \bbR^{n \times n}$, $B \in \bbR^{n \times d}$,
$C \in \bbR^{d \times n}$ are the system matrices.
A block diagram of the Lur'e system~\eqref{eq:linearGradientAlgorithm} is
shown in Fig.~\ref{fig:lureDiagram}.

\subsection{Reduction to \texorpdfstring{\(d = 1\)}{d = 1}}
We reduce to dimension $d = 1$: the case $d > 1$ follows by replacing $A$, $B$, $C$ with $A \otimes I_d$, $B \otimes I_d$, $C \otimes I_d$, because the function classes $\scrS_{m,l}$ and all subsequent analysis
conditions are compatible with this Kronecker structure (cf. \cite{lessard2016analysis,scherer2023optimization}).

\begin{figure}[t]
\centering
\tikzstyle{block} = [draw,rectangle,thick,minimum height=2em,minimum width=2em,
shade, shading=axis, top color=lightgray, bottom color=white,
rounded corners=.15cm]
\begin{tikzpicture}[scale=0.95]
  \def\bw{3.4cm}
  \def\bh{1.6cm}
  \def\gh{1.0cm}
  \def\sep{2.0cm}

  % Algorithm (LTI) block
  \node[block, drop shadow, minimum height=4em] (G) at (0,0)
     {$
     \begin{pmatrix}
      x_{t+1}\\ z_t
     \end{pmatrix}
     =
     \left(
     \begin{array}{c|c}
      A & B\\
      \hline
      C & 0
     \end{array}
     \right)
     \begin{pmatrix}
      x_t\\ w_t
     \end{pmatrix}
     $};

  % Gradient oracle block
  \node[block, drop shadow, minimum height=3em] (nabla) at (0, 2cm)
     {$\nabla f_{\scrI_t}(\cdot)$};
        \draw[->] (G.east) -- ([xshift=0.6cm]G.east) |- node[right]{$z_t$} (nabla.east);
        \draw[->] (nabla.west) -| node[left]{$w_t$} ([xshift=-0.6cm]G.west) -- (G.west);
\end{tikzpicture}
\caption{Lur'e system representation of a gradient algorithm.
         The LTI block encodes the algorithm dynamics; the nonlinear block
         evaluates the (mini-batch) gradient oracle.}
\label{fig:lureDiagram}
\end{figure}

\subsection{Minimal Convergence Requirement}
\label{sec:minimalConvergence}

We derive a necessary structural condition on $(A,B,C)$ from a minimal
performance requirement on the asymptotic expected quadratic error.
Throughout, we impose the assumption that $(A,C)$ is
\emph{detectable}, i.e., that
$(A-\lambda I_n)x = 0$ and $Cx = 0$ jointly imply $x = 0$ for all $\lambda$ with $|\lambda| \geq 1$.
This is without loss of generality, because any state component that is unobservable through the query point $z_t$ does not influence the output; such components can be removed by a
state-space reduction without altering the algorithm's behavior.

We test algorithms of the form \eqref{eq:linearGradientAlgorithm} against the case $N = 1$ with the quadratic component function $f_1(z) = \frac{m}{2}(z - z_\star^f)^2$; so
$w_t = m(z_t - z_\star^f) = m(Cx_t - z_\star^f)$.
This is deterministic (no gradient noise).
The algorithm~\eqref{eq:linearGradientAlgorithm} reduces to the affine system
\begin{align}
   x_{t+1} = (A + BmC)\,x_t - Bm\,z_\star^f,
   \qquad z_t = Cx_t,
   \label{eq:quadraticAlgorithm}
\end{align}
whose quadratic error $\|z_t - z_\star^f\|^2$ is deterministic.
A minimal requirement for a sensible algorithm is that this error is bounded
for all initial conditions $x_0 \in \bbR^n$ and all $z_\star^f \in \bbR$.

\begin{lemma}[cf. \cite{michalowsky2021robust,scherer2023optimization}, Theorem~1 of \cite{scherer2025tutorial}]
   \label{lem:FixedPointCondition}
   Consider \eqref{eq:quadraticAlgorithm} with $(A,C)$ detectable. Then one of the following holds:
   \begin{enumerate}
      \item For any \(z_\star^f \in \bbR\), there exists a unique \(x_\star \in \bbR^n\) satisfying
      \begin{align}
         (A - I_n)x_\star = 0
         \qquad \text{and} \qquad
         Cx_\star = z_\star^f.
         \label{eq:equilibriumEquations}
      \end{align}
      \item For any \(c > 0\), there exist \(z_\star^f \in \bbR\) and \(x_0 \in \bbR^n\) such that
      \begin{align}
         \limsup_{t \to \infty}\|z_t - z_\star^f\|^2 \geq c.
         \label{eq:unboundedError}
      \end{align}
   \end{enumerate}
\end{lemma}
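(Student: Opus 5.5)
The plan is to first reduce the failure of alternative 1 to a purely algebraic condition, namely that $A - I_n$ is invertible. In that case I will show that bounded error forces convergence of the state to a nonzero fixed point of $A$, which is a contradiction.

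\emph{Step 1 (algebra).} Consider the linear map $x \mapsto ((A-I_n)x, Cx)$. Uniqueness in \eqref{eq:equilibriumEquations} never fails. Indeed, two solutions would differ by some $x \neq 0$ with $(A-I_n)x = 0$ and $Cx = 0$, which detectability of $(A,C)$ excludes (take $\lambda = 1$). The same argument gives $\ker(A-I_n) \cap \ker C = \{0\}$. If $\ker(A-I_n) \neq \{0\}$, it therefore contains some $x$ with $Cx \neq 0$. Scaling $x$ then yields a solution of \eqref{eq:equilibriumEquations} for every $z_\star^f$, so alternative 1 holds. Hence, if alternative 1 fails, then $\ker(A-I_n) = \{0\}$, i.e.\ $A - I_n$ is invertible. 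It remains to show that alternative 2 holds in this case.

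\emph{Step 2 (scaling).} Suppose alternative 2 fails. Then there is some $c>0$ such that $\limsup_{t}\|z_t - z_\star^f\|^2 < c$ for all $x_0$ and all $z_\star^f$. System \eqref{eq:quadraticAlgorithm} is linear in the pair $(x_0, z_\star^f)$. Replacing this pair by $s(x_0, z_\star^f)$ multiplies the error sequence $e_t := z_t - z_\star^f$ by $s$. Letting $s \to \infty$ shows $\limsup_t \|e_t\|^2 = 0$, so $e_t \to 0$ for every choice of data. Fix $z_\star^f = 1$ and $x_0 = 0$.

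\emph{Step 3 (observer argument, main step).} Rewrite the dynamics as $x_{t+1} = A x_t + Bm\, e_t$, which is \eqref{eq:quadraticAlgorithm} since $mCx_t - m z_\star^f = m e_t$. By detectability, choose $L$ such that $A - LC$ is Schur stable. Adding and subtracting $L C x_t = L z_t$ gives
\begin{align*}
   x_{t+1} = (A - LC)x_t + L z_t + Bm\, e_t .
\end{align*}
This is a Schur-stable LTI system driven by the input $L z_t + Bm e_t$, which converges to $L z_\star^f$. Hence $x_t \to \bar x := (I_n - A + LC)^{-1} L z_\star^f$.

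Passing to the limit in $x_{t+1} = A x_t + Bm e_t$ yields $\bar x = A\bar x$, so $\bar x = 0$ because $A - I_n$ is invertible. On the other hand, $C x_t = z_t \to z_\star^f = 1$ gives $C\bar x = 1$, which is a contradiction. Therefore alternative 2 holds whenever alternative 1 fails.

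I expect the main care to be needed in Step 3. One must justify convergence of $x_t$ itself, not merely boundedness of $z_t$; the detectability-based observer rewriting is exactly what supplies this. Step 2 is also needed, to turn the uniform bound from the negation of alternative 2 into actual convergence $e_t \to 0$.
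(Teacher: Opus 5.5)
Your proof is correct. It shares the paper's skeleton: negate alternative~2, use homogeneity in $(x_0, z_\star^f)$ to upgrade the uniform bound to $z_t - z_\star^f \to 0$, use detectability to pass from output convergence to state convergence, and read the equilibrium equations off the limit.

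The key step is implemented differently. The paper specializes to $z_\star^f = 0$ and all $x_0$ to conclude that $A + BmC$ is Schur stable. This tacitly uses that detectability survives the output injection $BmC$. It then gets existence and uniqueness in one stroke from the unique fixed point $x_\star$ of the affine map. All trajectories approach $x_\star$, so $Cx_\star = z_\star^f$ and hence $Ax_\star = x_\star$.

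You instead argue along a single trajectory with an observer gain $L$ making $A - LC$ Schur, and never need stability of the closed loop. You obtain uniqueness separately, from the PBH condition at $\lambda = 1$. Your route needs less: one trajectory, and no eigenvector argument for $A+BmC$. The paper's route yields the useful by-product that the algorithm is exponentially stable on the quadratic test function.

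One simplification is available. Your Step~1 reduction to invertibility of $A - I_n$ uses that $C$ has a single row, and it is dispensable. Step~3, run with an arbitrary $z_\star^f$ instead of $1$, already produces $\bar x$ with $A\bar x = \bar x$ and $C\bar x = z_\star^f$. Concluding alternative~1 directly this way also covers vector-valued query points, as the paper's argument does.
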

\begin{proof}
   We show that if 1) does not hold, then 2) holds. In particular, the two bullets are not exclusive.

   Denote with \(x_t(x_0,z_\star^f)\) the trajectory of~\eqref{eq:quadraticAlgorithm} starting from \(x_0\) and with parameter \(z_\star^f\). Since \eqref{eq:quadraticAlgorithm} is linear, \(x_t(x_0,z_\star^f)\) is a linear function of \(x_0\) and \(z_\star^f\). In particular, for \(x_0 = \varepsilon x_s\) and \(z_\star^f = \varepsilon z_s^f \in \bbR\), the trajectory is homogeneous in \(\varepsilon\), i.e., \(x_t(\varepsilon x_s, \varepsilon z_s^f) = \varepsilon x_t(x_s, z_s^f)\) for all \(t\). Consequently, if there exist \(x_s \in \bbR^n\) and \(z_s^f \in \bbR\) such that
   \begin{align*}
      \limsup_{t \to \infty}\|z_t(x_s, z_s^f) - z_s^f\|^2 > 0,
   \end{align*}
   then for any \(c > 0\), there exists \(\varepsilon > 0\) such that
   \begin{align*}
      c & \leq
      \varepsilon^2 \limsup_{t \to \infty}\|z_t(x_s, z_s^f) - z_s^f\|^2
      \\
      &=\limsup_{t \to \infty}\|z_t(\varepsilon x_s, \varepsilon z_s^f) - \varepsilon z_s^f\|^2
      ,
   \end{align*}
   which is the case \eqref{eq:unboundedError}.

   We conclude that if \eqref{eq:unboundedError} does not hold, then
   \begin{align}
      \limsup_{t \to \infty}\|z_t(x_0, z_\star^f) - z_\star^f\|^2 = 0, \qquad \forall z_\star^f \in \bbR, x_0 \in \bbR^n. \label{eq:stableFixedPoint}
   \end{align}
   This statement for \(z_\star^f = 0\) and  detectability of \((A,C)\) together imply that the matrix \(A + BmC\) is Schur stable. Consequently, the fixed-point equation of~\eqref{eq:quadraticAlgorithm},
   \begin{align*}
      x_\star = (A + BmC)x_\star - Bmz_\star^f,
   \end{align*}
   has a unique solution \(x_\star\) for each \(z_\star^f\). In addition, due to \eqref{eq:stableFixedPoint}, the stable fixed-point \(x_\star\) must satisfy \(C x_\star = z_\star^f\). Consequently, if \eqref{eq:unboundedError} does not hold, then \eqref{eq:equilibriumEquations} holds for all $z_\star^f$ and all $x_0$.
\end{proof}

Since \eqref{eq:unboundedError} is both undesirable and avoidable by the GD algorithm, we require that the first case of Lemma~\ref{lem:FixedPointCondition} holds for any algorithm considered in the sequel. Note that for arbitrary $z_\star^f \neq 0$, $Cx_\star = z_\star^f \neq 0$ forces $x_\star \neq 0$. So \eqref{eq:equilibriumEquations} implies $1 \in \sigma(A)$. According to \cite{scherer2023optimization}, solvability and uniqueness of~\eqref{eq:equilibriumEquations} for every $z_\star^f \in \bbR$ are equivalent to the integrator structure characterized in the next subsection.

\subsection{The Integrator Structure}
\label{sec:integratorStructure}

For detectable $(A,C)$, the condition $1 \in \sigma(A)$ derived above
forces the system matrices to have, after a suitable state-coordinate change,
the block structure
\begin{align}
   \left(
   \begin{array}{c|c}
     A & B \\
     \hline
     C & 0
   \end{array}
   \right)
   =
   \left(
   \begin{array}{cc|c}
      A_c & B_c & 0\\
      0   & 1   & 1\\
      \hline
      C_c & D_c & 0
   \end{array}
   \right),
   \quad
   \det
   \begin{pmatrix}
      A_c - I_{n_c} & B_c\\
      C_c         & D_c
   \end{pmatrix}
   \neq 0,
   \label{eq:algorithmStructure}
\end{align}
where $A_c \in \bbR^{n_c \times n_c}$, $B_c \in \bbR^{n_c \times 1}$,
$C_c \in \bbR^{1 \times n_c}$, $D_c \in \bbR$
are free parameters of the algorithm and the total state dimension is $n = n_c + 1$ (cf. \cite{scherer2023optimization} or Theorem~1 in \cite{scherer2025tutorial}).

Reading~\eqref{eq:algorithmStructure} row by row,
the system is the series interconnection of the
\emph{discrete-time integrator}
\begin{align}
   x_{t+1}^s = x_t^s + w_t, \qquad y_t = x_t^s,
   \label{eq:integratorDynamics}
\end{align}
with scalar transfer function $\frac{1}{z-1}$, and the system
\begin{align}
   \begin{pmatrix}
      x_{t+1}^c \\ z_t
   \end{pmatrix}
   =
   \begin{pmatrix}
      A_c & B_c \\ C_c & D_c
   \end{pmatrix}
   \begin{pmatrix}
      x_t^c \\ y_t
   \end{pmatrix}.
   \label{eq:controllerDynamics}
\end{align}
That \(A_c,B_c,C_c,D_c\) are to be designed suggests the interpretation of this structure as the interconnection of a plant \eqref{eq:integratorDynamics} and a controller \eqref{eq:controllerDynamics}. The combined state partitions as $x_t = \bigl((x_t^c)^\top\ x_t^s\bigr)^\top$
with controller state $x_t^c \in \bbR^{n_c}$ and
integrator state $x_t^s \in \bbR$.
The integrator accumulates the received (mini-batch) gradients $w_t$;
the controller maps the running sum $y_t = x_t^s$ to the next query point $z_t$.
The non-singularity condition in~\eqref{eq:algorithmStructure} ensures that
the pole $z=1$ of the integrator is not canceled in the series connection,
i.e., that the combined transfer function from $w_t$ to $z_t$ retains a pole
at $z = 1$.

\begin{example}[GD, HB, NAGD, TM, cf. \cite{scherer2023optimization}]
\label{ex:algorithms}
The following standard algorithms can all be written in the form
\begin{align}
   \left(
   \begin{array}{c|c}
      A & B \\
      \hline
      C & 0
   \end{array}
   \right)
   =
   \left(
   \begin{array}{cc|c}
      1+\beta & -\beta & -\alpha \\
      1       &  0     &  0 \\
      \hline
      1+\gamma & -\gamma & 0
   \end{array}
   \right), \label{eq:algorithmForm}
\end{align}
with state $x_t = (v_{t+1}, v_t)^\top$:

\begin{itemize}
   \item For \textbf{GD} pick \(\alpha = 2/(m+l)\), \(\beta = 0\), \(\gamma = 0\).

   \item For \textbf{HB} pick \(\alpha = \frac{4}{(\sqrt{l} + \sqrt{m})^2}\), \(\beta = \left(\frac{\sqrt{l} - \sqrt{m}}{\sqrt{l} + \sqrt{m}}\right)^2\), \(\gamma = 0\).

   \item For \textbf{NAGD} pick \(\alpha = \frac{4}{3l + m}\), \(\beta = \gamma = \frac{\sqrt{3l + m} - 2\sqrt{m}}{\sqrt{3l + m} + 2\sqrt{m}}\).

   \item For \textbf{TM} pick \(\alpha = \frac{\rho + 1}{l}\), \(\beta = \frac{\rho^2}{2 - \rho}\), \(\gamma = \frac{\rho^2}{(1+\rho)(2-\rho)}\), and \(\rho = 1 - \sqrt{\frac{m}{l}}\).
\end{itemize}

The state transformation $T = \begin{pmatrix} 0 & 1 \\ -1/\alpha & \beta/\alpha \end{pmatrix}$
translates~\eqref{eq:algorithmForm} into the integrator
structure~\eqref{eq:algorithmStructure} with $n_c = 1$ and parameters
\begin{align}
   A_c = \beta,
   \quad
   B_c = -\alpha,
   \quad
   C_c = \beta(1+\gamma)-\gamma,
   \quad
   D_c = -\alpha(1+\gamma).
   \label{eq:nesterovControllerParams}
\end{align}
The non-singularity condition~\eqref{eq:algorithmStructure} reads
$\det\begin{pmatrix} \beta-1 & -\alpha \\ C_c & D_c \end{pmatrix} = \alpha$,
which holds whenever $\alpha \neq 0$.
\end{example}

\subsection{Reduction to \texorpdfstring{$\scrS_{m,l}^0$}{S0(m,l)}}
\label{sec:reductionToZero}

The integrator structure does not merely guarantee a bounded asymptotic behavior for the quadratic test case: it implies that for every composite objective $(f_i)_{i=1}^N$ from $\scrS_{m,l}$, algorithm trajectories can be shifted in such a way that the critical point \(z_\star^f\) is zero.
This shift is independent of the mini-batch sequence \((\scrI_t)_{t \geq 0}\) and reduces all analysis and synthesis to functions \(f = \sum_{i=1}^N f_i\) with zero gradient at the origin.

\begin{lemma}[Reduction to $\scrS_{m,l}^0$, cf. Theorem~2 of \cite{scherer2025tutorial}]
\label{lem:reductionToZero}
For the algorithm defined by \((A,B,C)\), let an arbitrary $x_0 \in \bbR^n$ and an arbitrary composite objective $f = \frac{1}{N}\sum_{i=1}^N f_i$ with minimizer $z_\star^f$ and $(f_i)_{i=1}^N$ from $\scrS_{m,l}$ be given. Suppose $(A,B,C)$ have the integrator structure~\eqref{eq:algorithmStructure}. Define the shifted composite objective \(\tilde{f} = \frac{1}{N}\sum_{i=1}^N \tilde{f}_i\) with \(\tilde{f}_i(z) := f_i(z + z_\star^f)\). Then, there exists a unique $x_\star \in \bbR^n$ such
that, for every realization
of $(\scrI_t)_{t \geq 0}$ the following equivalence holds:
\begin{enumerate}
   \item Signals $(x_t, z_t, w_t)$  constitute a trajectory of~\eqref{eq:linearGradientAlgorithm} with oracle \(\nabla f_{\scrI_t}(z_t)\) started at $x_0$.
   \item Signals \(\tilde{x}_t := x_t - x_\star\), \(\tilde{z}_t := z_t - z_\star^f\), \(\tilde{w}_t := w_t\) constitute a trajectory of~\eqref{eq:linearGradientAlgorithm} with oracle $\nabla \tilde{f}_{\scrI_t}(\tilde{z}_t)$ started at $\tilde{x}_0 = x_0 - x_\star$.
\end{enumerate}
% In particular, the following equivalences hold for any $c > 0$ and
% $\rho \in (0,1)$:
% \begin{enumerate}
% \item\label{item:bounded}
%   $\limsup_{t\to\infty}\bbE\|z_t - z_\star^f\|^2 < \infty$ for all
%   $(f_i)\subset\scrS_{m,l}$ and all $x_0$
%   if and only if
%   $\limsup_{t\to\infty}\bbE\|z_t\|^2 < \infty$ for all
%   $(f_i)\subset\scrS_{m,l}$ so that \(\frac{1}{N}\sum_{i=1}^N f_i = f \in \scrS_{m,l}^0\) and all $x_0 \in \bbR^n$.
% \item\label{item:expConv}
%   $\bbE\|z_t - z_\star^f\|^2 \leq c\rho^{2t}\|x_0 - x_\star\|^2$ for all
%   $(f_i)\subset\scrS_{m,l}^{z_\star^f}$, all $x_0 \in \bbR^n$ and all \(z_\star^f \in \bbR\)
%   if and only if
%   $\bbE\|z_t\|^2 \leq c\rho^{2t}\|x_0\|^2$ for all
%   $(f_i)\subset\scrS_{m,l}^0$ and all $x_0 \in \bbR^n$.
% \end{enumerate}
\end{lemma}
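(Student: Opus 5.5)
The plan is to take $x_\star$ to be the equilibrium state from the first case of Lemma~\ref{lem:FixedPointCondition}, i.e., the unique solution of $(A-I_n)x_\star = 0$ and $Cx_\star = z_\star^f$. Then I check that the change of variables is a bijection between trajectories that maps each defining equation onto the other. The key observation is that $x_\star$ depends only on $(A,C)$ and $z_\star^f$. It does not depend on the sampled mini-batches. Hence a single $x_\star$ serves all realizations of $(\scrI_t)_{t\geq 0}$ at once.

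\emph{Step 1: existence and uniqueness of $x_\star$.} Using the integrator structure~\eqref{eq:algorithmStructure}, write $x_\star = (x_\star^c, x_\star^s)$. The condition $(A-I_n)x_\star = 0$ reads $(A_c - I_{n_c})x_\star^c + B_c x_\star^s = 0$; the integrator row $x^s_\star = x^s_\star$ is void. The condition $Cx_\star = z_\star^f$ reads $C_c x_\star^c + D_c x_\star^s = z_\star^f$. Together these form the linear system
\begin{align*}
   \begin{pmatrix} A_c - I_{n_c} & B_c \\ C_c & D_c \end{pmatrix}
   \begin{pmatrix} x_\star^c \\ x_\star^s \end{pmatrix}
   = \begin{pmatrix} 0 \\ z_\star^f \end{pmatrix}.
\end{align*}
Its matrix is nonsingular by~\eqref{eq:algorithmStructure}, so the system has exactly one solution.

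\emph{Step 2: equivalence of trajectories.} Fix a realization of the mini-batches. Suppose $(x_t,z_t,w_t)$ satisfies~\eqref{eq:linearGradientAlgorithm}, and define $\tilde x_t = x_t - x_\star$, $\tilde z_t = z_t - z_\star^f$, $\tilde w_t = w_t$. Then
\begin{align*}
   \tilde x_{t+1} = Ax_t + Bw_t - x_\star = A\tilde x_t + B\tilde w_t + (A - I_n)x_\star = A\tilde x_t + B\tilde w_t,
\end{align*}
and $\tilde z_t = Cx_t - Cx_\star = C\tilde x_t$. For the oracle we have $\nabla \tilde f_i(\tilde z) = \nabla f_i(\tilde z + z_\star^f)$ by the chain rule, and averaging over $\scrI_t$ gives $\nabla \tilde f_{\scrI_t}(\tilde z_t) = \nabla f_{\scrI_t}(z_t) = w_t = \tilde w_t$. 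The initial condition is $\tilde x_0 = x_0 - x_\star$ by construction. Each identity can be read in reverse, so the converse implication follows by the same computation. I would also note that $\tilde f_i \in \scrS_{m,l}$, since adding a translation changes $\tfrac{m}{2}\|z\|^2$ only by an affine term. Moreover, $\nabla\tilde f(0) = \nabla f(z_\star^f) = 0$, so $\tilde f \in \scrS^0_{m,l}$, which is the purpose of the reduction.

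\emph{Expected obstacle.} The only substantive point is Step 1: it needs the equilibrium equations to be solvable and their solution to be unique. This is exactly what the non-singularity condition in~\eqref{eq:algorithmStructure} provides. Everything else is a direct substitution, and the uniform choice of $x_\star$ across realizations is automatic because $\tilde w_t = w_t$ is left untouched by the shift.
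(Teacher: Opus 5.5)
Your proposal is correct and follows the paper's proof. You take $x_\star$ as the unique solution of $(A-I_n)x_\star=0$, $Cx_\star=z_\star^f$ and verify the sample-path correspondence by direct substitution, independently of the mini-batches; the only difference is that you derive solvability and uniqueness explicitly from the non-singularity condition in \eqref{eq:algorithmStructure}, whereas the paper simply asserts it and relies on the cited characterization.
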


Note that $\tilde{f} := \frac{1}{N}\sum_{i=1}^N\tilde{f}_i \in \scrS_{m,l}^0$,
while the individual $\tilde{f}_i$ need not satisfy $\nabla\tilde{f}_i(0) = 0$
unless Assumption~\ref{ass:irreducibleVariance} holds.

\begin{proof}
The integrator structure~\eqref{eq:algorithmStructure} implies that
system~\eqref{eq:equilibriumEquations} has a unique solution $x_\star \in \bbR^n$
for every $z_\star^f \in \bbR$.
Due to \eqref{eq:equilibriumEquations}, i.e., $(A - I_n)x_\star = 0$ and $Cx_\star = z_\star^f$, the equivalence
\begin{align*}
   &
   \tilde{x}_{t+1}
   = A\tilde{x}_t + Bw_t,
   \quad
   \tilde{z}_t = C\tilde{x}_t
   \\
   \Leftrightarrow &
   x_{t+1} = Ax_t + Bw_t,
   \quad
   z_t = Cx_t
\end{align*}
holds for \(\tilde{x}_t = x_t - x_\star\) and \(\tilde{z}_t = z_t - z_\star^f\) independent of the mini-batches \(\scrI_t\).

Similarly, the definition \(\tilde{f}_i(z) \coloneq f_i(z + z_\star^f)\) implies that \(w_t = \nabla f_{\scrI_t}(z_t)\) if and only if \(w_t = \nabla \tilde{f}_{\scrI_t}(\tilde{z}_t)\).
This establishes the sample-path correspondence.
\end{proof}

\subsection{Plant-Controller Decomposition}
\label{sec:plantControllerDecomp}

Henceforth we restrict attention to $f \in \scrS_{m,l}^0$ for the composite objective and \(f_i \in \scrS_{m,l}\) for the component functions.

The integrator structure reveals the \emph{control-theoretic} nature of the
algorithm design problem.
The system~\eqref{eq:linearGradientAlgorithm}
decomposes into three components:
\begin{enumerate}
   \item \textbf{Stochastic nonlinearity:}
         $w_t = \nabla f_{\scrI_t}(z_t)$.
   \item \textbf{Plant:}
         The discrete-time integrator~\eqref{eq:integratorDynamics}.
   \item \textbf{Controller:}
         The system~\eqref{eq:controllerDynamics} encodes the free design parameters
         $(A_c, B_c, C_c, D_c)$ of the algorithm.
\end{enumerate}
This mirrors the standard synthesis configuration in robust control:
an uncertain nonlinearity, a fixed plant, and a to-be-designed controller.
The plant in our setting is a \emph{single integrator}.
Crucially, the plant is the same for every algorithm in the
class~\eqref{eq:algorithmStructure}: it is determined entirely by the
optimization structure, not by the choice of algorithm.
The synthesis problem in Section~\ref{sec:synthesis} amounts to choosing
the controller parameters $(A_c, B_c, C_c, D_c)$ so as to guarantee a
desired convergence rate uniformly over the class $\scrS_{m,l}^0$.

\subsection{The Combined System and Auxiliary Filter}
\label{sec:combinedSystem}

For the convergence rate analysis, the algorithm~\eqref{eq:linearGradientAlgorithm}
must be augmented so that its outputs allow the application of the
stochastic Zames--Falb theorem (Theorem~\ref{thm:ZamesFalb}).
This requires constructing the filtered signal
\begin{align}
   \bar{g}_t^l = \sum_{j = 0}^s \rho^{j} \lambda_j g_{t-j}^l,
   \label{eq:filteredOutput}
\end{align}
where $g_{t-j}^l = 0$ for $t - j < 0$, with filter order $s \in \bbN_0$ and
convergence rate parameter $\rho \in (0,1]$. This signal $\bar{g}_t^l$ is a weighted, exponentially decaying combination of
past gradients
\begin{align}
   g_t^m &:= \nabla f(z_t) - m z_t,
   \label{eq:mGradient}\\
   g_t^l &:= \nabla f(z_t) - lz_t
   \label{eq:lGradient}
\end{align}
of the functions \(f^m\) and \(f^l\) defined in \eqref{eq:mlConvexity}.
The filtered signal $\bar{g}_t^l$ enables the inclusion of correlations between the current $g_t^m$ and past $g_{t-j}^l$ in the analysis, which is crucial for obtaining tight convergence rate bounds.

To realize~\eqref{eq:filteredOutput} as an LTI system, introduce the filter
state $x_t^\psi \in \bbR^s$ and the combined state
\begin{align}
   \chi_t = \begin{pmatrix}
      \rho^{-t} x_t^\top & (x_t^\psi)^\top
   \end{pmatrix}^\top \in \bbR^{n+s}.
   \label{eq:combinedState}
\end{align}
The factor $\rho^{-t}$ rescales the algorithm state so that whenever \(\chi_t\) is bounded, the original algorithm state decays at rate $\rho$; see~\cite{desoer2009feedback}.
The combined state satisfies the dynamics
\begin{align}
   \begin{pmatrix}
      \chi_{t+1}\\
      \hline
      -\rho^{-t}\bar{g}_t^l
   \end{pmatrix}
   =
   \left(
   \begin{array}{c|ccc}
      \calA & \calB_1 & \calB_2 & \calB_2\\
      \hline
      \calC & \calD_1 & \calD_2 & \calD_2
   \end{array}
   \right)
   \begin{pmatrix}
      \chi_t\\
      \hline
      \rho^{-t}g_t^m\\
      \rho^{-t}w_t^1\\
      \rho^{-t}w_t^2
   \end{pmatrix}
   \label{eq:filter}
\end{align}
with the combined system matrix
\begin{align}
   \left(
   \begin{array}{c|c:c}
      \calA & \calB_1 & \calB_2\\
      \hline
      \calC & \calD_1 & \calD_2
   \end{array}
   \right)
   =
   \left(
   \begin{array}{cc|c:c}
      \rho^{-1}(A + BmC) & 0   & \rho^{-1}B & \rho^{-1}B\\
      (l-m)B_\psi C          & A_\psi & -B_\psi       & 0 \\
      \hline
      (l-m)D_\psi C          & C_\psi & -D_\psi       & 0
   \end{array}
   \right),
   \label{eq:combinedAlgorithmAndFilter}
\end{align}
where the last block column in \eqref{eq:filter} is repeated and the \emph{filter matrices} $(A_\psi, B_\psi, C_\psi, D_\psi)$ realize the
finite impulse response (FIR) filter
\begin{align}
   \left(
      \begin{array}{c|c}
         A_\psi & B_\psi\\
         \hline
         C_\psi & D_\psi
      \end{array}
   \right)
   =
   \left(
   \begin{array}{cccc|c}
      0      & 1      & \cdots & 0 & 0\\
      \vdots & \ddots & \ddots & \vdots & \vdots\\
      0      & \cdots & 0      & 1 & 0\\
      0      & \cdots & 0      & 0 & 1\\
      \hline
      \lambda_s & \lambda_{s-1} & \cdots & \lambda_1 & \lambda_0
   \end{array}
   \right)
   \label{eq:filterRealization}
\end{align}
of order $s \in \bbN_0$.
Here, $A_\psi \in \bbR^{s \times s}$ is the companion shift matrix,
$B_\psi \in \bbR^{s \times 1}$, $C_\psi \in \bbR^{1 \times s}$, and $D_\psi = \lambda_0$
is the leading filter coefficient.

The underlying structure of~\eqref{eq:filter} is depicted in Figure~\ref{fig:signalInterconnection} and can be understood
as follows.
The inputs $g_t^m = \nabla f(z_t) - mz_t$, \(w_t^1 = \nabla f_{\scrI_t}(z_t) - \nabla f(z_t) - \nabla f_{\scrI_t}(z_\star^f)\), and \(w_t^2 = \nabla f_{\scrI_t}(z_\star^f)\) essentially correspond to the gradient decomposition \eqref{eq:batchGradComponents}. The difference is that the gradient \(g_t^m\) is shifted by the term $mz_t = mCx_t$. This is compensated by absorbing \(mC\) into the \((1,1)\) block of \eqref{eq:combinedAlgorithmAndFilter}, yielding $A + BmC$ as the effective state-update matrix.
The specific roles of the block-column groups are:
\begin{itemize}
   \item The $(1,1)$ block $\rho^{-1}(A + BmC)$ propagates the
         scaled algorithm state, absorbing the slope-shift $mz_t$ of the gradient into the state update.
   \item The lower block rows involving $(A_\psi, B_\psi, C_\psi, D_\psi)$ accumulate the past values of $g_t^m$ required for the filter~\eqref{eq:filteredOutput}.
   \item The column $\calB_2$ maps the stochastic noise terms $w_t^1$ or \(w_t^2\) into the state.
\end{itemize}

\begin{figure}
   \centering
   \tikzstyle{block} = [draw,rectangle,thick,minimum height=2em,minimum width=2em,
   shade, shading=axis, top color=lightgray, bottom color=white,
   rounded corners=.15cm]
   \begin{tikzpicture}[>=latex, every path/.style={rounded corners},
                          every node/.style={font=\small}]

        %--- Algorithm loop -------------------------------------------------
        \node[block] (nl) at (-1,0) %,opacity=0.3
          {$\begin{pmatrix}
              \nabla f(z_t)\\[0.1em]
              \nabla f_{\scrI_t}(z_t)\!-\!\nabla f(z_t)\!-\!\nabla f_{\scrI_t}(0)\\[0.1em]
              \nabla f_{\scrI_t}(0)
            \end{pmatrix}$};
        \node[block=3em, minimum width=6em, align=center] (P) at (2.3,-2)
          {$x^s_{t+1} = x^s_t\!+\!w_t$\\[0.15em]$y_t = x^s_t~~~$};
        \node[block] (K) at (-2.3,-2)
          {$\left[\!\begin{array}{c|c}
              A_c & B_c \\ \hline C_c & D_c
            \end{array}\!\right]$};
        \node[circle, draw, fill=white, inner sep=2pt]
          (sum) at (3.3,0) {$+$};

        %--- Analysis: 2x2 transform ----------------------------------------
        \node[block] (transform) at (1.4, 2.05)
          {$\!\begin{pmatrix}
              I\!   & {-}mI  \\
              {-}I\!\! & lI
            \end{pmatrix}\!$};

        %--- Analysis: filter block (between transform and rho blocks) -------
        \node[block] (filter) at (-1.0, 1.7)
          {$\!\left[\!\begin{array}{c|c}
              A_\psi & B_\psi \\ \hline C_\psi & D_\psi
            \end{array}\!\right]\!$};

        %--- rho^{-t} scaling blocks (left of filter) -----------------------
        \node[block=1em] (rho1) at (-2.8, 3.8) {$\rho^{-t}$};
        \node[block=1em] (rho2) at (-2.8, 3.1) {$\rho^{-t}$};
        \node[block=1em] (rho3) at (-2.8, 2.4) {$\rho^{-t}$};
        \node[block=1em] (rho4) at (-2.8, 1.7) {$\rho^{-t}$};

        %=== ALGORITHM LOOP CONNECTIONS ======================================
        \coordinate (ztj) at (-3.7, -0.0);
        \draw (K.west) -| (ztj);
        \draw[->] (ztj) -- (nl.west);
        \fill[black] (ztj) circle[radius=2pt];

        \coordinate (p_gt) at ([yshift= 0.5cm]nl.east);
        \coordinate (p_w1) at (nl.east);
        \coordinate (p_w2) at ([yshift=-0.5cm]nl.east);
        \coordinate (q_gt) at ([xshift= 1.3cm]p_gt);
        \coordinate (q_w1) at ([xshift= 1.4cm]p_w1);
        \coordinate (q_w2) at ([xshift= 1.5cm]p_w2);
        \fill[black] (q_gt) circle[radius=2pt];
        \fill[black] (q_w1) circle[radius=2pt];
        \fill[black] (q_w2) circle[radius=2pt];

        \draw[->] (p_gt) -| node[above]{$g_t$}   (sum.north);
        \draw[->] (p_w1) -- node[above]{$w_t^1$} (sum.west);
        \draw[->] (p_w2) -| node[below, pos = 0.25]{$w_t^2$} (sum.south);

        \draw[->] (sum.east) -| node[left, pos=0.8]{$w_t$}
          ([xshift=0.4cm]P.east) -- (P.east);
        \draw[->] (P.west) -- node[below]{$y_t$} (K.east);

        %=== ANALYSIS CHAIN CONNECTIONS ======================================

        %--- z_t → transform top input
        \draw[->] (ztj) |- node[above, pos=0.5]{$z_t$}  (2.5,0.95) |-
          ([yshift=-0.3cm]transform.east);

        %--- g_t → transform bottom input
        \draw[->] (q_gt) |- ([yshift=0.3cm]transform.east);

        %--- z_t^l (top output) → filter
        \draw[->] ([yshift=-0.3cm]transform.west) --
          node[above, pos=0.5]{$-g_t^l$} ([yshift=0.05cm]filter.east);

        %--- g_t^m (bottom output) → rho2 (horizontal at y=2.7, safe below transform)
        \draw[->] ([yshift=0.35cm]transform.west) --
          node[above, pos=0.5]{$g_t^m$} (rho3.east);

        %--- filter → rho1
        \draw[->] (filter.west) --
          node[above, pos=0.5]{$-\bar{g}_t^l$} (rho4.east);

        %--- w_t^1 bypass: up to y=2.0, then left to rho3
        \draw[->] (q_w1) -- ([yshift=2.0cm]q_w1) |-
          node[above, pos=0.7]{$w_t^1$} (rho2.east);

        %--- w_t^2 bypass: up to y=2.0, then left to rho4
        \draw[->] (q_w2) -- ([yshift=2.5cm]q_w2) |-
          node[above, pos=0.7]{$w_t^2$} (rho1.east);

        %--- rho output arrows (no target blocks)
        \draw[->] (rho1.west) -- node[above, pos=0.6]{\small$\rho^{-t}w_t^2$} +(-0.9,0);
        \draw[->] (rho2.west) -- node[above, pos=0.6]{\small$\rho^{-t}w_t^1$}       +(-0.9,0);
        \draw[->] (rho3.west) -- node[above, pos=0.6]{\small$\rho^{-t}g_t^m$}       +(-0.9,0);
        \draw[->] (rho4.west) -- node[above, pos=0.6]{\small$-\rho^{-t}\bar{g}_t^l$}       +(-0.9,0);

      \end{tikzpicture}
      \caption{
      This block diagram shows how the various signals arising in algorithm analysis emerge. The algorithm is the feedback interconnection between the integrator with state \(x_t^s\), the \emph{controller} denoted as the square bracket matrix with blocks \(A_c, B_c, C_c, D_c\) and the three components of the mini-batch gradient. The input \(z_t\) and first output \(g_t\) of the component-wise uncertainty are transformed statically to produce \(g_t^m\) and \(g_t^l\). The signal \(g_t^l\) is subsequently passed through the filter 
      denoted as the square bracket matrix with blocks \(A_\psi, B_\psi, C_\psi, D_\psi\) to produce \(\bar{g}_t^l\). Finally, the resulting inputs and outputs of the uncertainty are exponentially weighted with \(\rho^{-t}\) to arrive at the signals required for Theorem~\ref{thm:ZamesFalb}.
      }
      \label{fig:signalInterconnection}
\end{figure}

%===============================================================================
\section{Stochastic IQCs}
\label{sec:stochastiIQCs}
%===============================================================================

For the analysis and synthesis of optimization algorithms, this article relies on IQCs. In the deterministic setting, IQC families are quadratic forms that are non-negative on the graph of a troublesome system component, which is the gradient operator $\nabla f$ in the context of optimization. In the stochastic setting, the troublesome component is the mini-batch gradient operator $\nabla f_{\scrI}$ with random mini-batch $\scrI$.

IQCs enable the verification of Lyapunov functions, dissipation inequalities, and related constructs with SDP-based control theory.

The first main result is a family of stochastic IQCs capturing
the statistical properties of the components~\eqref{eq:batchGradComponents}.

\subsection{Setup and signal definitions}
\label{subsec:signalDef}

Consider a sequence of mini-batches $\scrI_0, \scrI_1, \scrI_2, \ldots$ of
uniform size $b = |\scrI_t|$, where $t$ runs from $0$ to $\tau$.
Let $(\calF_t)_{t \geq 0}$ be a filtration to which the mini-batches are
adapted, and let $\bz = (z_0, z_1, \ldots)$ be a $\calF_{t-1}$-measurable (predictable) stochastic process with bounded second moments.
The $\calF_{t-1}$-measurability of the query points is given in this articles' optimization context \cite{arjevani2023lower}: the iterate $z_t$ is computed from previous oracles before observing $\scrI_t$.

For any algorithm \eqref{eq:algorithmForm}, define the stochastic processes
$\bg = (g_0, g_1, \ldots)$ and $\bw^k = (w_0^k, w_1^k, \ldots)$ for
$k = 1,2$ via
\begin{subequations}
\label{eq:gradientSignals}
\begin{align}
   g_t   &= \nabla f(z_t),\label{eq:fullGrad}\\
   w_t^1 &= \nabla f_{\scrI_t}(z_t)
            - \nabla f(z_t)
            - \nabla f_{\scrI_t}(z_\star^f),\label{eq:noise1}\\
   w_t^2 &= \nabla f_{\scrI_t}(z_\star^f).\label{eq:noise2}
\end{align}
\end{subequations}
Note that $w_t^1$ and $w_t^2$ satisfy $\bbE[w_t^1 \mid \calF_{t-1}] = 0$ and $\bbE[w_t^2 \mid \calF_{t-1}] = 0$ (Lemma~\ref{lem:matringaleDifference}), so their products with any $\calF_{t-1}$-measurable (predictable) process have zero expectation.

\subsection{Finite-horizon IQC}

Our key result is a family of stochastic IQCs over a horizon
$t = 0, \ldots, \tau$.
Write $\bz, \bg, \bw^1, \bw^2 \in \bbR^{(\tau+1)d}$ for the stacked vectors
of the respective processes.
The IQC takes the form
\begin{align}
   &\bbE
   \left[
   \begin{pmatrix}
      \bz\\ \bg\\ \bw^1\\ \bw^2
   \end{pmatrix}^\top
   \bPi
   \begin{pmatrix}
      \bz\\ \bg\\ \bw^1\\ \bw^2
   \end{pmatrix}
   \right] \geq -\sigma_\infty^2 \bone^\top \mu, \quad \text{where}
   \label{eq:stochasticIQC}
   \\
   &
   \bPi =
   \begin{pmatrix}
      -ml(\bH^\top + \bH) & (m+l)\bH & 0 & 0\\
      (m+l) \bH^\top & -(\bH + \bH^\top) & 0 & 0\\
      0 & 0 & -2\tilde{b}\bN & 0\\
      0 & 0 & 0 & -\tilde{b}\bM
   \end{pmatrix}, \nonumber
\end{align}
where the \emph{generalized batch size} $\tilde{b}$ is
\begin{itemize}
   \item $\tilde{b} = b$, if the mini-batches $\scrI_t$ are sampled with
         replacement and
   \item $\tilde{b} = \dfrac{b(N-1)}{N-b}$, if sampled without replacement.
\end{itemize}
The irreducible variance defined in Assumption~\ref{ass:irreducibleVariance} is \(\sigma_\infty^2\).

\begin{theorem}[Finite-horizon batch gradient IQC]
\label{thm:stochasticIQCFiniteHorizon}
Let Assumption~\ref{ass:lmConvexity} hold and \(\nabla f(0) = 0\).
Let the random variables $\bz$, $\bg$, $\bw^1$, $\bw^2$ on \(\bbR^{(\tau+1)d}\) be the stacked signal vectors \eqref{eq:fullGrad}-\eqref{eq:noise2} for $t = 0, \ldots, \tau$.
Let $H \in \bbR^{(\tau+1) \times (\tau+1)}$ and
$\nu, \mu \in \bbR_{\geq 0}^{\tau+1}$ be such that the (element-wise) inequalities
\begin{align*}
   \nu \leq H \bone, \quad
   \nu \leq H^\top \bone, \quad
   0 \geq H - \diag H, \quad
   \mu \geq 0
\end{align*}
are satisfied.
Define the matrices \(\bH = H \otimes I_d\), \(\bM = \diag(\mu) \otimes I_d\), and \(\bN = \diag(\nu) \otimes I_d\). Then the quadratic constraint~\eqref{eq:stochasticIQC} holds.
\end{theorem}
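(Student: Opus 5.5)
The quadratic form in \eqref{eq:stochasticIQC} is block diagonal in $(\bz,\bg)$, $\bw^1$ and $\bw^2$; the cross terms vanish because the corresponding off-diagonal blocks of $\bPi$ are zero. I will therefore prove three separate inequalities and add them:
\begin{align*}
 &\text{(a)}\ \ \begin{pmatrix}\bz\\ \bg\end{pmatrix}^\top\begin{pmatrix}-ml(\bH^\top+\bH) & (m+l)\bH\\ (m+l)\bH^\top & -(\bH+\bH^\top)\end{pmatrix}\begin{pmatrix}\bz\\ \bg\end{pmatrix} \geq 2\sum_t \nu_t\, (g_t-mz_t)^\top(lz_t-g_t),\\
 &\text{(b)}\ \ \bbE\Bigl[\sum_t 2\tilde b\,\nu_t \|w_t^1\|^2\Bigr] \leq 2\,\bbE\Bigl[\sum_t \nu_t (g_t-mz_t)^\top(lz_t-g_t)\Bigr],\\
 &\text{(c)}\ \ \bbE\Bigl[\tilde b\sum_t \mu_t \|w_t^2\|^2\Bigr] \leq \sigma_\infty^2 \bone^\top\mu .
\end{align*}
Combining (a) and (b) cancels the $\nu$-terms, and (c) supplies the right-hand side $-\sigma_\infty^2\bone^\top\mu$.

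\emph{Step (a).} This is the deterministic Zames--Falb argument applied pathwise to $f\in\scrS^0_{m,l}$. The $(\bz,\bg)$ block equals $2\sum_{s,t} H_{st}\,(g_s - m z_s)^\top(l z_t - g_t) = 2\sum_{s,t}H_{st}\,(g^m_s)^\top(-g^l_t)$. Since $f^m$ is convex and $-f^l$ is convex, the standard interpolation inequality for $\scrS_{m,l}$, applied to the pairs $(z_s,g_s)$, $(z_t,g_t)$ and the minimizer $(0,0)$, gives
\[
(g_s^m)^\top(-g_t^l) \le (g_s^m)^\top(-g_s^l) \quad (s\neq t)
\]
after an appropriate convexity/three-point argument. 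For the off-diagonal terms the condition $H_{st}\le 0$ makes the resulting bound go in the right direction. Summing with row and column sums then leaves $\sum_t \bigl(\tfrac{(H\bone)_t + (H^\top\bone)_t}{2}\bigr)\,(\ldots)$ multiplying the nonnegative diagonal quantities $(g_t^m)^\top(-g_t^l)\ge0$. Using $\nu\le H\bone$, $\nu\le H^\top\bone$, and $(g^m_t)^\top(-g^l_t)\ge 0$, this yields (a). The precise step is the classical proof of the discrete Zames--Falb multiplier for slope-restricted gradients on $\scrS_{m,l}$, and I will cite \cite{scherer2023robust} / the doubly hyperdominant argument for it. It uses the fact that $\nabla f(0)=0$, which makes $0$ an admissible anchor point.

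\emph{Step (b), the main obstacle.} I will condition on $\calF_{t-1}$, under which $z_t$ is fixed, and compute $\bbE[\|w^1_t\|^2\mid\calF_{t-1}]$. With $\delta_i := \nabla f_i(z_t)-\nabla f_i(0)$ we have $w_t^1=\frac1b\sum_{i\in\scrI_t}\delta_i - \bar\delta$ with $\bar\delta=\frac1N\sum_i\delta_i = g_t$. The sampling variance formula then gives
\[
\bbE[\|w_t^1\|^2\mid\calF_{t-1}] = \frac{1}{\tilde b}\Bigl(\frac1N\sum_i\|\delta_i\|^2-\|\bar\delta\|^2\Bigr).
\]
This holds with $\tilde b=b$ for sampling with replacement and with the finite-population factor $\frac{N-b}{b(N-1)}$ for sampling without replacement. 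Each $f_i\in\scrS_{m,l}$ satisfies the co-coercivity-type inequality
\[
(\delta_i - m z_t)^\top(l z_t-\delta_i)\ge0, \qquad\text{i.e.}\qquad \|\delta_i\|^2\le (m+l)\delta_i^\top z_t - ml\|z_t\|^2 .
\]
Averaging over $i$ and subtracting $\|\bar\delta\|^2$ gives
\[
\frac1N\sum_i\|\delta_i\|^2-\|g_t\|^2 \le (m+l)g_t^\top z_t - ml\|z_t\|^2-\|g_t\|^2 = (g^m_t)^\top(-g^l_t).
\]
Multiplying by $\tilde b\nu_t\ge0$ and using the tower property gives (b). I also need the martingale-difference property from Lemma~\ref{lem:matringaleDifference} only implicitly; it is what makes the cross terms vanish in the variance formula.

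\emph{Step (c).} Similarly, $\bbE[\|w_t^2\|^2\mid\calF_{t-1}]$ is the sampling variance of $\nabla f_i(z^f_\star)$. The mean of these vectors is $\nabla f(0)=0$, so
\[
\bbE[\|w_t^2\|^2\mid\calF_{t-1}] = \frac1{\tilde b}\cdot\frac1N\sum_i\|\nabla f_i(0)\|^2=\sigma_\infty^2/\tilde b .
\]
Multiplying by $\tilde b\mu_t$ and summing over $t$ gives (c). Taking expectations in (a) and adding the three pieces establishes \eqref{eq:stochasticIQC}.
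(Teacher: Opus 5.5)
Your three-way split is the paper's. Step (b) is Lemma~\ref{lem:varianceBoundBatchGradient}, and your derivation of it (sampling-variance formula plus the two-point sector inequality between $z_t$ and $0$ for each $f_i$) is correct. Step (c) is the identity $\bbE\|w_t^2\|^2=\sigma_\infty^2/\tilde b$ that the paper draws from Definition~\ref{def:irreducibleVariance}.

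\textbf{The gap is in your justification of (a).} The pairwise bound $(g_s^m)^\top(-g_t^l)\le (g_s^m)^\top(-g_s^l)$ for $s\neq t$ is false. For $f(z)=\tfrac c2\|z\|^2$ with $m<c<l$ it reads $(c-m)(l-c)\,z_s^\top z_t\le (c-m)(l-c)\|z_s\|^2$, which fails for $z_t=2z_s\neq 0$. Pairwise, membership in $\scrS_{m,l}$ only gives the symmetrized inequality \eqref{eq:slopeRestrictionCondition}. That suffices for symmetric $H$, but not for the nonsymmetric (causal, lower-triangular) $H$ the theorem is used for.

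Your intermediate conclusion, with $\tfrac12\bigl((H\bone)_t+(H^\top\bone)_t\bigr)$ left on the diagonal, is false for such $H$. Take $d=1$, $m=1$, $l=3$, $H=\begin{psmallmatrix}1&-1\\0&1\end{psmallmatrix}$, $\nabla f(z)=3z$ for $z\le 1$ and $z+2$ for $z\ge 1$, and $\bz=(1,2)$. Then $2\sum_{s,t}H_{st}(g_s^m)^\top(-g_t^l)=0$, while your lower bound equals $4$. Citing the doubly hyperdominant lemma for $H$ itself does not repair this, since it only yields $\ge 0$ without the $\nu$-surplus.

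The missing idea is the one the paper's proof rests on. The hypotheses $\nu\le H\bone$, $\nu\le H^\top\bone$, $H-\diag H\le 0$ say exactly that $H-\diag(\nu)$ is doubly hyperdominant; this is why row and column sums are bounded separately rather than on average. Apply Lemma~\ref{lem:doublyHyperdominantMultipliers} pathwise to $H-\diag(\nu)$ and add back the diagonal part. This gives (a) at once, with no need for the sign of $(g_t^m)^\top(-g_t^l)$. With this replacement your proof coincides with the paper's.

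One caveat applies equally to the paper's own argument. Identifying the $(\bz,\bg)$ part of \eqref{eq:stochasticIQC} with $2(\bg-m\bz)^\top\bH(l\bz-\bg)$ is an identity only for symmetric $\bH$. With the printed cross block $(m+l)\bH$, the two forms differ by $2l\,\bz^\top(\bH-\bH^\top)\bg$. So what both arguments actually establish is the IQC whose $(\bz,\bg)$ cross block is $l\bH^\top+m\bH$.
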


The proof is given in Appendix~\ref{app:proofs}.

\subsection{Causal time-invariant IQCs}

The next result specializes Theorem~\ref{thm:stochasticIQCFiniteHorizon} to
the causal time-invariant case needed for convergence rate analysis.

\begin{theorem}[Causal time-invariant batch gradient IQC]
\label{thm:ZamesFalb}
Let Assumption~\ref{ass:lmConvexity} hold and \(\nabla f(0) = 0\).
Let $g_t^m$ and $g_t^l$ be as in~\eqref{eq:mGradient}--\eqref{eq:lGradient}.
Let $\lambda_0,\ldots,\lambda_s \in \bbR$ be a sequence with
\begin{align*}
   \lambda_t \leq 0 \text{ for } t > 0, \quad \text{and} \quad
   \sum_{t = 0}^s \rho^{-t} \lambda_t \geq 0
\end{align*}
for some given $\rho \in (0,1]$.
Then, for any \(\calF_{t-1}\)-measurable sequence $\bz = (z_t)_{t=0}^\infty$ of
$\bbR^d$-valued random variables and any finite horizon $\tau$,
\begin{align*}
   \sum_{t=0}^\tau \sum_{k=0}^s \rho^{k-2t}\lambda_k
   \bbE \big[(g_t^m)^\top g_{t-k}^l\big]
   +
   \tilde{b}\Bigl(\sum_{t = 0}^s \rho^{-t} \lambda_t\Bigr)
   \sum_{t = 0}^\tau \rho^{-2t}\bbE \|w_t^1\|^2
\end{align*}
is non-positive, where $g_{t-k}^l = 0$ for $t-k < 0$.
\end{theorem}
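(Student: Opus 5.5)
We derive the statement from Theorem~\ref{thm:stochasticIQCFiniteHorizon} by choosing a specific Toeplitz matrix $H$ and then reading off the resulting quadratic form. The plan has four steps.

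\emph{Step 1: choice of $H$.} Take $H \in \bbR^{(\tau+1)\times(\tau+1)}$ lower triangular with entries $H_{t,t-k} = \rho^{k-2t}\lambda_k$ for $0 \le k \le \min(s,t)$ and zero otherwise. This is the exponentially weighted causal Toeplitz operator: the weight $\rho^{-t}$ on $g_t^m$ times $\rho^{-(t-k)}$ on $g_{t-k}^l$ gives $\rho^{k-2t}$. Choose $\mu = 0$, so the $\bw^2$ block and the $\sigma_\infty^2$ term both vanish. This is why no $\sigma_\infty^2$ appears in the claim.

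\emph{Step 2: verifying the hypotheses.} The off-diagonal condition $H - \diag H \le 0$ follows from $\lambda_k \le 0$ for $k>0$. For $\nu$, set $\nu_t := \min\bigl((H\bone)_t,(H^\top\bone)_t\bigr)$ and check that $\nu_t \ge \rho^{-2t}\sum_{k=0}^s \rho^{-k}\lambda_k \ge 0$.
\begin{itemize}
\item Row sums: $(H\bone)_t = \rho^{-2t}\sum_{k\le\min(s,t)}\rho^{k}\lambda_k$. Truncation drops only nonpositive terms, and $\rho^k \le \rho^{-k}$ with $\lambda_k\le0$ for $k>0$ gives $\sum_k \rho^k\lambda_k \ge \sum_k\rho^{-k}\lambda_k$.
\item Column sums: $(H^\top\bone)_j = \sum_{k\le\min(s,\tau-j)} \rho^{k-2(j+k)}\lambda_k = \rho^{-2j}\sum_k \rho^{-k}\lambda_k$. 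Truncation again only removes nonpositive terms.
\end{itemize}
Hence the choice $\nu_t = \rho^{-2t}\sum_{k}\rho^{-k}\lambda_k$ is admissible. The column-sum computation is exactly where the hypothesis $\sum_k\rho^{-k}\lambda_k\ge0$ is used, and it is the main point to get right; the boundary truncations are handled by the sign pattern.

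\emph{Step 3: the quadratic form.} With these choices, Theorem~\ref{thm:stochasticIQCFiniteHorizon} gives
\[
\bbE\bigl[-ml\,2\bz^\top\bH\bz + 2(m+l)\bz^\top\bH\bg - 2\bg^\top\bH\bg\bigr] - 2\tilde b\,\bbE[(\bw^1)^\top\bN\bw^1] \ge 0 .
\]
Use the identity
\[
-(g-mz)^\top \bH (g-lz) = -\bg^\top\bH\bg + l\,\bg^\top\bH\bz + m\,\bz^\top\bH\bg - ml\,\bz^\top\bH\bz,
\]
and symmetrize each scalar quadratic form ($x^\top\bH y = y^\top\bH^\top x$). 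The first expectation then equals $-2\,\bbE[(\bg^m)^\top\bH\bg^l]$, up to the orientation convention (whether $\bH$ or $\bH^\top$ pairs $g^m$ with past $g^l$). That orientation should be fixed consistently with the paper's $\bPi$; if it is reversed, take $H$ upper triangular instead, and Step 2 is unchanged with row and column sums swapped.

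\emph{Step 4: conclusion.} Expanding, $(\bg^m)^\top\bH\bg^l = \sum_t\sum_k \rho^{k-2t}\lambda_k\,(g_t^m)^\top g_{t-k}^l$, with $g_{t-k}^l=0$ for $t-k<0$ automatically enforced by the triangular support. Likewise $(\bw^1)^\top\bN\bw^1 = \bigl(\sum_k\rho^{-k}\lambda_k\bigr)\sum_t\rho^{-2t}\|w_t^1\|^2$. Dividing the inequality by $-2$ yields the claimed non-positivity. The horizon is finite, so no limits are needed, and predictability of $\bz$ enters only through Theorem~\ref{thm:stochasticIQCFiniteHorizon}.
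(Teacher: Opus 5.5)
Your route is the paper's: encode the double sum by a causal Toeplitz-type $H$, verify the hypotheses of Theorem~\ref{thm:stochasticIQCFiniteHorizon} with $\mu=0$, and read off the claim. Steps~1, 2 and~4 are correct, and your choices $H_{t,t-k}=\rho^{k-2t}\lambda_k$ and $\nu_t=\rho^{-2t}\sum_{k}\rho^{-k}\lambda_k$ are exactly the weights the statement needs.

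The gap is in Step~3. Symmetrization is only legitimate for the diagonal blocks $\bz^\top\bH\bz$ and $\bg^\top\bH\bg$. The cross term of the displayed $\bPi$ is $2(m+l)\,\bz^\top\bH\bg$. By contrast, $-2(\bg^m)^\top\bH\bg^l$ has cross term $2\,\bz^\top(m\bH+l\bH^\top)\bg$, and $-2(\bg^l)^\top\bH\bg^m$ has cross term $2\,\bz^\top(m\bH^\top+l\bH)\bg$. These differ from the displayed one by $2l\,\bz^\top(\bH-\bH^\top)\bg$ and $2m\,\bz^\top(\bH-\bH^\top)\bg$, respectively. So for $m>0$ and non-symmetric $H$ your claimed identity is false in either orientation, and switching to an upper-triangular $H$ does not repair it.

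In fact the form with off-diagonal block $(m+l)\bH$ is not even nonnegative. Take $d=1$, $N=1$, $\tau=1$, $H=\begin{pmatrix}1&0\\-1&1\end{pmatrix}$, $\nu=\mu=0$, $\nabla f(z)=mz+(l-m)\min\{\max\{z,-1\},1\}$ with $0<m<l$, and deterministic $z_0=1$, $z_1=1+K$ with $K>0$. Then $w^1=w^2=0$, $\sigma_\infty^2=0$, and all hypotheses hold. Yet the displayed left-hand side equals $-2m(l-m)K<0$.

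The block must therefore be read as $m\bH+l\bH^\top$. That is, you should use Theorem~\ref{thm:stochasticIQCFiniteHorizon} in the bilinear form its appendix proof actually establishes, which for $\mu=0$ reads
\[\bbE\bigl[(\bg-m\bz)^\top\bH(l\bz-\bg)\bigr]\geq\tilde{b}\,\bbE\bigl[(\bw^1)^\top\bN\bw^1\bigr].\]
Invoke this directly. Since $(\bg-m\bz)^\top\bH(l\bz-\bg)=-(\bg^m)^\top\bH\bg^l$, and your lower-triangular $H$ gives $(\bg^m)^\top\bH\bg^l=\sum_{t}\sum_{k}\rho^{k-2t}\lambda_k(g_t^m)^\top g_{t-k}^l$, the claim follows at once. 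No symmetrization is needed, and there is no orientation ambiguity: lower-triangular is the right choice. The paper's own proof likewise works with this bilinear form rather than expanding $\bPi$ blockwise.
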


The proof is given in Appendix~\ref{app:proofs}.

\begin{remark}[The role of previsibility]
\label{rem:philosophyStochasticIQCs}
The assumption of \(\calF_{t-1}\)-measurable (previsible) query points $\bz$ in the IQC is essential. Indeed, for most optimization algorithms, the query point $z_t$ depends on the past mini-batches $\scrI_0, \ldots, \scrI_{t-1}$ (cf. \cite{nemirovski2009robust}), so $z_t$ is a $\calF_{t-1}$-measurable random variable. In particular, considering \(\bz\) as deterministic sequence or as independent of the mini-batches would not capture the true statistical structure of the problem. On the other hand, arbitrary random $\bz$ can be too conservative: In the latter case, the entire sequence \(\bz\) could be zero except for the worst-case realization of the entire mini-batch sequence. The IQCs would thus have to capture the worst-case behavior of the mini-batch gradient operator instead of the average behavior. The \(\calF_{t-1}\)-measurable setting strikes the right balance.
\end{remark}

%===============================================================================
\section{Convergence Rate Analysis}
\label{sec:analysis}
%===============================================================================

We present two types of convergence analysis: exponential convergence analysis
under Assumption~\ref{ass:irreducibleVariance} (interpolation regime),
and the asymptotic variance bound without Assumption~\ref{ass:irreducibleVariance}.

\subsection{Convergence rate in the interpolation regime}

The following proposition gives a sufficient condition for exponential convergence in terms of an LMI.
% The additional slack variables $Q_{13}$, $Q_{23}$ capture the stochastic independence structure.

\begin{proposition}
\label{prop:convergenceRateDynamic}
Let Assumptions~\ref{ass:lmConvexity} and~\ref{ass:irreducibleVariance} hold, let \(\nabla f(0) = 0\),
and let the combined system~\eqref{eq:combinedAlgorithmAndFilter} be as in
Section~\ref{sec:algorithmModeling}.
If there exist a matrix $P = P^\top \succ 0$ of size $(n+s) \times (n+s)$,
arbitrary matrices $Q_{13}, Q_{23} \in \bbR^{(n+s+1) \times 1}$,
and parameters $\nu, \lambda_0, \ldots, \lambda_s \in \bbR$ such that
\begin{align}
   &\begin{pmatrix}
      \calA & \calB_1 & \calB_2\\
      I     & 0       & 0
   \end{pmatrix}^\top
   \begin{pmatrix}
      P  & 0\\
      0  & -P
   \end{pmatrix}
   \begin{pmatrix}
      \calA & \calB_1 & \calB_2\\
      I     & 0       & 0
   \end{pmatrix}
   \nonumber\\
   &+
   \begin{pmatrix}
      \calC & \calD_1 & \calD_2\\
      0     & 1       & 0\\
      0     & 0       & 1
   \end{pmatrix}^\top
   \begin{pmatrix}
      0 & 1 & 0\\
      1 & 0 & 0\\
      0 & 0 & -2\tilde{b}\nu
   \end{pmatrix}
   \begin{pmatrix}
      \calC & \calD_1 & \calD_2\\
      0     & 1       & 0\\
      0     & 0       & 1
   \end{pmatrix}
   \nonumber\\
   &+
   \begin{pmatrix}
      0 & 0 & Q_{13}\\
      0 & 0 & Q_{23}\\
      Q_{13}^\top & Q_{23}^\top & 0
   \end{pmatrix}
   \prec 0,
   \label{eq:IQC_convergence}
\end{align}
$\lambda_1, \ldots, \lambda_s \leq 0$, and
$\nu = \sum_{j=0}^s \rho^{-j}\lambda_j \geq 0$ are satisfied,
then the query points of algorithm~\eqref{eq:linearGradientAlgorithm}
satisfy
\begin{align}
   \lambda_{\min}(P)\|C\|^2 \bbE \left[\|z_t\|^2\right]
   \;\leq\;
   \rho^{2t}\,
   \bbE\left[
   \begin{pmatrix} x_0 \\ 0 \end{pmatrix}^\top
   P
   \begin{pmatrix} x_0 \\ 0 \end{pmatrix}
   \right].
   \label{eq:convergence}
\end{align}
\end{proposition}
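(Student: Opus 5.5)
The proof is a standard dissipation argument. I evaluate the LMI \eqref{eq:IQC_convergence} along trajectories of the combined system \eqref{eq:filter}, sum over time, take expectations, and use Theorem~\ref{thm:ZamesFalb} to absorb the supply-rate terms.

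\textbf{Step 1: set up the signals.} Under Assumption~\ref{ass:irreducibleVariance} every $\nabla f_i$ vanishes at $z_\star^f=0$, so $w_t^2=0$. Hence $w_t=\nabla f(z_t)+w_t^1 = g_t^m + m z_t + w_t^1$, which matches the state update in \eqref{eq:combinedAlgorithmAndFilter}. Let $\xi_t=(\chi_t^\top,\rho^{-t}g_t^m,\rho^{-t}w_t^1)^\top$. Multiplying \eqref{eq:IQC_convergence} from the left and right by $\xi_t$, and using the non-strict inequality (or keeping the strict margin $\epsilon$), gives the pointwise inequality
\begin{align*}
\chi_{t+1}^\top P\chi_{t+1}-\chi_t^\top P\chi_t + 2\rho^{-2t}(-\bar g^l_t)^\top g_t^m - 2\tilde b\nu\rho^{-2t}\|w_t^1\|^2 + 2\rho^{-t}\bigl(Q_{13}^\top\chi_t + Q_{23}\rho^{-t}g_t^m\bigr)^\top w_t^1 \le 0.
\end{align*}
Here the output row $\calC\chi_t+\calD_1\rho^{-t}g^m_t+\calD_2\rho^{-t}w^1_t$ equals $-\rho^{-t}\bar g^l_t$. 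The factor $2$ in front of the cross term comes from the symmetric off-diagonal $1$'s; I need to keep careful track of the sign conventions here.

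\textbf{Step 2: remove the slack terms.} Take expectations. The quantities $\chi_t$ and $g_t^m=\nabla f(z_t)-mz_t$ are $\calF_{t-1}$-measurable, because $z_t=Cx_t$ and $x_t$ depends only on $\scrI_0,\dots,\scrI_{t-1}$. Since $\bbE[w^1_t\mid\calF_{t-1}]=0$ by the martingale-difference property, the $Q_{13},Q_{23}$ terms have zero expectation. This is exactly why these multipliers may be arbitrary.

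\textbf{Step 3: telescope and apply the IQC.} Sum from $t=0$ to $\tau-1$. The remaining terms are $-2\sum_t\rho^{-2t}\bbE[(\bar g^l_t)^\top g^m_t]-2\tilde b\nu\sum_t\rho^{-2t}\bbE\|w^1_t\|^2$. Expanding $\bar g^l_t=\sum_k\rho^k\lambda_k g^l_{t-k}$ shows that this equals $-2$ times the expression in Theorem~\ref{thm:ZamesFalb}. Since that expression is non-positive, the sum is non-negative. This gives
\[
\bbE[\chi_\tau^\top P\chi_\tau]\le \bbE[\chi_0^\top P\chi_0]=\bbE\bigl[(x_0^\top,0)P(x_0^\top,0)^\top\bigr],
\]
using that the filter state starts at zero because $g^l_{t}=0$ for $t<0$.

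\textbf{Step 4: conclude.} From $\lambda_{\min}(P)\|\chi_\tau\|^2\le\chi_\tau^\top P\chi_\tau$ and $\|\chi_\tau\|^2\ge\rho^{-2\tau}\|x_\tau\|^2$, we get $\rho^{-2\tau}\lambda_{\min}(P)\,\bbE\|x_\tau\|^2\le$ the right-hand side. Since $\|z_\tau\|\le\|C\|\|x_\tau\|$, the bound on $\bbE\|z_\tau\|^2$ follows. The constant placement of $\|C\|^2$ in \eqref{eq:convergence} appears to need $\|C\|^{-2}$ rather than $\|C\|^2$, so I would verify the intended normalization.

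\textbf{Main obstacle.} The delicate step is matching the quadratic supply term exactly with Theorem~\ref{thm:ZamesFalb}. This covers the sign of $-\bar g^l$, the $\rho^{k-2t}$ weights that arise from $\rho^{-t}\cdot\rho^{-t}\rho^{k}$, and the factor of $2$. It also requires justifying the previsibility of the filter state $x^\psi_t$, which depends on past $g^l$ only and is therefore $\calF_{t-1}$-measurable.
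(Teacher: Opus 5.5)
Your proof is correct and follows the paper's argument essentially step for step: you multiply the LMI by $(\chi_t^\top,\rho^{-t}(g_t^m)^\top,\rho^{-t}(w_t^1)^\top)$, eliminate the $Q_{13},Q_{23}$ cross terms in expectation via $\bbE[w_t^1\mid\calF_{t-1}]=0$, telescope, and drop the supply terms using Theorem~\ref{thm:ZamesFalb}. Your concern about the constant is also justified: the paper's own final step $\|z_t\|^2\le\|C\|^2\rho^{2t}\|\chi_t\|^2$ gives $\lambda_{\min}(P)\,\bbE\|z_t\|^2\le\|C\|^2\rho^{2t}\,\bbE[\chi_0^\top P\chi_0]$, i.e.\ $\|C\|^{-2}$ rather than $\|C\|^2$ on the left of \eqref{eq:convergence}, so the stated normalization appears to be a typo.
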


\begin{proof}
To ensure consistent handling of the initial conditions, set $x_0^\psi = 0$
and $g_t^l = 0$ for $t < 0$.

Recall that Assumption~\ref{ass:irreducibleVariance} means \(w_t^2 \equiv 0\). Pre- and post-multiply~\eqref{eq:IQC_convergence} by
$\begin{pmatrix} \chi_t^\top & \rho^{-t}(g_t^m)^\top & \rho^{-t}(w_t^1)^\top \end{pmatrix}$
and substitute the system equations~\eqref{eq:filter},
written compactly as
\begin{align*}
   \left(
   \begin{array}{c|cc}
      \calA & \calB_1 & \calB_2\\
      \hline
      \calC & \calD_1 & \calD_2
   \end{array}
   \right)
   \begin{pmatrix}
      \chi_t \\ \rho^{-t}g_t^m \\ \rho^{-t}w_t^1
   \end{pmatrix}
   =
   \begin{pmatrix}
      \chi_{t+1} \\ -\rho^{-t}\bar{g}_t^l
   \end{pmatrix}.
\end{align*}
This yields the dissipation inequality
\begin{align}
   0 \geq\; &\chi_{t+1}^\top P \chi_{t+1} - \chi_t^\top P \chi_t
   - 2\rho^{-2t}(\bar{g}_t^l)^\top g_t^m
   - 2\rho^{-2t}\tilde{b}\nu \|w_t^1\|^2 \nonumber\\
   &+ 2
   \begin{pmatrix}
      \chi_t\\ \rho^{-t}g_t^m
   \end{pmatrix}^\top
   \begin{pmatrix}
      Q_{13}\\ Q_{23}
   \end{pmatrix}
   \rho^{-t} w_t^1. \label{eq:convergenceRateDynamicDissipationInequality}
\end{align}
Taking expectations, the last term vanishes because $\chi_t$ and $g_t^m$
are $\calF_{t-1}$-measurable while $w_t^1$ satisfies $\bbE \left[ w_t^1 \middle| \calF_{t-1} \right] = 0$ (Lemma~\ref{lem:matringaleDifference}).
Summing over $t = 0, \ldots, \tau-1$ gives
\begin{align}
   \bbE[\chi_0^\top P \chi_0]
   \geq\;
   \bbE[\chi_\tau^\top P \chi_\tau]
   - 2\sum_{t=0}^{\tau-1}
     \rho^{-2t}\bbE\bigl[(\bar{g}_t^l)^\top g_t^m
     + \tilde{b}\nu \|w_t^1\|^2\bigr].
   \label{eq:expectedDissipation}
\end{align}
Substituting~\eqref{eq:filteredOutput} for $\bar{g}_t^l$ and invoking
Theorem~\ref{thm:ZamesFalb}, the sum on the right is non-negative and
can be dropped, yielding
$\bbE[\chi_\tau^\top P \chi_\tau] \leq \bbE[\chi_0^\top P \chi_0]$.
Using $z_t = Cx_t$ and $\|z_t\|^2 \leq \|C\|^2 \rho^{2t} \|\chi_t\|^2$
concludes the proof.
\end{proof}

% \begin{remark}
%    In the presented convergence analysis (Proposition~\ref{prop:convergenceRateDynamic}), the slack variables $Q_{13}, Q_{23}$ subsume the role of $\bL_{31}$, $\bL_{32}$, $\bL_{41}$, $\bL_{42}$ and $\bO$ in Remark~\ref{rem:stochasticIQC}, rendering the latter redundant. The cross-term $2(\chi_t^\top Q_{13} + \rho^{-t}(g_t^m)^\top Q_{23})\rho^{-t}w_t^1$ arising in the dissipation inequality \eqref{eq:convergenceRateDynamicDissipationInequality} vanishes in expectation for \emph{any} $Q_{13}, Q_{23}$, precisely because $\chi_t$ and $g_t^m$ are $\mathcal{F}_{t-1}$-measurable and $\bbE \left[ w_t^1 \middle| \calF_{t-1} \right] = 0$.
% \end{remark}

The slack variables $Q_{13}$, $Q_{23}$ can be eliminated using a projection lemma, yielding a smaller LMI.

\begin{theorem}[Convergence rate of batch gradient algorithms]
\label{thm:convergenceRateDynamic}
Let Assumptions~\ref{ass:lmConvexity} and~\ref{ass:irreducibleVariance} hold and \(\nabla f(0) = 0\).
If there exist $P = P^\top \succ 0$ of size $(n+s) \times (n+s)$ and
parameters $\nu, \lambda_0, \ldots, \lambda_s \in \bbR$ such that
\begin{subequations}
\label{eq:IQC_convergenceProj}
\begin{align}
   0 &\succ
   \begin{pmatrix}
      \calA & \calB_1\\
      I     & 0
   \end{pmatrix}^\top
   \begin{pmatrix}
      P  & 0\\
      0  & -P
   \end{pmatrix}
   \begin{pmatrix}
      \calA & \calB_1\\
      I     & 0
   \end{pmatrix}
   % \nonumber\\
   % &
   +
   \begin{pmatrix}
      \calC & \calD_1\\
      0     & 1
   \end{pmatrix}^\top
   \begin{pmatrix}
      0 & 1\\
      1 & 0
   \end{pmatrix}
   \begin{pmatrix}
      \calC & \calD_1\\
      0     & 1
   \end{pmatrix},
   \label{eq:IQC_convergenceProj1}\\
   0 &>
   \calB_2^\top P \calB_2 - 2\tilde{b}\nu,
   \label{eq:IQC_convergenceProj2}
\end{align}
\end{subequations}
$\lambda_1, \ldots, \lambda_s \leq 0$, and
$\nu = \sum_{j=0}^s \rho^{-j}\lambda_j \geq 0$ are satisfied,
then the query points $z_t$ of algorithm~\eqref{eq:linearGradientAlgorithm}
satisfy the bound~\eqref{eq:convergence}.
\end{theorem}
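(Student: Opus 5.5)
We reduce Theorem~\ref{thm:convergenceRateDynamic} to Proposition~\ref{prop:convergenceRateDynamic}. It suffices to show that if \eqref{eq:IQC_convergenceProj1} and \eqref{eq:IQC_convergenceProj2} hold for some $P \succ 0$ and $\lambda_j$, then there exist slack variables $Q_{13}, Q_{23}$ such that the full LMI \eqref{eq:IQC_convergence} holds with the same $P$, $\nu$, $\lambda_j$. The side conditions on the $\lambda_j$ and $\nu$ are identical in both statements, so the bound \eqref{eq:convergence} then follows verbatim from the proposition.

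First expand the left-hand side of \eqref{eq:IQC_convergence} as a symmetric $3\times 3$ block matrix, partitioned according to $(\chi, g^m, w^1)$. Write $M_{11}$ for the upper-left $2\times 2$ block (indices $\chi, g^m$), $M_{12}$ for the column coupling to $w^1$, and $M_{22}$ for the scalar corner.

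\begin{itemize}
\item $M_{11}$ is exactly the matrix in \eqref{eq:IQC_convergenceProj1}, because the rows and columns involving $w^1$ do not enter it.
\item $M_{22} = \calB_2^\top P \calB_2 - 2\tilde{b}\nu$. The supply-rate term contributes $\calD_2^\top \cdot 0 \cdot \calD_2$ plus the $-2\tilde b\nu$ entry, and the $-P$ block contributes nothing because the lower block row of the first factor has zero $w^1$-column. Also $\calD_2 = 0$ by \eqref{eq:combinedAlgorithmAndFilter}, which further simplifies the cross terms.
\item $M_{12}$ equals $(\calA^\top P \calB_2 + \calC^\top \calD_2;\ \calB_1^\top P\calB_2 + \calD_2) + (Q_{13}; Q_{23})$, up to the exact form of the cross terms from the middle factor.
\end{itemize}

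The key step is to choose $(Q_{13};Q_{23}) := -(\text{the cross terms})$. This choice is free because $Q_{13}, Q_{23}$ are unstructured. The resulting matrix is block diagonal, $\diag(M_{11}, M_{22})$, and it is negative definite exactly when \eqref{eq:IQC_convergenceProj1} and \eqref{eq:IQC_convergenceProj2} hold. This is an elementary instance of the projection/elimination lemma; no general elimination result is needed for the sufficiency direction. For completeness one could note the converse, since the diagonal blocks of a negative definite matrix are negative definite, so the two conditions are in fact equivalent to \eqref{eq:IQC_convergence}. Only sufficiency is needed here.

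The only real obstacle is bookkeeping: we must verify that the slack block in \eqref{eq:IQC_convergence} occupies precisely the off-diagonal positions between $(\chi, g^m)$ and $w^1$, and nowhere in the diagonal blocks. This ensures that the diagonal blocks coincide with \eqref{eq:IQC_convergenceProj1} and \eqref{eq:IQC_convergenceProj2}. In particular, we must check that the $(2,2)$ entry $1$ in the middle factor multiplies only $g^m$, so it does not alter $M_{22}$. Once this is confirmed, we invoke Proposition~\ref{prop:convergenceRateDynamic} to conclude.
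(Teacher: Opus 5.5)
Your proposal is correct and takes essentially the paper's route: the paper also eliminates $Q_{13},Q_{23}$ from \eqref{eq:IQC_convergence} via the projection lemma (Lemma~4.14 of \cite{scherer2000linear}) and then relies on Proposition~\ref{prop:convergenceRateDynamic}. Your explicit choice of $(Q_{13};Q_{23})$, which cancels the off-diagonal blocks (these reduce to $\calA^\top P\calB_2$ and $\calB_1^\top P\calB_2$ because $\calD_2=0$) and leaves the block-diagonal matrix $\diag(M_{11},M_{22})$, is simply the elementary sufficiency direction of that lemma.
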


\begin{proof}
Apply the projection lemma,
Lemma~4.14 of~\cite{scherer2000linear} to~\eqref{eq:IQC_convergence}.
\end{proof}

\subsection{Irreducible noise rejection}
\label{sec:noiseRejection}

We now drop Assumption~\ref{ass:irreducibleVariance} and analyze the impact of
irreducible noise $\sigma_\infty^2 > 0$ on the asymptotic behavior.
The quantity of interest is the time-averaged expected squared distance,
$\limsup_{\tau \to \infty} \frac{1}{\tau} \sum_{t=0}^{\tau-1} \bbE\|z_t\|^2$.

\begin{proposition}
\label{prop:asymptoticVarianceDynamic}
Let Assumption~\ref{ass:lmConvexity} hold and \(\nabla f(0) = 0\). Let
$P = P^\top \succ 0$ of size $(n+s) \times (n+s)$,
arbitrary matrices $Q_{13}, Q_{14}, Q_{23}, Q_{24}$, and parameters
$\lambda_0, \ldots, \lambda_s$, $\mu$, $\nu$, \(\sigma_z^2\) exist such that
\begin{subequations}
\begin{align}
   &
   \begin{pmatrix}
      \calA & \calB_1 & \calB_2 & \calB_2\\
      I     & 0       & 0       & 0
   \end{pmatrix}^\top
   \begin{pmatrix}
      P  & 0\\
      0  & -P
   \end{pmatrix}
   \begin{pmatrix}
      \calA & \calB_1 & \calB_2 & \calB_2\\
      I     & 0       & 0       & 0
   \end{pmatrix}
   \nonumber\\
   &+
   \begin{pmatrix}
      0                       & 0 & 0 & 1\\
      \begin{psmallmatrix} C & 0 \end{psmallmatrix} & 1 & 0 & 0
   \end{pmatrix}^\top
   \begin{pmatrix}
      -\mu & 0\\
      0    & 1
   \end{pmatrix}
   \begin{pmatrix}
      0                       & 0 & 0 & 1\\
      \begin{psmallmatrix} C & 0 \end{psmallmatrix} & 1 & 0 & 0
   \end{pmatrix}
   \nonumber\\
   &+
   \begin{pmatrix}
      \calC & \calD_1 & \calD_2 & \calD_2\\
      0     & 1       & 0       & 0\\
      0     & 0       & 1       & 0
   \end{pmatrix}^\top
   \begin{pmatrix}
      0 & 1 & 0\\
      1 & 0 & 0\\
      0 & 0 & -2\tilde{b}\nu
   \end{pmatrix}
   \begin{pmatrix}
      \calC & \calD_1 & \calD_2 & \calD_2\\
      0     & 1       & 0       & 0\\
      0     & 0       & 1       & 0
   \end{pmatrix}
   \nonumber\\
   &+
   \begin{pmatrix}
      0          & 0          & Q_{13} & Q_{14}\\
      0          & 0          & Q_{23} & Q_{24}\\
      Q_{13}^\top & Q_{23}^\top & 0      & 0\\
      Q_{14}^\top & Q_{24}^\top & 0      & 0
   \end{pmatrix}
   \prec 0,
   \label{eq:dissipation}\\
   &\sigma_\infty^2 \mu \leq \tilde{b}(l-m)^2\sigma_z^2,
   \label{eq:variance}
\end{align}
\end{subequations}
and $\lambda_1, \ldots, \lambda_s \leq 0$,
$\nu = \sum_{j=0}^s \lambda_j \geq 0$ hold.
Then, the asymptotic averaged variance of $z_t$ is bounded by $\sigma_z^2$, i.e.,
\begin{align*}
   \sigma_z^2 \geq
   \limsup_{\tau \to \infty} \frac{1}{\tau}\sum_{t=0}^{\tau-1}
   \bbE\left[\|z_t\|^2\right].
\end{align*}
\end{proposition}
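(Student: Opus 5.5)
The plan is to run the dissipation argument of Proposition~\ref{prop:convergenceRateDynamic} with $\rho = 1$, now keeping the irreducible noise channel $w_t^2$ and an additional performance channel. With $\rho = 1$ the combined state is $\chi_t = (x_t^\top, (x_t^\psi)^\top)^\top$. We set $x_0^\psi = 0$ and $g_t^l = 0$ for $t<0$. We then pre- and post-multiply \eqref{eq:dissipation} by $\begin{pmatrix}\chi_t^\top & (g_t^m)^\top & (w_t^1)^\top & (w_t^2)^\top\end{pmatrix}$ and substitute the system equations \eqref{eq:filter}. The first block becomes $\chi_{t+1}^\top P\chi_{t+1} - \chi_t^\top P\chi_t$, using $\chi_{t+1} = \calA\chi_t + \calB_1 g_t^m + \calB_2(w_t^1+w_t^2)$. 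The second block becomes $-\mu\|w_t^2\|^2 + \|e_t\|^2$ with performance output $e_t := Cx_t + g_t^m$. The third block becomes $-2(\bar g_t^l)^\top g_t^m - 2\tilde b\nu\|w_t^1\|^2$; here $\calD_2 = 0$, so $w_t^2$ does not enter the filter output. The slack block produces $2(\chi_t^\top Q_{13} + (g_t^m)^\top Q_{23})w_t^1 + 2(\chi_t^\top Q_{14} + (g_t^m)^\top Q_{24})w_t^2$.

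Next we take expectations. Both $\chi_t$ and $g_t^m$ are $\calF_{t-1}$-measurable, since $z_t = Cx_t$ is predictable. Moreover $\bbE[w_t^k\mid\calF_{t-1}] = 0$ for $k=1,2$ by Lemma~\ref{lem:matringaleDifference}. Hence all $Q$-cross terms vanish in expectation. Summing over $t = 0,\ldots,\tau-1$ gives
\begin{align*}
\bbE[\chi_\tau^\top P\chi_\tau] + \sum_{t=0}^{\tau-1}\bbE\|e_t\|^2 \le \bbE[\chi_0^\top P\chi_0] + \mu\sum_{t=0}^{\tau-1}\bbE\|w_t^2\|^2 + 2\sum_{t=0}^{\tau-1}\bbE\bigl[(\bar g_t^l)^\top g_t^m + \tilde b\nu\|w_t^1\|^2\bigr].
\end{align*}
The last sum is non-positive by Theorem~\ref{thm:ZamesFalb} with $\rho = 1$. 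That theorem only requires Assumption~\ref{ass:lmConvexity} and $\nabla f(0)=0$, and its $\nu$ matches $\sum_j\lambda_j$. We may therefore drop this sum, and we also drop $\bbE[\chi_\tau^\top P\chi_\tau]\ge 0$.

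Two estimates then remain. The first is a lower bound of $\|e_t\|^2$ by a multiple of $\|z_t\|^2$. Since $f^m$ is convex with $\nabla f^m(0)=0$, monotonicity gives $z_t^\top g_t^m \ge 0$, so $\|z_t+g_t^m\|^2 \ge \|z_t\|^2$. The second is the computation $\bbE\|w_t^2\|^2 = \bbE\|\nabla f_{\scrI_t}(0)\|^2$. Because $\frac1N\sum_i\nabla f_i(0) = 0$, this is the variance of a sample mean of $\nabla f_i(0)$: it equals $\sigma_\infty^2/b$ with replacement and $\frac{N-b}{b(N-1)}\sigma_\infty^2$ without replacement, i.e.\ $\sigma_\infty^2/\tilde b$ in both cases. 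Dividing by $\tau$ and letting $\tau\to\infty$ removes the initial term $\bbE[\chi_0^\top P\chi_0]/\tau$. This yields $\limsup_\tau \frac1\tau\sum_t\bbE\|z_t\|^2 \le \mu\sigma_\infty^2/\tilde b$ up to the normalization of the performance channel, and \eqref{eq:variance} then gives the bound by $\sigma_z^2$.

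The main obstacle is this last bookkeeping step. One has to reconcile the precise scaling of the performance row $(C\ 0,\,1,\,0,\,0)$ with the factor $(l-m)^2$ in \eqref{eq:variance}, since that factor comes from how $g^m$, $g^l$ and $z$ are combined in the filter realization \eqref{eq:combinedAlgorithmAndFilter}. I would first fix the correct lower bound $\|e_t\|^2 \ge c\|z_t\|^2$ from the $\scrS_{m,l}$ inequalities, possibly using $l$-concavity as well as $m$-convexity, and then match $c$ against \eqref{eq:variance}. I would also verify that Theorem~\ref{thm:ZamesFalb} indeed applies without Assumption~\ref{ass:irreducibleVariance}, since $w_t^1$ is defined with the offset $\nabla f_{\scrI_t}(z_\star^f)$.
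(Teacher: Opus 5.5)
\textbf{The gap is in your last step, and your proposed repair cannot close it.} Up to that step your argument coincides with the paper's. It uses the same multiplication of \eqref{eq:dissipation} by $(\chi_t^\top,(g_t^m)^\top,(w_t^1)^\top,(w_t^2)^\top)$. The $Q$-terms are removed via Lemma~\ref{lem:matringaleDifference}. It invokes Theorem~\ref{thm:ZamesFalb} with $\rho=1$, which indeed needs only Assumption~\ref{ass:lmConvexity} and $\nabla f(0)=0$, because Lemma~\ref{lem:varianceBoundBatchGradient} already handles the offset $\nabla f_{\scrI_t}(z_\star^f)$. It also uses $\bbE\|w_t^2\|^2=\sigma_\infty^2/\tilde b$.

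Your literal reading of the performance row is correct: it produces $e_t=z_t+g_t^m$. So what you have actually shown is $\limsup_\tau\frac1\tau\sum_t\bbE\|z_t\|^2\le\mu\sigma_\infty^2/\tilde b$. Condition \eqref{eq:variance} converts this only into $(l-m)^2\sigma_z^2$, not $\sigma_z^2$. You propose to find $\|z_t+g_t^m\|^2\ge c\|z_t\|^2$ with $c=(l-m)^2$, but no constant $c>1$ is possible on $\scrS_{m,l}^0$. For $\nabla f(z)=mz$ one has $g_t^m\equiv0$, so $c=1$ is sharp, and $l$-concavity cannot improve it. Your chain therefore proves the claim only when $l-m\le1$.

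\textbf{How the paper gets $(l-m)^2$.} The paper's proof asserts that the second summand of \eqref{eq:dissipation} contributes $(g_t^m-g_t^l)^\top(g_t^m-g_t^l)=(l-m)^2\|z_t\|^2$. That would be true for the row $\bigl((l-m)(C\ 0),0,0,0\bigr)$, but not for the row $\bigl((C\ 0),1,0,0\bigr)$ that is actually written. Your computation is the right one, and the paper's step does not follow.

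\textbf{The statement as written fails for $l-m>1$.} Take $m=1$, $l=3$, $b=\tilde b=1$, and gradient descent ($A=1$, $B=-\alpha$, $C=1$) with $\alpha=10^{-3}$. Choose $s=0$, $\lambda_0=\nu=0.8$, $P=1.55/\alpha$, $\mu=1.56\alpha$, $Q_{13}=Q_{14}=P\alpha(1-\alpha)$ and $Q_{23}=Q_{24}=-P\alpha^2$.
\begin{itemize}
\item With these choices, \eqref{eq:dissipation} splits into a $2\times2$ block in $(\chi_t,g_t^m)$, with entries $-2.098$, $1.052$, $-0.598$, and a $2\times2$ block in $(w_t^1,w_t^2)$. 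Both are negative definite.
\item Now take $f_i(z)=\frac12(z-a_i)^2$ with $\sum_i a_i=0$. Then $g_t^m=w_t^1=0$, and $\bbE z_t^2\to\alpha\sigma_\infty^2/(2-\alpha)\approx5.0\cdot10^{-4}\sigma_\infty^2$.
\item Meanwhile \eqref{eq:variance} admits $\sigma_z^2=\mu\sigma_\infty^2/4=3.9\cdot10^{-4}\sigma_\infty^2$, which is smaller than the true limit.
\end{itemize}

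\textbf{What your argument correctly establishes.} It proves the proposition with \eqref{eq:variance} replaced by $\sigma_\infty^2\mu\le\tilde b\,\sigma_z^2$. Alternatively, if the performance row is corrected to $\bigl((l-m)(C\ 0),0,0,0\bigr)$, your chain goes through with $\|e_t\|^2=(l-m)^2\|z_t\|^2$ exactly and reproduces the paper's intended proof. You should flag this inconsistency rather than try to prove the statement as written.
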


\begin{proof}
Set $x_0^\psi = 0$ and $g_t^l = 0$ for $t < 0$; use $\rho = 1$
in~\eqref{eq:filteredOutput}.
Pre- and post-multiply~\eqref{eq:dissipation} by
$\begin{pmatrix} \chi_t^\top & (g_t^m)^\top & (w_t^1)^\top & (w_t^2)^\top \end{pmatrix}$
and substitute the system equations to obtain the dissipation inequality
\begin{align*}
   0 \geq\; &\chi_{t+1}^\top P \chi_{t+1} - \chi_t^\top P \chi_t
   - 2(\bar{g}_t^l)^\top g_t^m
   + (g_t^m - g_t^l)^\top(g_t^m - g_t^l)\\
   &- \mu\|w_t^2\|^2 - \tilde{b}\nu \|w_t^1\|^2
   + 2
   \begin{pmatrix}
      \chi_t\\ g_t^m
   \end{pmatrix}^\top
   \begin{pmatrix}
      Q_{13} & Q_{14}\\
      Q_{23} & Q_{24}
   \end{pmatrix}
   \begin{pmatrix}
      w_t^1\\ w_t^2
   \end{pmatrix}.
\end{align*}
The last term has zero expectation (Lemma~\ref{lem:matringaleDifference}),
and $g_t^m - g_t^l = (l-m)z_t$.
Summing over $t$ and taking expectations gives
\begin{align*}
   \bbE[\chi_0^\top P \chi_0]
   &\geq\;
   (l-m)^2\sum_{t=0}^{\tau-1} \bbE\big[\|z_t\|^2\big]
   - \mu\sum_{t=0}^{\tau-1} \bbE\big[\|w_t^2\|^2\big]
   \\
   + \bbE[&\chi_\tau^\top P \chi_\tau]
   +-2 \sum_{t=0}^{\tau-1} \bbE\big[(\bar{g}_t^l)^\top g_t^m\big]
   - 2\tilde{b}\nu \sum_{t=0}^{\tau-1} \bbE \big[\|w_t^1\|^2\big]
   .
\end{align*}
Invoking Theorem~\ref{thm:ZamesFalb}, the last two sums add up to a non-negative quantity and can be dropped. Also \(\bbE[\chi_\tau^\top P \chi_\tau]\) is non-negative and can be dropped. Subsequently using the noise bound $\sum_{t=0}^{\tau-1}\bbE\|w_t^2\|^2 \leq \frac{\tau}{\tilde{b}}\sigma_\infty^2$ (from Definition~\ref{def:irreducibleVariance}) and dividing by $\tau (l-m)^2$ yields
\begin{align*}
   \frac{1}{\tau}\sum_{t=0}^{\tau-1} \bbE\|z_t\|^2
   \leq\;
   \frac{\mu \sigma_\infty^2}{(l-m)^2 \tilde{b}}+ \frac{1}{(l-m)^2 \tau}\bbE[\chi_0^\top P \chi_0]
   .
\end{align*}
Using \eqref{eq:variance} and taking $\limsup$ finishes the proof.
\end{proof}

As in the case of convergence rate analysis, the slack variables $Q_{13}, Q_{14}, Q_{23}, Q_{24}$ can be eliminated using a projection lemma, yielding a smaller LMI.

\begin{theorem}[Irreducible noise rejection]
\label{thm:asymptoticVarianceDynamic}
Let Assumption~\ref{ass:lmConvexity} hold and \(\nabla f(0) = 0\).
Assume there exist $P = P^\top \succ 0$ of size $(n+s) \times (n+s)$ and
parameters $\lambda_0, \ldots, \lambda_s$, $\mu$, $\nu$, \(\sigma_z^2\) such that
\begin{subequations}
\begin{align}
   0 \succ\;
   &
   \begin{pmatrix}
      \calA & \calB_1\\
      I     & 0
   \end{pmatrix}^\top
   \begin{pmatrix}
      P  & 0\\
      0  & -P
   \end{pmatrix}
   \begin{pmatrix}
      \calA & \calB_1\\
      I     & 0
   \end{pmatrix}
   +
   \begin{pmatrix}
      \calC & \calD_1\\
      0     & 1
   \end{pmatrix}^\top
   \begin{pmatrix}
      0 & 1\\
      1 & 0
   \end{pmatrix}
   \begin{pmatrix}
      \calC & \calD_1\\
      0     & 1
   \end{pmatrix}
   \nonumber\\
   &+
   \begin{pmatrix}
      \begin{psmallmatrix} C & 0 \end{psmallmatrix} & 1
   \end{pmatrix}^\top
   \begin{pmatrix}
      \begin{psmallmatrix} C & 0 \end{psmallmatrix} & 1
   \end{pmatrix},
   \label{eq:dissipationProj1}\\
   &
   \begin{pmatrix}
      \calB_2 & \calB_2
   \end{pmatrix}^\top
   P
   \begin{pmatrix}
      \calB_2 & \calB_2
   \end{pmatrix}
   -
   \begin{pmatrix}
      0 & 0\\
      0 & \mu
   \end{pmatrix}
   - 2\tilde{b}
   \begin{pmatrix}
      \nu & 0\\
      0 & 0
   \end{pmatrix}
   \prec 0,
   \label{eq:dissipationProj2}\\
   &\sigma_\infty^2 \mu \leq \tilde{b}(l-m)^2 \sigma_z^2,
   \label{eq:varianceProj}
\end{align}
\end{subequations}
and $\lambda_1, \ldots, \lambda_s \leq 0$,
$\nu = \sum_{j=0}^s \lambda_j \geq 0$ hold.
Then
$\sigma_z^2 \geq
\limsup_{\tau \to \infty} \frac{1}{\tau}\sum_{t=0}^{\tau-1} \bbE\left[\|z_t\|^2\right]$.
\end{theorem}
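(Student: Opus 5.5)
The plan is to reduce the theorem to Proposition~\ref{prop:asymptoticVarianceDynamic}, in the same way Theorem~\ref{thm:convergenceRateDynamic} was obtained from Proposition~\ref{prop:convergenceRateDynamic}. We show that \eqref{eq:dissipationProj1}--\eqref{eq:dissipationProj2} imply the existence of slack matrices $Q_{13},Q_{14},Q_{23},Q_{24}$ for which \eqref{eq:dissipation} holds. Condition \eqref{eq:varianceProj} is identical to \eqref{eq:variance}, and the sign conditions on $\lambda_j$ and $\nu$ are the same. Proposition~\ref{prop:asymptoticVarianceDynamic} then yields the asserted bound on $\limsup_{\tau\to\infty}\frac{1}{\tau}\sum_{t}\bbE\|z_t\|^2$.

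\textbf{Step 1: block structure of \eqref{eq:dissipation}.} Partition the variables as $\xi=(\chi,g^m)$ and $\eta=(w^1,w^2)$, and denote the left-hand side of \eqref{eq:dissipation} without the $Q$-term by $\begin{pmatrix} X_{11} & X_{12}\\ X_{12}^\top & X_{22}\end{pmatrix}$.

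The $(\xi,\xi)$ block $X_{11}$ collects three contributions. The first is the $P$-term restricted to $(\calA,\calB_1)$. The second is the multiplier term $\begin{pmatrix}\calC & \calD_1\end{pmatrix}^\top\!\begin{pmatrix}0&1\end{pmatrix}+\cdots$, i.e.\ the symmetric product involving $(\calC,\calD_1)$. The third is the term $\begin{pmatrix}(C\ 0) & 1\end{pmatrix}^\top\begin{pmatrix}(C\ 0) & 1\end{pmatrix}$, which comes from the second multiplier row, since $w^2$ does not enter that row. Hence $X_{11}$ is exactly the right-hand side of \eqref{eq:dissipationProj1}.

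For the $(\eta,\eta)$ block $X_{22}$, two facts matter. The $\calD_2$ columns of $\calC,\calD_1,\calD_2$ are zero by \eqref{eq:combinedAlgorithmAndFilter}, so the $\calD_2$ contributions vanish. The $w^1$ and $w^2$ columns both carry $\calB_2$. As a result, $X_{22}$ equals the left-hand side of \eqref{eq:dissipationProj2}, i.e.\ $\begin{pmatrix}\calB_2&\calB_2\end{pmatrix}^\top P\begin{pmatrix}\calB_2&\calB_2\end{pmatrix}-\diag(2\tilde b\nu,\mu)$.

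The off-diagonal block $X_{12}$ is some matrix, and it does not need to be computed explicitly.

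\textbf{Step 2: choice of slack variables.} The slack term adds $\begin{pmatrix}0 & Q\\ Q^\top & 0\end{pmatrix}$ with $Q=\begin{pmatrix}Q_{13}&Q_{14}\\Q_{23}&Q_{24}\end{pmatrix}$, which is completely free. Choose $Q=-X_{12}$. The total matrix is then $\diag(X_{11},X_{22})$, which is negative definite precisely when \eqref{eq:dissipationProj1} and \eqref{eq:dissipationProj2} hold. This is the elementary instance of the projection lemma (Lemma~4.14 of~\cite{scherer2000linear}), and it needs no further elimination argument. The converse direction, necessity, follows from the diagonal blocks, but only sufficiency is needed here.

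\textbf{Main obstacle.} The only delicate point is the bookkeeping in Step~1. One has to check that every term of \eqref{eq:dissipation} involving only $(\chi,g^m)$ reproduces \eqref{eq:dissipationProj1} exactly. In particular, the $\mu$-weighted row $(0,0,0,1)$ must contribute only $-\mu$ in the $(w^2,w^2)$ entry. One also has to confirm that the $\calD_2=0$ structure leaves no $(w,w)$ contributions from the IQC multiplier other than $-2\tilde b\nu$.

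Once this is verified, the conclusion follows directly from Proposition~\ref{prop:asymptoticVarianceDynamic}.
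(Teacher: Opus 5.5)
Your proposal is correct and follows the paper's route: the paper proves this theorem by eliminating the slack variables $Q_{13},Q_{14},Q_{23},Q_{24}$ from \eqref{eq:dissipation} with the projection lemma and then invoking Proposition~\ref{prop:asymptoticVarianceDynamic}. Your explicit choice $Q=-X_{12}$, after checking that the diagonal blocks of \eqref{eq:dissipation} are exactly \eqref{eq:dissipationProj1} and \eqref{eq:dissipationProj2}, is what the projection lemma reduces to here because the free slack term occupies the whole off-diagonal block; your version simply makes the cited step explicit.
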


\begin{proof}
Apply the projection lemma,
Lemma~4.14 of~\cite{scherer2000linear} to~\eqref{eq:dissipation}.
\end{proof}

%===============================================================================
\section{Algorithm Synthesis}
\label{sec:synthesis}
%===============================================================================

This section derives conditions for synthesizing algorithms that minimize
the certified convergence rate $\rho$.
Specifically, we seek controller parameters $A_c, B_c, C_c, D_c$ and a
certificate matrix $P$ that render Theorem~\ref{thm:convergenceRateDynamic}
feasible for the smallest possible $\rho$.

We focus on just one of $\rho$ and $\sigma_z^2$, because optimizing $\rho$ and $\sigma_z^2$ simultaneously yields a multi-objective problem that complicates convexification.
Moreover, minimizing $\sigma_z^2$ alone is trivial (take a sufficiently small
step size at the cost of arbitrarily slow convergence), so we optimize only
$\rho$.
For notational convenience, define the functions
$q, \bar{q} : \bbR^{s+1} \to \bbR^{s+1}$ by
\begin{align}
   \begin{pmatrix}
      \lambda_0\\ \lambda_1\\ \vdots\\ \lambda_s
   \end{pmatrix}
   \mapsto
   \begin{pmatrix}
      \bar{q}(\lambda)\\ \hline q(\lambda)
   \end{pmatrix}
   =
   \begin{pmatrix}
      \lambda_s\\
      \rho \lambda_s + \lambda_{s-1}\\
      \vdots\\
      \rho^{s-1} \lambda_s + \cdots + \lambda_1\\
      \hline
      \rho^s \lambda_s + \cdots + \rho \lambda_1 + \lambda_0
   \end{pmatrix}.
   \label{eq:qFunctions}
\end{align}

\begin{theorem}[Synthesis conditions]
\label{thm:synthesisConditions}
Let $n_c = 1 + s$.
There exist controller parameters $A_c, B_c, C_c, D_c$ and a positive
definite certificate $P \in \bbR^{(n_c + 1 + s) \times (n_c + 1 + s)}$
satisfying Theorem~\ref{thm:convergenceRateDynamic} if and only if
$\rho$ and $\lambda_0, \ldots, \lambda_s$ satisfy
\begin{subequations}
\label{eq:IQC_convergenceSynth}
\begin{align}
   0 &\succ
   \begin{pmatrix}
      I & 0\\ 0 & 1\\ 0 & \rho^{-1}\\ A_\psi & -B_\psi\\ C_\psi & -D_\psi
   \end{pmatrix}^\top
   \begin{pmatrix}
      -P_{33}  & 0       & 0       & 0 & 0\\
      0        & 0       & 0       & 0 & 1\\
      0        & 0       & P_{22}  & P_{23} & 0\\
      0        & 0       & P_{32}  & P_{33} & 0\\
      0        & 1       & 0       & 0 & 0
   \end{pmatrix}
   \begin{pmatrix}
      I & 0\\ 0 & 1\\ 0 & \rho^{-1}\\ A_\psi & -B_\psi\\ C_\psi & -D_\psi
   \end{pmatrix},
   \label{eq:synthesisCondition_a}\\
   0 &<
   \rho^2 \ovl{P}_{22} - \ovl{P}_{22}
   + \frac{2ml}{(l-m)^2} q(\lambda),
   \label{eq:synthesisCondition_b}\\
   0 &\prec
   \begin{pmatrix}
      P_{22}   & P_{23}  & q(\lambda)\\
      P_{32}   & P_{33}  & -\tfrac{m}{l-m}\bar{q}(\lambda)\\
      q(\lambda) & -\tfrac{m}{l-m}\bar{q}(\lambda) & \ovl{P}_{22}
   \end{pmatrix},
   \label{eq:synthesisCondition_c}\\
   0 &\geq \lambda_1, \ldots, \lambda_s, \qquad
   0 > \rho^{-2}P_{22} - 2\tilde{b}\sum_{j=0}^s \rho^{-j}\lambda_j,
   \label{eq:synthesisCondition_d}
\end{align}
\end{subequations}
for some
$\begin{psmallmatrix} P_{22} & P_{23}\\ P_{23}^\top & P_{33} \end{psmallmatrix}
\in \bbR^{(1+s) \times (1+s)}$
and $\ovl{P}_{22} \in \bbR$. Here, \(q(\lambda)\) and \(\bar{q}(\lambda)\) are linear functions of \(\lambda\) as defined in~\eqref{eq:qFunctions} and also \(C_\psi\) and \(D_\psi\) are linear functions of \(\lambda\) as defined in~\eqref{eq:filterRealization}.
\end{theorem}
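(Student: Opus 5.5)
The proof follows the standard convexification route for output-feedback synthesis (Scherer--Gahinet--Chilali / Masubuchi): elimination via the projection lemma combined with a partition of the certificate $P$ and its inverse. The structure of the plant, a single integrator in series with a fixed FIR filter, is what makes the resulting conditions explicit.

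\textbf{Step 1: Rewrite \eqref{eq:IQC_convergenceProj1} as a synthesis problem.} By \eqref{eq:algorithmStructure}, the matrices $(\calA,\calB_1,\calC,\calD_1)$ in \eqref{eq:combinedAlgorithmAndFilter} are affine in the controller matrix $K=\begin{psmallmatrix}A_c&B_c\\C_c&D_c\end{psmallmatrix}$. Concretely, they have the form $\calA=\calA_0+\calB_u K\calC_y$ and similar, where $\calA_0,\calB_u,\calC_y$ describe the generalized plant. That plant has state $(\rho^{-t}x^s_t,x^\psi_t)$ of dimension $1+s$, measured output $y=x^s$, and control $z$ entering through the factor $m$ (from $A+BmC$) and through $(l-m)$ (into the filter). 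After a Schur complement in $P$, \eqref{eq:IQC_convergenceProj1} becomes an LMI of the form $\Psi+U^\top K V+V^\top K^\top U\prec0$. Since $n_c=1+s$ equals the plant order (integrator plus $s$ filter states), we are in the full-order case, so no rank constraint arises.

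\textbf{Step 2: Eliminate $K$ with the projection lemma.} Applying Lemma~4.14 of~\cite{scherer2000linear} gives two conditions.
\begin{itemize}
\item The first uses the kernel of the measurement map. Since $y$ is the integrator state only, this annihilator keeps the filter states and the input $g^m$. The resulting inequality involves only the $(2,3)$-blocks of $P$, namely the block $\begin{psmallmatrix}P_{22}&P_{23}\\P_{32}&P_{33}\end{psmallmatrix}$ on the (integrator, filter) coordinates. I expect it to reduce to \eqref{eq:synthesisCondition_a}: the rows $(0,\rho^{-1})$ encode the scaled integrator update $x^s\mapsto \rho^{-1}(x^s+w)$, and $(A_\psi,-B_\psi),(C_\psi,-D_\psi)$ encode the filter.
\item The second uses the kernel of the control input map and is formulated in terms of $P^{-1}$. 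Only the scalar block $\ovl P_{22}$ (the integrator entry of the inverse) survives, because the control enters through every other coordinate.
\end{itemize}

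\textbf{Step 3: Obtain \eqref{eq:synthesisCondition_b} and \eqref{eq:synthesisCondition_c}.} The dual condition is a small LMI. Evaluating the filter response to a constant input through the integrator produces the sums $q(\lambda)$ and $\bar q(\lambda)$ of \eqref{eq:qFunctions}, and a Schur complement yields the scalar inequality \eqref{eq:synthesisCondition_b}. The coupling condition $\begin{psmallmatrix}X&I\\I&Y\end{psmallmatrix}\succ0$ between $P$ and $P^{-1}$ then becomes \eqref{eq:synthesisCondition_c}. Here the off-diagonal entries $q(\lambda)$ and $-\frac{m}{l-m}\bar q(\lambda)$ come from a congruence transformation that normalizes the integrator coordinate by the factor $(l-m)$, which is also what produces $2ml/(l-m)^2$. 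Condition \eqref{eq:IQC_convergenceProj2} needs a separate argument. The fixed plant part of $\calB_2$ gives $\rho^{-1}$ times the integrator column, so $\calB_2^\top P\calB_2=\rho^{-2}P_{22}$ after the controller state is rotated away. This gives \eqref{eq:synthesisCondition_d}, and $\lambda_j\le0$ carries over unchanged.

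\textbf{Step 4: Sufficiency.} Conversely, given a solution of the synthesis conditions, I would complete $P$ from the blocks $P_{22},P_{23},P_{33},\ovl P_{22}$ by the standard factorization. This works because the controller order matches the plant order. The projection lemma then guarantees that some $K$ exists satisfying \eqref{eq:IQC_convergenceProj1}. One must also check that $K$ can be chosen to satisfy the non-singularity condition of \eqref{eq:algorithmStructure}, which holds generically or after a perturbation, since the inequalities are strict.

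\textbf{Main obstacle.} The hardest part is handling \eqref{eq:IQC_convergenceProj2} simultaneously with the elimination. It involves $\calB_2$, which depends on the controller through $\rho^{-1}B$ in the controller rows, so it is not obvious that it reduces to a condition on $P_{22}$ alone. My first attempt would be a controller-state change of coordinates (a freedom that is available in full-order synthesis) under which the controller rows of $B$ vanish, since $B=(0,1)^\top$ in \eqref{eq:algorithmStructure}. That would make $\calB_2$ independent of $K$, and then $\calB_2^\top P\calB_2=\rho^{-2}P_{22}$ holds exactly.
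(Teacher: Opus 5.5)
\textbf{Genuine gap: the dual condition does not collapse to a scalar.} Your architecture matches the paper's: eliminate $(A_c,B_c,C_c,D_c)$ with an elimination lemma, partition $P$ and $P^{-1}$, couple the two, and read \eqref{eq:IQC_convergenceProj2} off directly. However, the second bullet of your Step~2 is false, and it hides the actual work. The controller directions span only an $(n_c+1)$-dimensional subspace of the $(n_c+2+s)$-dimensional (state, output) space. In plant terms, the scalar $z_t$ enters (integrator, filter, output) along the single vector $(m\rho^{-1},(l-m)B_\psi,(l-m)D_\psi)$. Since $B_\psi$ is the last unit vector, it does not reach ``every other coordinate''. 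So the annihilator $U_\perp$ has $1+s$ columns, and the dual condition is a $(1+s)\times(1+s)$ inequality in the \emph{whole} lower-right block $\widetilde P_\psi$ of $P^{-1}$. Likewise, the coupling condition $\begin{psmallmatrix}P_\psi & I\\ I & \widetilde P_\psi\end{psmallmatrix}\succ 0$ has size $2+2s$, while \eqref{eq:synthesisCondition_c} has size $2+s$. Your plan never explains how the filter part of $\widetilde P_\psi$ disappears, and a Schur complement alone cannot remove it.

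\textbf{What the paper does instead.} It closes this in two steps that your proposal lacks.

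(i) It applies the congruence $\ovl P_f=T^\top\widetilde P_\psi T$ with $T=\begin{psmallmatrix} q(\lambda)&0\\ -\frac{m}{l-m}\bar q(\lambda)& I\end{psmallmatrix}$. Lemma~\ref{lem:transformationVector} is precisely the statement that the cross term between the first coordinate and the filter coordinates then vanishes. This congruence also produces the entries $q(\lambda)$ and $-\frac{m}{l-m}\bar q(\lambda)$ in \eqref{eq:synthesisCondition_c}. Note that $\ovl P_{22}$ is the quadratic form of $\widetilde P_\psi$ at the first column of $T$, not the integrator entry of $P^{-1}$. After (i), \eqref{eq:synthesisCondition_b} and \eqref{eq:synthesisCondition_c} are principal submatrices of the transformed conditions, which gives necessity.

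(ii) For sufficiency, you must still supply the blocks $\ovl P_{23},\ovl P_{33}$ that \eqref{eq:synthesisCondition_b}--\eqref{eq:synthesisCondition_c} do not contain. The paper takes $\ovl P_{33}=\eta X_\star$ from a Lyapunov equation for $A_\psi-B_\psi D_\psi^{-1}C_\psi$ and lets $\eta\to\infty$. Then only the $(1,1)$ entry of the transformed dual condition and the upper-left $3\times3$ block of the transformed coupling condition matter. This requires $A_\psi-B_\psi D_\psi^{-1}C_\psi$ to be Schur (Lemma~\ref{lem:ZamesFalbSchurDueToPostiveRealness}). That in turn needs $\lambda_j\le 0$ \emph{and} the strict inequality $\sum_j\rho^{-j}\lambda_j>0$, which you must extract from \eqref{eq:synthesisCondition_d} together with $P_{22}>0$. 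Without this stability and large-$\eta$ argument, the ``if'' direction of the equivalence is not established.

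\textbf{Your ``main obstacle'' is not one.} In \eqref{eq:algorithmStructure} the controller rows of $B$ are already zero. Hence $\calB_2=(0,\rho^{-1},0)^\top$ is controller-independent, and $\calB_2^\top P\calB_2=\rho^{-2}P_{22}$ holds with no change of coordinates.
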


The proof is given in Appendix~\ref{app:proofs}.
The synthesis result enables a bisection search over $\rho$: for each
candidate $\rho$, feasibility of~\eqref{eq:IQC_convergenceSynth} is a convex
SDP.
With the optimal $\rho$ and corresponding $\lambda_0, \ldots, \lambda_s$,
the controller parameters $A_c, B_c, C_c, D_c$ are extracted by solving a
passivity design problem derived from~\eqref{eq:IQC_convergenceProj1}.

%===============================================================================
\section{Numerical Experiments}
\label{sec:numerics}
%===============================================================================

\subsection{Experimental setup}

Bisection determines the infimal rate \(\rho\) for which the synthesis conditions~\eqref{eq:IQC_convergenceSynth} are feasible.
For each candidate $\rho$, feasibility of the semidefinite
constraints~\eqref{eq:synthesisCondition_a}--\eqref{eq:synthesisCondition_d}
is checked using the MOSEK solver via CVXPY.
Bisection terminates when the interval width falls below $10^{-4}$. Throughout, we assume \(\tilde{b} = b\), which corresponds to sampling mini-batches with replacement, because this setting yields results that are independent of \(N\). For large \(N\), \(\tilde{b} \approx b\) is also a good approximation for sampling without replacement.

We use filter order $s = 1$, corresponding to a first-order FIR filter with $\lambda_0 = 1$ (fixed) and $\lambda_1 \leq 0$ (optimized). Numerical experiments with $s > 1$ did not improve convergence rates, suggesting that $s = 1$ is sufficient to capture the essential stochastic gradient structure.

We sweep condition numbers $\kappa = l/m$ from approximately $1.1$ to $10^4$, with $l = 1$ fixed and $m = 1/\kappa$ varying. For each $\kappa$, synthesis is performed for batch sizes $b \in \{1, 5, 25, 125\}$.

\subsection{Iteration complexity and evaluation complexity}

Algorithm speed is typically measured either per iteration \(t\) or per component gradient evaluation \(k = tb\); under Assumptions~\ref{ass:lmConvexity} and \ref{ass:irreducibleVariance}, optimal convergence is exponential in both cases.

In the per-iteration case, the fastest rate \(\rho\) such that
\begin{align*}
   \frac{\bbE\left[ \|z_t - z_\star^f\|^2 \right]}{\|z_0 - z_\star^f\|^2} \leq c \rho^{2t}
\end{align*}
holds for all \(z_0\), \(t\), \(N\) and a universal \(c\) is $\rho_\mathrm{TM} = 1 - \sqrt{m/l}$, attained by TM with full gradients ($b = N$) and proven optimal in~\cite{drori2022oracle}.

For the per-evaluation case, no optimal rate is known to the authors. To the best of the authors' knowledge (cf.~\cite{kidambi2018insufficiency}), the rate $\rho_\mathrm{GD} = (l-m)/(l+m)$ of GD with \(b = 1\) remains a candidate for the optimal \(\rho\) satisfying
\begin{align*}
   \frac{\bbE\left[ \|z_k - z_\star^f\|^2 \right]}{\|z_0 - z_\star^f\|^2} \leq c \rho^{2k}
\end{align*}
for fixed \(c\) and arbitrary \(z_0\), \(k\) and \(N\). Here, \(z_k\) is the query point after \(k\) component gradient evaluations.

The per-evaluation rate translates to the per-iteration rate \(\rho_\mathrm{GD}^b = ((l-m)/(l+m))^b\) via the identity \(k = tb\). If \(\rho_\mathrm{GD}\) is optimal per gradient evaluation, then $\rho_\mathrm{LB} = \max\{\rho_\mathrm{TM}, \rho_\mathrm{GD}^b\}$ is a combined lower bound on the per-iteration rate of any algorithm.
For $b = 1$ the GD bound is always more restrictive; for large $b$, the TM rate becomes dominant.

% Let \(t\) be the number of iterations and \(k = tb\) be the number of component gradient evaluations for the objective \eqref{eq:compositeObjective}. Convergence speed is typically understood as 
% \begin{align*}
%    \frac{\bbE\left[ \|z_t - z_\star^f\|^2 \right]}{\|z_0 - z_\star^f\|^2}.
% \end{align*}
% for a fixed number of iterations \(t\) or equivalently a fixed number of component gradient evaluations \(k\).

% Under Assumptions~\ref{ass:lmConvexity} and \ref{ass:stochasticGradients}, the Triple Momentum algorithm with full gradients 

% Two reference rates serve as benchmarks:
% \begin{itemize}
%    \item \textbf{Triple Momentum (TM):}
%          $\rho_\mathrm{TM} = 1 - \sqrt{m/l}$ is the fastest convergence rate
%          for gradient methods on $\scrS_{m,l}$ according
%          to~\cite{drori2022oracle}.
%    \item \textbf{Batch GD bound:}
%          $\rho_\mathrm{GD}^b = ((l-m)/(l+m))^b$ is motivated by~\cite{kidambi2018insufficiency}, which shows that momentum does not improve the SGD rate for $b = 1$. If this result generalizes to arbitrary algorithm structure, then GD is optimal for \(b = 1\). In this case, exceeding the rate of \(b\) sequential GD steps for \(b > 1\) would imply that the location of the optimum can be inferred more accurately from the mini-batch gradient than from \(b\) individual stochastic gradients, which is not possible.
% \end{itemize}

\begin{remark}
\label{rem:illConditionedRegimes}
For large $\kappa$ we obtain
\begin{align*}
   \rho_\mathrm{TM} = 1 - \kappa^{-1/2},
   \qquad
   \rho_\mathrm{GD}^b \approx 1 - 2b/\kappa.
\end{align*}
Hence, for fixed $b$ and large $\kappa$, the TM rate is smaller (better) than the lower bound on the per-iteration rate inferred from GD, which means that the TM bound may not be achievable for large $\kappa$ and small $b$.
\end{remark}

\subsection{Synthesis results}

Figure~\ref{fig:synthesisRates} shows the synthesized convergence rates $\rho$ plotted against the condition number $\kappa$ for different batch sizes. For $b = 1$, the synthesized rate equals $\rho_\mathrm{GD}$ for all $\kappa$.
As $b$ increases, the rates approach the deterministic TM rate, with the gap shrinking significantly for $b = 25$ and $b = 125$.

\begin{figure}[t]
   \centering
   \input{synthesis_convergence_rates.pgf}
   \caption{Synthesized convergence rates $\rho$ for stochastic gradient
            algorithms vs.\ condition number
            $\kappa = l/m$. The solid black line is the TM rate. The dashed black line, which is occluded by the $b = 1$ curve, is the GD rate. Curves for $b > 1$ approach the TM rate as $\kappa$ increases.}
   \label{fig:synthesisRates}
\end{figure}

Figure~\ref{fig:parallelismCost} shows the ratio
$\rho_\mathrm{synthesis}/\rho_\mathrm{GD}^b$ for $\kappa \geq 50$, quantifying the suboptimality of one parallel iteration relative to $b$ sequential gradient steps. This ratio always exceeds~$1$, consistent with the hypothetical lower bound, and approaches~$1$ for large $\kappa$. For example, at $\kappa = 10^4$, the ratios are approximately $1.000$ for $b = 5$, $1.003$ for $b = 25$, and $1.023$ for $b = 125$.

Together, Figures~\ref{fig:synthesisRates} and~\ref{fig:parallelismCost} reveal a tradeoff: for \(b = 1\), the synthesized algorithm reduces to gradient descent; as \(b\) grows, the per-evaluation rate degrades while the per-iteration rate approaches TM. This raises the question whether \(b > 1\) is beneficial, given that GD with \(b = 1\) has a superior per gradient evaluation rate? The canonical answer argues with parallelization: GD with \(b = 1\) requires sequential gradient evaluations, whereas \(b > 1\) admits \(b\) gradient evaluations in parallel. Viewed through this lens, Figure~\ref{fig:parallelismCost} makes a strong case for the synthesized algorithms: provided \(b \ll \kappa\), the synthesized algorithms enable \(b\) parallel gradient evaluations with a negligible loss in convergence speed.

The next subsection concerning gradient noise rejection provides another argument for algorithms with \(b > 1\).

\begin{figure}[t]
   \centering
   \input{synthesis_parallelism_cost.pgf}
   \caption{Cost of parallelism: ratio $\rho_\mathrm{synthesis}/\rho_\mathrm{GD}^b$
            vs.\ condition number $\kappa$ for $\kappa \geq 50$.
            A ratio of $1$ (horizontal reference line) corresponds to perfect
            parallelism efficiency.}
   \label{fig:parallelismCost}
\end{figure}

\subsection{Noise rejection analysis}

To illustrate Theorem~\ref{thm:asymptoticVarianceDynamic}, we compute asymptotic variance bounds for GD, NAGD, TM, (all with deterministic-optimal parameters) and the synthesized algorithm. The batch size is $b = 5$. For each $\kappa$, the theorem's SDP yields the tightest certified bound $\sigma_z^2$ on $\limsup \frac{1}{\tau}\sum_{t=0}^{\tau-1}\bbE\|z_t\|^2$ with $\sigma_\infty^2 = 1$.

Figure~\ref{fig:noiseRejection} shows that the variance bound for GD grows linearly with $\kappa$, remaining finite throughout, while the bounds for NAGD and TM grow rapidly and become infinite at a finite $\kappa$, indicating instability consistent with Fig.~\ref{fig:asymptoticVariance}.
The synthesized algorithm exhibits a smaller (better) variance bound than the other methods, despite not being explicitly optimized for noise rejection.
The slight irregularity of the synthesized curve arises because the solver may extract algorithms with varying noise rejection properties from the same rate certificate.

Note that the variance bound for GD with \(b = 1\) would be larger than the GD curve displayed in Figure~\ref{fig:noiseRejection}. Hence, synthesized methods with \(b > 1\) achieve not only a faster per-iteration convergence rate but also stronger noise rejection than GD with \(b = 1\).

\begin{figure}[t]
   \centering
   \input{noise_rejection_variance.pgf}
   \caption{Variance bound $\sigma_z^2$ for GD, NAGD, TM, and the synthesized
            algorithm with batch size $b = 5$ and $\sigma_\infty^2 = 1$.
            Curves terminating before $\kappa_{\max}$ indicate instability.
            GD remains stable for all $\kappa$.}
   \label{fig:noiseRejection}
\end{figure}

%===============================================================================
\section{Conclusions}
\label{sec:conclusions}
%===============================================================================

This article develops a stochastic IQC framework for the analysis and synthesis of momentum-based gradient methods.
The key insight is that mini-batch gradient uncertainty can be captured by a generalization of Zames--Falb multipliers to the stochastic setting, and the resulting constraints admit a convex semidefinite programming formulation for both analysis and synthesis.

Numerically, the synthesized algorithms demonstrate two asymptotic behaviors: (i) as $b \to \infty$, the convergence rate approaches the optimal deterministic TM rate; (ii) as $\kappa \to \infty$, the algorithm efficiently utilizes parallel gradient information, approaching the rate of $b$ sequential stochastic GD steps from above.

Future directions include the joint synthesis of convergence rate and asymptotic variance, and the extension to non-strongly-convex objectives ($m = 0$).
%===============================================================================
\section*{Acknowledgments}
%===============================================================================

The authors thank Andrei Kharitenko and Naum Dimitrieski for valuable
technical discussions.

%===============================================================================
\appendix
%===============================================================================

\subsection{Auxiliary lemmas}
\label{app:auxiliaryLemmas}

The first auxiliary lemma relies on characterizing the inclusion of functions in \(\scrS_{m,l}\) through the next inequality. Specifically, \(f\) belongs to \(\scrS_{m,l}\) if and only if \(f^m = f(\cdot) - \frac{m}{2}\|\cdot\|^2\) and \(f^l = f(\cdot) - \frac{l}{2}\|\cdot\|^2\) satisfy for all \(z_1, z_2 \in \bbR^d\) that
\begin{align}
   \label{eq:slopeRestrictionCondition}
   (\nabla f^m(z_1) - \nabla f^m(z_2))^\top (\nabla f^l(z_1) - \nabla f^l(z_2)) \leq 0.
\end{align}
This inequality is the sum of (15) in \cite{scherer2023robust} switching \(u\) and \(y\).

% Using \eqref{eq:slopeRestrictionCondition}, we first prove the following preliminary result.

\begin{lemma}
   \label{lem:varianceBoundOne}
   Suppose Assumption~\ref{ass:lmConvexity} holds. Then, for any \(z \in \bbR^d\),
   \begin{align}
      -\nabla f^m(z)^\top \nabla f^l(z) \geq\, \frac{1}{N} \sum_{i=1}^N\, \| \nabla f_i(z) - \nabla f_i(z_\star^f)\|^2 - \| \nabla f(z)\|^2.
   \end{align}
\end{lemma}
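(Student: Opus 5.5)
The plan is to apply the slope-restriction inequality~\eqref{eq:slopeRestrictionCondition} to each component $f_i$ separately, with the two points $z_1 = z$ and $z_2 = z_\star^f$. Then I average over $i$ and regroup the terms.

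\textbf{Step 1: apply the inequality per component.} Write $\nabla f_i^m(z) = \nabla f_i(z) - mz$ and $\nabla f_i^l(z) = \nabla f_i(z) - lz$, and set $z_\star := z_\star^f$. By Assumption~\ref{ass:lmConvexity}, each $f_i \in \scrS_{m,l}$. Introduce $\delta_i := \nabla f_i(z) - \nabla f_i(z_\star)$ and $e := z - z_\star$. Inequality~\eqref{eq:slopeRestrictionCondition} then gives
\begin{align*}
   (\delta_i - m e)^\top(\delta_i - l e) \leq 0,
\end{align*}
which expands to
\begin{align*}
   \|\delta_i\|^2 - (m+l)\,\delta_i^\top e + ml\|e\|^2 \leq 0 .
\end{align*}

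\textbf{Step 2: average over $i$.} Since $\nabla f(z_\star) = 0$, the average of the $\delta_i$ is $\frac1N\sum_i \delta_i = \nabla f(z)$. Averaging the inequality from Step 1 therefore gives
\begin{align*}
   \frac1N\sum_{i=1}^N\|\delta_i\|^2 \leq (m+l)\nabla f(z)^\top e - ml\|e\|^2 .
\end{align*}

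\textbf{Step 3: identify the left-hand side of the lemma.} Expanding the product directly,
\begin{align*}
   -\nabla f^m(z)^\top\nabla f^l(z) = -\|\nabla f(z)\|^2 + (m+l)\nabla f(z)^\top z - ml\|z\|^2 .
\end{align*}
Adding $\|\nabla f(z)\|^2$ to both sides of the bound in Step 2 yields exactly the claim, provided $e = z$.

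\textbf{Main obstacle: the role of $z_\star$.} The expression $\nabla f^m(z) = \nabla f(z) - mz$ is written relative to the origin, not relative to $z_\star^f$. So I need $z_\star^f = 0$, which is the standing reduction: Section~\ref{sec:plantControllerDecomp} restricts to $f \in \scrS^0_{m,l}$, and Lemma~\ref{lem:reductionToZero} justifies this. Hence I would invoke $\nabla f(0) = 0$, so that $z_\star^f = 0$ by strong convexity and uniqueness of the minimizer. With $e = z$, Steps 1 to 3 close the argument, and the remaining work is just the expansion of the quadratic forms.
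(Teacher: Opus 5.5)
Your proposal is correct and follows the paper's proof essentially step for step. Like the paper, you apply the incremental sector inequality \eqref{eq:slopeRestrictionCondition} to each $f_i$ at $z$ and $z_\star^f$, add $\|\nabla f_i(z)-\nabla f_i(z_\star^f)\|^2$ so the remaining terms are linear in the gradient difference, average using $\frac{1}{N}\sum_i(\nabla f_i(z)-\nabla f_i(z_\star^f))=\nabla f(z)$, and use $z_\star^f=0$ from the standing restriction to $\scrS_{m,l}^0$; the paper's proof uses that last fact only implicitly, and without it the inequality fails (e.g.\ $N=1$, $f_1(z)=\frac{m}{2}(z-a)^2$ with $a\neq 0$, evaluated at $z=a$).
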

\begin{proof}
   Pick any \(z \in \bbR^d\). The incremental sector condition \eqref{eq:slopeRestrictionCondition} for each \(f_i\) implies
   \begin{align*}
      0 &\leq -(\nabla f_i^m(z) - \nabla f_i^m(z_\star^f))^\top ( \nabla f_i^l(z) - \nabla f_i^l(z_\star^f) )\\
      &=
      % \begin{pmatrix}
      %    z - z_\star^f\\
      %    \nabla f_i(z) - \nabla f_i(z_\star^f)
      % \end{pmatrix}^\top
      \begin{pmatrix}
         \bullet
      \end{pmatrix}^\top
      \begin{pmatrix}
         -ml I_d & \frac{l+m}{2} I_d\\
         \frac{l+m}{2} I_d & -I_d
      \end{pmatrix}
      \begin{pmatrix}
         z - z_\star^f\\
         \nabla f_i(z) - \nabla f_i(z_\star^f)
      \end{pmatrix}.
   \end{align*}
   Next, add \(\| \nabla f_i(z) - \nabla f_i(z_\star^f)\|^2 - \| \nabla f(z)\|^2\) from both sides and obtain
   \begin{align*}
      % \begin{pmatrix}
      %    z - z_\star^f\\
      %    \nabla f_i(z) - \nabla f_i(z_\star^f)
      % \end{pmatrix}^\top
      \begin{pmatrix}
         \bullet
      \end{pmatrix}^\top
      \begin{pmatrix}
         -ml I_d & \frac{l+m}{2} I_d\\
         \frac{l+m}{2} I_d & 0
      \end{pmatrix}
      \begin{pmatrix}
         z - z_\star^f\\
         \nabla f_i(z) - \nabla f_i(z_\star^f)
      \end{pmatrix} - \| \nabla f(z)\|^2\\
      \geq \| \nabla f_i(z) - \nabla f_i(z_\star^f)\|^2 - \| \nabla f(z)\|^2.
   \end{align*}
   Finally, summing over \(i\) and dividing by \(N\) yields the desired result.
\end{proof}

\begin{lemma}
	\label{lem:varianceBoundBatchGradient}
	Suppose Assumption~\ref{ass:lmConvexity} holds. Let for all \(t = 0,\ldots,\tau\) the query point \(z_t\) be an arbitrary random variable on \(\bbR^d\) which is \(\calF_{t-1}\)-measurable. Then,
	\begin{align*}
		\bbE \left(\left\| w_t^1 \right\|^2 \middle| \calF_{t-1} \right) \leq -\frac{1}{\tilde{b}} (g_t^m)^\top g_t^l.
	\end{align*}
	holds, where \(\tilde{b}\) is the \emph{generalized batch size} given by
	\begin{itemize}
		\item \(b\) if the sets \(\scrI_t\) are sampled with replacement, and
		\item \(\frac{b(N-1)}{N-b}\) if the sets \(\scrI_t\) are sampled without replacement.
	\end{itemize}
\end{lemma}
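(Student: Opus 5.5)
The plan is to condition on $\calF_{t-1}$, so that $z_t$ becomes a fixed point. The mini-batch $\scrI_t$ is sampled independently of the past and is therefore independent of $\calF_{t-1}$. The claim then reduces to a classical finite-population variance computation combined with Lemma~\ref{lem:varianceBoundOne}.

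\textbf{Step 1: write $w_t^1$ as a centered sample mean.} Fix $z = z_t$ and set $h_i := \nabla f_i(z) - \nabla f_i(z_\star^f)$ for $i = 1,\ldots,N$. Let $\bar h := \frac{1}{N}\sum_{i=1}^N h_i$. Because $\nabla f(z_\star^f) = 0$, we have $\bar h = \nabla f(z)$. Definition~\eqref{eq:noise1} then gives
\begin{align*}
   w_t^1 = \frac{1}{b}\sum_{i \in \scrI_t} (h_i - \bar h),
\end{align*}
where the sum is taken with multiplicity in the with-replacement case. Thus $w_t^1$ is the sample mean of the centered population $(h_i - \bar h)_{i=1}^N$, whose population average is zero.

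\textbf{Step 2: compute the conditional second moment.} Let $S^2 := \frac{1}{N}\sum_{i=1}^N \|h_i - \bar h\|^2$, which equals $\frac{1}{N}\sum_{i=1}^N\|h_i\|^2 - \|\bar h\|^2$.
\begin{itemize}
\item \emph{With replacement.} The $b$ draws are i.i.d.\ and each has mean zero. Hence $\bbE[\|w_t^1\|^2 \mid \calF_{t-1}] = S^2/b$.
\item \emph{Without replacement.} Write $w_t^1 = \frac{1}{b}\sum_{i=1}^N \mathbbm{1}_{i \in \scrI_t}(h_i - \bar h)$ and expand the square. Use $P(i \in \scrI_t) = b/N$ and, for $i \neq j$, $P(i,j \in \scrI_t) = \frac{b(b-1)}{N(N-1)}$. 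Since $\sum_i (h_i - \bar h) = 0$, the off-diagonal sum equals $-\sum_i \|h_i - \bar h\|^2$. Collecting terms gives
\begin{align*}
   \bbE[\|w_t^1\|^2 \mid \calF_{t-1}] = \frac{N-b}{b(N-1)}\, S^2 = \frac{S^2}{\tilde b}.
\end{align*}
\end{itemize}
This bookkeeping with the inclusion indicators is the only genuinely computational step and is where I expect the main effort.

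\textbf{Step 3: bound $S^2$ with Lemma~\ref{lem:varianceBoundOne}.} That lemma states exactly
\begin{align*}
   S^2 = \frac{1}{N}\sum_{i=1}^N\|\nabla f_i(z) - \nabla f_i(z_\star^f)\|^2 - \|\nabla f(z)\|^2 \leq -\nabla f^m(z)^\top \nabla f^l(z).
\end{align*}
Since $z_\star^f = 0$, we have $\nabla f^m(z_t) = g_t^m$ and $\nabla f^l(z_t) = g_t^l$, so the right-hand side is $-(g_t^m)^\top g_t^l$.

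\textbf{Step 4: conclude.} Combining Steps~2 and~3 yields $\bbE[\|w_t^1\|^2 \mid \calF_{t-1}] \leq -\frac{1}{\tilde b}(g_t^m)^\top g_t^l$. The conditioning is justified rigorously by the freezing lemma: $z_t$ is $\calF_{t-1}$-measurable and $\scrI_t$ is independent of $\calF_{t-1}$, so the conditional expectation equals the fixed-$z$ computation evaluated at $z = z_t$.
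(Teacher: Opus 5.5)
Your proposal is correct and follows the paper's proof essentially step for step. You condition on $\calF_{t-1}$, write $w_t^1$ as a centered sample mean whose conditional second moment is $S^2/\tilde b$, and bound $S^2$ by $-(g_t^m)^\top g_t^l$ via Lemma~\ref{lem:varianceBoundOne}, exactly as the paper does. The differences are that you derive the with- and without-replacement variance formulas explicitly (the paper cites the sampling literature), and that you keep the factor $1/\tilde b$ on the whole centered variance $\frac{1}{N}\sum_i\|h_i\|^2 - \|\nabla f(z_t)\|^2$, which the paper's display writes without the needed parentheses.
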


\begin{proof}
   By \(\calF_{t-1}\) measurability of \(z_t\), the mini-batch \(\scrI_t\) is independent from \(z_t\). Hence, recalling that \(w_t^1\) corresponds to the sample average
   \begin{align*}
      w_t^1 =  \frac{1}{b}\sum_{i\in \scrI_t} (\nabla f_i(z_t) - \nabla f_i(z_\star^f) - \nabla f(z_t)),
   \end{align*}
   with \(\bbE[w_t^1 | \calF_{t-1}] = 0\), the conditional expectation of \(\|w_t^1\|^2\) is given by
   \begin{align*}
      \bbE\left[ \|w_t^1\|^2 \middle| \calF_{t-1} \right] &= \frac{1}{\tilde{b}} \frac{1}{N} \sum_{i=1}^N \left\| \nabla f_i(z_t) - \nabla f_i(z_\star^f) - \nabla f(z_t) \right\|^2\\
      &= \frac{1}{\tilde{b}} \frac{1}{N} \sum_{i=1}^N \left\| \nabla f_i(z_t) - \nabla f_i(z_\star^f) \right\|^2  - \|\nabla f(z_t)\|^2,
   \end{align*}
   where \(\tilde{b}\) is determined by the sampling method according to \cite{lohr2021sampling}.
   The right-hand side is upper bounded by \(-\frac{1}{\tilde{b}} (g_t^m)^\top g_t^l\) by Lemma~\ref{lem:varianceBoundOne}, which completes the proof.
\end{proof}

\begin{lemma}[Zames-Falb multipliers]
    \label{lem:doublyHyperdominantMultipliers}
    Let \(f \in \scrS_{m,l}^0\).
    Then, for any doubly hyperdominant matrix \(\widetilde{H} \in \bbR^{\tau + 1 \times \tau + 1}\), i.e., any matrix \(\widetilde{H}\) that satisfies
	\begin{align*}
		0 \dot{\leq} \widetilde{H} \bone, \quad 0 \dot{\leq} \widetilde{H}^\top \bone, \quad 0 \dot{\geq} \widetilde{H} - \diag \widetilde{H},
	\end{align*}
	the matrix \(\widetilde{\bH} = \widetilde{H} \otimes I_d\) satisfies the inequality
    \begin{align}
        \begin{pmatrix}
            \bz\\
            \bg
        \end{pmatrix}^\top
        \begin{pmatrix}
            -ml (\widetilde{\bH} + \widetilde{\bH}^\top) & (l+m) \widetilde{\bH}\\
            (l+m) \widetilde{\bH}^\top & - (\widetilde{\bH} + \widetilde{\bH}^\top)
        \end{pmatrix}
        \begin{pmatrix}
            \bz\\
            \bg
        \end{pmatrix} &\geq 0 \label{eq:zamesFalbInequality}
    \end{align}
	for all \(\bz = (z_0,z_1,\ldots,z_\tau)\) and \(\bg = (g_0,g_1,\ldots,g_\tau)\) with \(z_t \in \bbR^d\) and \(g_t = \nabla f(z_t)\).
\end{lemma}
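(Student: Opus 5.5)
Since $f\in\scrS_{m,l}^0$, the map $\nabla f^m$ is monotone and $\nabla f^l$ is anti-monotone. The plan is the standard Zames--Falb rearrangement argument. Set $p_t = \nabla f^m(z_t) = g_t - m z_t$ and $q_t = -\nabla f^l(z_t) = l z_t - g_t$. A direct expansion, using $\widetilde{\bH}^\top$ in the off-diagonal block, shows that the quadratic form in \eqref{eq:zamesFalbInequality} equals
\begin{align*}
   -2\,\bg^{m\top}\widetilde{\bH}\,\bg^{l} \;=\; 2\sum_{s,t}\widetilde{H}_{ts}\, p_t^\top q_s .
\end{align*}
I will check the signs of the cross terms carefully: $(l+m)z^\top \widetilde{\bH} g$ must match $(g-mz)^\top\widetilde{\bH}(lz-g)$ summed with its transpose. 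It therefore suffices to show
\begin{align*}
   \sum_{s,t}\widetilde{H}_{ts}\, p_t^\top q_s \geq 0 .
\end{align*}

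The key pointwise inequality comes from \eqref{eq:slopeRestrictionCondition}. For all $t,s$,
\begin{align*}
   (p_t-p_s)^\top(q_t-q_s)\ge 0 ,
\end{align*}
and taking $z_2=0$ with $\nabla f(0)=0$ gives $p_t^\top q_t\ge 0$. Write $\widetilde{H}_{ts}=-h_{ts}$ with $h_{ts}\ge 0$ for $t\ne s$. Using
\begin{align*}
   p_t^\top q_s + p_s^\top q_t = p_t^\top q_t + p_s^\top q_s - (p_t-p_s)^\top(q_t-q_s),
\end{align*}
I split the off-diagonal sum into symmetric pairs:
\begin{align*}
   \sum_{t\ne s}\widetilde{H}_{ts}\, p_t^\top q_s
   = -\tfrac12\sum_{t\ne s}(h_{ts}+h_{st})\bigl(p_t^\top q_t + p_s^\top q_s\bigr)
   + \tfrac12\sum_{t\ne s}(h_{ts}+h_{st})(p_t-p_s)^\top(q_t-q_s).
\end{align*}
The second sum is nonnegative. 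The first collects to $-\tfrac12\sum_t\bigl(\sum_{s\ne t}(h_{ts}+h_{st})\bigr)p_t^\top q_t$.

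Adding the diagonal terms, the total is bounded below by
\begin{align*}
   \sum_t\Bigl(\widetilde{H}_{tt}-\tfrac12\sum_{s\ne t}h_{ts}-\tfrac12\sum_{s\ne t}h_{st}\Bigr)p_t^\top q_t
   = \sum_t \tfrac12\bigl[(\widetilde{H}\bone)_t+(\widetilde{H}^\top\bone)_t\bigr]\,p_t^\top q_t \ge 0 .
\end{align*}
Nonnegativity follows from double hyperdominance together with $p_t^\top q_t\ge0$. The Kronecker factor $I_d$ only turns scalar products into inner products, so it changes nothing.

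The main obstacle is bookkeeping rather than ideas. First, I must verify that the block matrix in \eqref{eq:zamesFalbInequality} really reduces to $2\sum\widetilde{H}_{ts}p_t^\top q_s$ with the transposition convention matching the one used above. Second, I must apply \eqref{eq:slopeRestrictionCondition} with the correct sign orientation; the paper's version states $\le 0$ for $\nabla f^m,\nabla f^l$, which gives $\ge0$ for $p,q$. If the transposition convention turns out reversed, the argument is symmetric in $\widetilde{H}\leftrightarrow\widetilde{H}^\top$, and the hypotheses are symmetric as well.
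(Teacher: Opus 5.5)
Your first step, the reduction to $2p^\top\widetilde{\bH}q$, fails for nonsymmetric $\widetilde H$, and changing the transposition convention does not fix it. With $p=\bg-m\bz$ and $q=l\bz-\bg$, the form in \eqref{eq:zamesFalbInequality} equals $2p^\top\widetilde{\bH}q+2l\,\bz^\top(\widetilde{\bH}-\widetilde{\bH}^\top)\bg=\tfrac{2}{l-m}\bigl(l\,q^\top\widetilde{\bH}p-m\,p^\top\widetilde{\bH}q\bigr)$. One of the two bilinear forms carries a negative weight, so swapping $\widetilde H\leftrightarrow\widetilde H^\top$ does not help. In fact the inequality as printed is false. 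Take $d=1$, $m=1$, $l=2$, and $\nabla f(z)=2z$ for $z\le 2$, $\nabla f(z)=z+2$ for $z\ge 2$, so $f\in\scrS_{1,2}^0$. Let $\bz=(1,3,2)$, $\bg=(2,5,4)$, and take the doubly hyperdominant $\widetilde H=\begin{psmallmatrix}1&-1&0\\0&1&-1\\-1&0&1\end{psmallmatrix}$. Then $\bz^\top\widetilde H\bz=3$, $\bz^\top\widetilde H\bg=4$, $\bg^\top\widetilde H\bg=7$, and the form equals $-12+24-14=-2$. The true statement, and the one the proof of Theorem~\ref{thm:stochasticIQCFiniteHorizon} actually uses, is $(\bg-m\bz)^\top\widetilde{\bH}(l\bz-\bg)\ge 0$. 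Its matrix has off-diagonal block $m\widetilde{\bH}+l\widetilde{\bH}^\top$ and coincides with the printed one exactly when $\widetilde H$ is symmetric. That is the statement you should prove.

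Even for that corrected statement, your pairing step controls only the symmetric part of $\widetilde H$. The identity for $p_t^\top q_s+p_s^\top q_t$ needs equal weights on both orientations, so your display is not an identity. Its right-hand side equals $-\sum_{t\ne s}(h_{ts}+h_{st})\,p_t^\top q_s$. With the prefactors corrected to $\mp\tfrac14$ (which your ``collects to'' line tacitly uses), it is the off-diagonal part of $p^\top\tfrac12(\widetilde{\bH}+\widetilde{\bH}^\top)q$. The antisymmetric remainder $\tfrac12\sum_{t\ne s}(h_{st}-h_{ts})\,p_t^\top q_s$ is simply dropped. So you have proved only the symmetric-multiplier case, while the $H$ used for Theorem~\ref{thm:ZamesFalb} is lower triangular.

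Your ingredients cannot close this gap. For $d\ge 2$, pairwise monotonicity plus the sector condition are not enough. With planar rotations $p_t=R_{80^\circ}q_t$ and $q_t=R_{120^\circ}^t q_0$, both properties hold, yet $p^\top\widetilde{\bH}q=3(\cos 80^\circ-\cos 40^\circ)\|q_0\|^2<0$ for the $\widetilde H$ above.

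The missing idea is \emph{cyclic} monotonicity combined with a cycle decomposition. This is the content of \cite{fetzer2017absolute,willems1968some}, which the paper cites instead of giving a proof.
\begin{itemize}
\item Add an extra node $\star$ with $z_\star=0$, so $p_\star=q_\star=0$.
\item Add edges $t\to\star$ of weight $(\widetilde H\bone)_t$ and $\star\to t$ of weight $(\widetilde H^\top\bone)_t$. Together with weights $h_{ts}$ on $t\to s$, the edge weights now form a circulation.
\item Decompose the circulation into directed cycles.
\item Bound each cycle by $\sum_j p_{t_j}^\top(q_{t_j}-q_{t_{j+1}})\ge 0$. This follows by summing $f^m(z_s)\ge f^m(z_t)+p_t^\top(z_s-z_t)+\tfrac{1}{2(l-m)}\|p_s-p_t\|^2$ around the cycle and using $(l-m)z=p+q$.
\end{itemize}
This step uses that $\nabla f$ is a gradient, not merely a monotone map.
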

\begin{proof}
	This result is extracted from \cite{fetzer2017absolute} building on \cite{willems1968some}. 
\end{proof}
Consult also the original (continuous-time) source \cite{zames1968stability} of the Zames-Falb multipliers or \cite{scherer2023optimization}, where they are presented in the context of optimization algorithms.

\begin{lemma}[\cite{HELMERSSON19993361}, Theorem 3]
	\label{lem:elimination}
	Consider the matrix inequality
	\begin{align}
		\begin{pmatrix}
			I_k\\
			U^\top \calK V + W
		\end{pmatrix}^\top \calP \begin{pmatrix}
		I_k\\
		U^\top \calK V + W
	\end{pmatrix} \prec 0
	\label{eq:QI}
	\end{align}
	and assume that \(\calP = \calP^\top\) is invertible with exactly \(k\) negative eigenvalues. Let \(U_{\perp}, V_{\perp}\) be basis matrices of \(\ker (U), \ker (V)\). Then there exists a \(\calK\) such that \eqref{eq:QI} is satisfied if and only if the matrix inequalities
	\begin{align*}
		V_{\perp}^\top \begin{pmatrix}
			I\\ W
		\end{pmatrix}^\top \calP \begin{pmatrix}
		I\\ W
	\end{pmatrix} V_{\perp} &\prec 0\\ 
	U_{\perp}^\top \begin{pmatrix}
		W^\top \\ -I
	\end{pmatrix}^\top \calP^{-1} \begin{pmatrix}
		W^\top\\ -I
	\end{pmatrix} U_{\perp} &\succ 0
	\end{align*}
	are satisfied.
\end{lemma}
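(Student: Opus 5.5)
The proof splits into necessity, which is a direct restriction argument, and sufficiency, where a suitable $\calK$ has to be constructed. The main tool throughout is the \emph{dualization lemma}. Let $\calP$ be invertible with exactly $k$ negative eigenvalues, let $X$ have $k$ columns, and let $\operatorname{im}\begin{psmallmatrix} I_k\\ X\end{psmallmatrix}$ denote the image of $\begin{psmallmatrix} I_k\\ X\end{psmallmatrix}$. Then
\begin{align*}
\begin{pmatrix} I_k\\ X\end{pmatrix}^\top \calP \begin{pmatrix} I_k\\ X\end{pmatrix}\prec 0
\quad\Longleftrightarrow\quad
\begin{pmatrix} X^\top\\ -I\end{pmatrix}^\top \calP^{-1} \begin{pmatrix} X^\top\\ -I\end{pmatrix}\succ 0 .
\end{align*}
I will prove this first. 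The image of $\begin{psmallmatrix} X^\top\\ -I\end{psmallmatrix}$ is exactly the $\calP$-orthogonal complement $\calP\operatorname{im}\begin{psmallmatrix} I_k\\ X\end{psmallmatrix}^{\perp}$. A $k$-dimensional subspace on which $\calP$ is negative definite is maximal negative, because $\calP$ has exactly $k$ negative eigenvalues. Sylvester's inertia argument then shows that $\calP^{-1}$ is positive definite on that complementary subspace, and the converse direction follows symmetrically.

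\textbf{Necessity.} Suppose some $\calK$ satisfies \eqref{eq:QI}, and set $X := U^\top\calK V + W$. Multiplying \eqref{eq:QI} from both sides by $V_\perp$ and using $XV_\perp = WV_\perp$ gives the first condition. The dualization lemma turns \eqref{eq:QI} into $\begin{psmallmatrix} X^\top\\ -I\end{psmallmatrix}^\top \calP^{-1}\begin{psmallmatrix} X^\top\\ -I\end{psmallmatrix}\succ 0$. Multiplying this by $U_\perp$, and using $X^\top U_\perp = V^\top\calK^\top U U_\perp + W^\top U_\perp = W^\top U_\perp$, gives the second condition.

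\textbf{Sufficiency.} I will first remove $W$ by the congruence $T = \begin{psmallmatrix} I & 0\\ W & I\end{psmallmatrix}$, replacing $\calP$ with $\tilde\calP := T^\top\calP T$. This preserves inertia, since $\tilde\calP$ is congruent to $\calP$. It also satisfies $\tilde\calP^{-1} = T^{-1}\calP^{-1}T^{-\top}$ with $T^{-\top}\begin{psmallmatrix} 0\\ -I\end{psmallmatrix} = \begin{psmallmatrix} W^\top\\ -I\end{psmallmatrix}$, so after this step $W=0$. The two conditions now read $V_\perp^\top\tilde\calP_{11}V_\perp\prec 0$ and $U_\perp^\top(\tilde\calP^{-1})_{22}U_\perp\succ 0$. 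Next I will choose bases in which $V = (I\ 0)$ and $U = (I\ 0)$ after removing redundant rows. Then $U^\top\calK V$ is a matrix whose only nonzero block is a free block $\calK$ in the top-left corner, and $V_\perp$, $U_\perp$ select the complementary coordinates. I will partition $\tilde\calP$ into $3\times 3$ blocks according to $\operatorname{im}V^\top$, $\ker V$, $\ker U$ and $\operatorname{im}U^\top$, and try to prove existence of $\calK$ by Schur complements. The plan is to eliminate the $\ker V$-block using the first condition. This leaves a quadratic inequality in $\calK$ of the form $\begin{psmallmatrix} I\\ \calK\end{psmallmatrix}^\top\hat\calP\begin{psmallmatrix} I\\ \calK\end{psmallmatrix}\prec 0$. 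Its solvability reduces to checking that $\hat\calP_{22}\succ 0$ on the relevant block (where one can take $\calK = -\hat\calP_{22}^{-1}\hat\calP_{21}$ times a large scalar, or simply the completing-squares choice), and that the remaining Schur complement is negative.

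\textbf{Main obstacle.} The hardest step is showing that the second condition is exactly what makes $\hat\calP_{22}$ positive definite, or, where $\hat\calP_{22}$ is degenerate, makes the completing-squares candidate admissible. This has to go through the block-inverse formula linking $(\tilde\calP^{-1})_{22}$ restricted to $\ker U$ with the Schur complement of $\tilde\calP$. It also needs the inertia hypothesis, so that the negative and positive eigenvalue counts match up. If the direct block computation becomes unwieldy, I would fall back on a cleaner alternative: apply Finsler's lemma twice. First, eliminate the $V$-direction, which is legitimate by the first condition. Second, apply dualization to the resulting inequality and eliminate the $U$-direction there, which is legitimate by the second condition. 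Finally, dualize back, invoking the inertia count at each dualization.
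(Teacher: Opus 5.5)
The paper gives no proof of this lemma; it imports it from Helmersson's Theorem~3, so your argument has to stand on its own. Several parts are correct: the dualization lemma (up to a small slip, since $\operatorname{im}\begin{psmallmatrix}X^\top\\ -I\end{psmallmatrix}$ is the \emph{Euclidean} orthogonal complement of $\operatorname{im}\begin{psmallmatrix}I_k\\ X\end{psmallmatrix}$, which $\calP^{-1}$ maps onto the $\calP$-orthogonal complement), the necessity direction, the removal of $W$ by $T$, the normalization of $U,V$, and the Schur complement on the $\ker V$ block. The gap is exactly the step you flag: the hypotheses do not force $\hat\calP_{22}\succ 0$, and solvability does not need it. Take $k=1$, $U=V=1$, $W=0$, $\calP=\diag(1,-1)$. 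Both projected conditions are vacuous and $1-\calK^2<0$ holds for $|\calK|>1$. Yet $\hat\calP_{22}=-1$, and the completing-squares value $\hat\calP_{11}-\hat\calP_{12}\hat\calP_{22}^{-1}\hat\calP_{21}=1$ is positive. In general $\hat\calP_{22}$ is indefinite, so the plan ``derive $\hat\calP_{22}\succ0$ from the second condition'' cannot succeed. The Finsler fallback is not a way around this either. Finsler's lemma and the affine projection lemma handle unknowns entering affinely, whereas here $\calK$ enters quadratically through an indefinite block, so each one-sided elimination step would again need the argument below.

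What the second condition actually yields is an inertia statement. After your normalizations, index coordinates by $(k_1,k_2\mid p_1,p_2)$ for $\operatorname{im}V^\top,\ker V,\operatorname{im}U^\top,\ker U$. Let $S$ be the Schur complement of $\tilde\calP$ with respect to its $k_2$ block, which is negative definite by the first condition. Since the $p_2$ rows of $\begin{psmallmatrix}I\\ U^\top\calK V\end{psmallmatrix}$ vanish, your $\hat\calP$ is the $(k_1,p_1)$ principal block of $S$. On the other hand, the block-inverse formula gives $S^{-1}=(\tilde\calP^{-1})_{(k_1,p_1,p_2)}$, whose $p_2$ block is positive definite by the second condition. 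Hence $\hat\calP$ is invertible, and $\hat\calP^{-1}$ is the Schur complement of $S^{-1}$ with respect to that $p_2$ block. Applying Haynsworth's inertia additivity twice, starting from the $k_1+k_2$ negative eigenvalues of $\tilde\calP$, shows that $\hat\calP$ has exactly $k_1$ negative eigenvalues. The second missing ingredient is that, for unstructured $\calK$, this count alone suffices. Let the columns of $\begin{psmallmatrix}Z_1\\ Z_2\end{psmallmatrix}$ span the negative eigenspace of $\hat\calP$. For all sufficiently small $\epsilon\neq 0$ avoiding the finitely many roots of $\det(Z_1+\epsilon I)$, the matrix $\begin{psmallmatrix}Z_1+\epsilon I\\ Z_2\end{psmallmatrix}$ still renders $\hat\calP$ negative definite by continuity. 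Then $\calK=Z_2(Z_1+\epsilon I)^{-1}$ solves the reduced inequality by congruence. With these two steps in place of the $\hat\calP_{22}\succ 0$ claim, your sufficiency argument closes.
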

\begin{lemma}
	\label{lem:transformationVector}
	Consider the filter matrices \eqref{eq:filterRealization}.
	Then the functions \(\bar{q}(\lambda)\), \(q(\lambda)\) from \eqref{eq:qFunctions} solve the equation system
	\begin{align*}
		\begin{pmatrix}
			(A_\psi^\top - \rho^{-1}I - C_\psi^\top D_\psi^{-\top} B_\psi^\top ) & \rho^{-1}C_\psi^\top D_\psi^{-\top}\\
			-B_\psi^\top & \rho^{-1}
		\end{pmatrix}
		\begin{pmatrix}
			\bar{q}(\lambda)\\
			q(\lambda)
		\end{pmatrix}
		=
		\begin{pmatrix}
			0\\
			\rho^{-1}\lambda_0
		\end{pmatrix}.
	\end{align*}
\end{lemma}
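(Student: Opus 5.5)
The plan is a direct verification: write out the filter matrices \eqref{eq:filterRealization} explicitly and substitute the definition \eqref{eq:qFunctions} of $\bar{q}(\lambda)$ and $q(\lambda)$. I will use that $A_\psi$ is the upper shift matrix, so $A_\psi^\top$ has ones on the subdiagonal. The remaining data are $B_\psi = e_s$ (the last unit vector), $C_\psi = (\lambda_s, \lambda_{s-1}, \ldots, \lambda_1)$ and $D_\psi = \lambda_0$. Since $D_\psi$ is a scalar, $D_\psi^{-\top} = 1/\lambda_0$, and I will assume $\lambda_0 \neq 0$, which is implicit in the statement. Write $\bar{q}_k$ for the $k$-th entry of $\bar{q}(\lambda)$. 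By \eqref{eq:qFunctions},
\begin{align*}
\bar{q}_k = \sum_{j=0}^{k-1}\rho^{j}\lambda_{s-k+1+j}, \qquad \bar{q}_1 = \lambda_s, \qquad \bar{q}_k = \rho\,\bar{q}_{k-1} + \lambda_{s-k+1},
\end{align*}
and $q(\lambda) = \rho\,\bar{q}_s + \lambda_0$.

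I will check the scalar second block row first, since it feeds into the first. It reads $-B_\psi^\top \bar{q} + \rho^{-1} q = -\bar{q}_s + \rho^{-1}(\rho\bar{q}_s + \lambda_0) = \rho^{-1}\lambda_0$, which holds as required.

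For the first block row, I group the two terms containing $C_\psi^\top D_\psi^{-\top}$:
\begin{align*}
(A_\psi^\top - \rho^{-1}I)\bar{q} - C_\psi^\top \lambda_0^{-1}\bigl(B_\psi^\top\bar{q} - \rho^{-1} q\bigr).
\end{align*}
The bracket equals $-\rho^{-1}\lambda_0$ by the second-row computation, so the expression becomes $(A_\psi^\top - \rho^{-1}I)\bar{q} + \rho^{-1}C_\psi^\top$. Multiplying by $\rho$, it remains to show
\begin{align*}
\rho A_\psi^\top \bar{q} - \bar{q} + C_\psi^\top = 0.
\end{align*}
Componentwise, $(A_\psi^\top\bar{q})_1 = 0$ and $(A_\psi^\top\bar{q})_k = \bar{q}_{k-1}$ for $k \geq 2$, while $(C_\psi^\top)_k = \lambda_{s-k+1}$. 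The first component therefore gives $-\bar{q}_1 + \lambda_s = 0$. The $k$-th component gives $\rho\bar{q}_{k-1} - \bar{q}_k + \lambda_{s-k+1} = 0$, which is exactly the Horner-type recursion above. This completes the verification.

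There is no real obstacle here. The only point requiring care is index bookkeeping: the order of the coefficients in $C_\psi$ is reversed relative to $\lambda$, the transpose of the shift matrix has its ones on the subdiagonal, and the non-obvious cancellation comes from substituting the second row into the first.
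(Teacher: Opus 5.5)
Your verification is correct and follows essentially the paper's route. Both arguments rest on the two identities $(A_\psi^\top-\rho^{-1}I)\bar{q}(\lambda)=-\rho^{-1}C_\psi^\top$ and $B_\psi^\top\bar{q}(\lambda)=\rho^{-1}(q(\lambda)-\lambda_0)$; the only difference is that the paper gets the first by writing $A_\psi^\top-\rho^{-1}I$ as minus the inverse of a lower-triangular Toeplitz matrix of powers of $\rho$, while you check it componentwise via the Horner recursion.
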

\begin{proof}
    The matrix $A_\psi^\top - \rho^{-1}I$ takes the form
	\begin{align*}
		\begin{pmatrix}
			-\rho^{-1} & 0 & \cdots & 0 & 0\\
			1 & -\rho^{-1} & \ddots & \vdots & \vdots\\
			0 & 1 & \ddots & 0 & 0\\
			\vdots & \ddots & \ddots & \rho^{-1} & 0\\
			0 & \cdots & 0 & 1 & -\rho^{-1}
		\end{pmatrix}
		=
		-
		\begin{pmatrix}
			\rho & 0 & \cdots & 0\\
			\rho^2 & \rho & \ddots & \vdots\\
			\vdots & \ddots & \ddots & 0\\
			\rho^s & \rho^{s-1} & \cdots & \rho
		\end{pmatrix}^{-1}
	\end{align*}
	Consequently, we have \((A_\psi^\top - \rho^{-1}I)\bar{q}(\lambda) =
		-\rho^{-1}C_\psi^\top\). Thus, we obtain the first equation by
	\begin{align*}
		(A_\psi^\top &- \rho^{-1}I - C_\psi^\top D_\psi^{-\top} B_\psi^\top )\bar{q}(\lambda)
		\\
		&= (A_\psi^\top - \rho^{-1}I)\bar{q}(\lambda) - C_\psi^\top D_\psi^{-\top} B_\psi^\top \bar{q}(\lambda)\\
		&= 
		-\rho^{-1}C_\psi^\top - C_\psi^\top D_\psi^{-\top} B_\psi^\top \bar{q}(\lambda)\\
		&=
		-\rho^{-1}C_\psi^\top - \rho^{-1}C_\psi^\top \frac{q(\lambda) - \lambda_0}{\lambda_0}\\
		&= -q(\lambda)\rho^{-1}C_\psi^\top D_\psi^{-\top}.
	\end{align*}
	Further, \(B_\psi^\top \bar{q}(\lambda)\) simply extracts the last element of \(\bar{q}(\lambda)\), which yields the second equation by
	\begin{align*}
		\rho^{-1}q(\lambda) - B_\psi^\top \bar{q}(\lambda)
		&=
		\rho^{-1}q(\lambda) - \rho^{-1} (\lambda_0 - q(\lambda))
		=
		\rho^{-1} \lambda_0.
	\end{align*}
	This concludes the proof. 
\end{proof}

\begin{lemma}
	\label{lem:ZamesFalbSchurDueToPostiveRealness}
	Consider the filter realization \eqref{eq:filterRealization} and let the parameters \(\lambda_0,\ldots,\lambda_s\) satisfy \(\lambda_1,\ldots,\lambda_s \leq 0\) and \(\sum_{j = 0}^s \rho^{-j}\lambda_j > 0\) for some \(\rho \in ]0,1]\). If \(\lambda_0 > 0\), then the matrix
	\begin{align*}
		A_\psi - B_\psi D_\psi^{-1} C_\psi = 
		\begin{pmatrix}
			0 & 1 & \cdots & 0\\
			\vdots & \ddots & \ddots & \vdots\\
			0 & \cdots & 0 & 1\\
			-\lambda_s/\lambda_0 & -\lambda_{s-1}/\lambda_0 & \cdots & -\lambda_1/\lambda_0
		\end{pmatrix}
	\end{align*}
	is Schur-stable.
\end{lemma}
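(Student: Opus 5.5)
We need to show that the companion matrix with last row $(-\lambda_s/\lambda_0,\ldots,-\lambda_1/\lambda_0)$ is Schur stable. The plan is to identify its characteristic polynomial and then apply a dominance (Rouché/Eneström–Kakeya-type) argument using the sign conditions on the $\lambda_j$.

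The characteristic polynomial of this companion matrix is, after multiplying by $\lambda_0>0$,
\begin{align*}
   p(z) = \lambda_0 z^s + \lambda_1 z^{s-1} + \cdots + \lambda_s .
\end{align*}
Equivalently, its eigenvalues are the zeros of $z^s\,\psi(1/z)$, where $\psi(\zeta)=\sum_{j=0}^s \lambda_j \zeta^j$ is the FIR filter transfer function. So it suffices to show that $p$ has no zeros with $|z|\ge 1$.

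Suppose $|z|\ge 1$ and $p(z)=0$. Dividing by $z^s$ gives
\begin{align*}
   \lambda_0 = -\sum_{j=1}^s \lambda_j z^{-j} = \sum_{j=1}^s |\lambda_j| z^{-j},
\end{align*}
using $\lambda_j\le 0$ for $j\ge1$. Taking absolute values,
\begin{align*}
   \lambda_0 \le \sum_{j=1}^s |\lambda_j|\,|z|^{-j} \le \sum_{j=1}^s |\lambda_j| \le \sum_{j=1}^s |\lambda_j|\rho^{-j},
\end{align*}
since $|z|\ge1$ and $\rho\in\,]0,1]$. This gives $\sum_{j=0}^s \rho^{-j}\lambda_j = \lambda_0 - \sum_{j\ge1}|\lambda_j|\rho^{-j} \le 0$, contradicting the hypothesis $\sum_{j=0}^s \rho^{-j}\lambda_j > 0$. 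Hence every eigenvalue satisfies $|z|<1$.

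The degenerate case where all $\lambda_j=0$ for $j\ge1$ is harmless: the matrix is then nilpotent, with all eigenvalues equal to $0$. The edge case $s=0$ is vacuous.

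The only step needing care is confirming that the displayed matrix is the companion form whose characteristic polynomial is exactly $p(z)/\lambda_0$, with the coefficient ordering matching. The standard expansion $\det(zI - M) = z^s + (\lambda_1/\lambda_0) z^{s-1} + \cdots + \lambda_s/\lambda_0$ for a bottom-row companion matrix confirms this. The matrix identity $A_\psi - B_\psi D_\psi^{-1} C_\psi$ follows directly from \eqref{eq:filterRealization}, since $B_\psi D_\psi^{-1} C_\psi$ has only a nonzero last row, equal to $C_\psi/\lambda_0$.

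Note that the strict inequality $\sum_j \rho^{-j}\lambda_j>0$ is what excludes boundary eigenvalues. In fact the chain of inequalities gives the stronger conclusion that every eigenvalue satisfies $|z|<\rho$.
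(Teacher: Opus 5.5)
Your proof is correct and is essentially the paper's argument: both identify the characteristic polynomial $z^s+\sum_{j=1}^s(\lambda_j/\lambda_0)z^{s-j}$ and combine $\lambda_0>\sum_{j=1}^s|\lambda_j|\rho^{-j}\geq\sum_{j=1}^s|\lambda_j|$ with the triangle inequality to rule out roots with $|z|\geq 1$; the only difference is that you phrase it as a contradiction rather than a direct dominance estimate. Your closing claim that all eigenvalues satisfy $|z|<\rho$ is also true, but it does not follow from the chain as written; you need to rerun it under the assumption $|z|\geq\rho$, using the direct bound $|z|^{-j}\leq\rho^{-j}$.
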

\begin{proof}
	Conceptually, the lemma follows from the fact that the system defined by the filter matrices \(A_\psi, B_\psi, C_\psi, D_\psi\) is positive real.

	However, we provide the following elementary proof. The characteristic polynomial of the matrix \(A_\psi - B_\psi D_\psi^{-1} C_\psi\) is given by
	\begin{align*}
		\det \left( zI - A_\psi + B_\psi D_\psi^{-1} C_\psi \right) = z^s + \sum_{j = 1}^s \frac{\lambda_j}{\lambda_0} z^{s-j}.
	\end{align*}
	Since \(\lambda_1,\ldots,\lambda_s \leq 0\), the condition \(\sum_{j = 0}^s \rho^{-j}\lambda_j > 0\) implies
	\begin{align*}
		|\lambda_0| > \sum_{j = 1}^s |\lambda_j| \rho^{-j} \geq \sum_{j = 1}^s |\lambda_j|.
	\end{align*} 
	The last inequality follows from \(\rho \in ]0,1]\). Consequently, for any \(z \in \bbC\) with \(|z| \geq 1\), we have
	\begin{align*}
		|z|^s &> \sum_{j = 1}^s \frac{|\lambda_j|}{|\lambda_0|} |z|^{s-j} \geq \left|\sum_{j = 1}^s \frac{\lambda_j}{\lambda_0} z^{s-j}\right|.
	\end{align*}
	This implies that the characteristic polynomial does not have roots outside the open unit disk, which concludes the proof. 
\end{proof}

\begin{lemma}
	\label{lem:matringaleDifference}
	The stochastic processes \(\bw^1 = (w_0^1,w_1^1,\ldots,w_{\tau-1}^1)\) and \(\bw^2 = (w_0^2,w_1^2,\ldots,w_{\tau-1}^2)\), defined in \eqref{eq:gradientSignals}, satisfy for all \(t = 0,\ldots,\tau-1\)
	\begin{align*}
		\bbE \left[ w_t^1 \middle| \calF_{t-1} \right] &= 0, & \bbE \left[ w_t^2 \middle| \calF_{t-1} \right] &= 0.
	\end{align*}
	Consequently, \(\bw^1\) or \(\bw^2\) are not correlated with stochastic processes \(by = (y_0,y_1,\ldots,y_{\tau-1})\) which are adapted to \(\calF_{t-1}\), such as \(y_t = z_t\) or \(y_t = g_t\). Specifically, the correlation matrices vanish in expectation, i.e.,
   \begin{align*}
      \bbE \left[ w_t^1 y_t^\top \right] &= 0, & \bbE \left[ w_t^2 y_t^\top \right] &= 0.
   \end{align*}
\end{lemma}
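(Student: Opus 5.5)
The plan is to prove the two conditional expectation identities first, and then deduce the vanishing of the cross-correlations by the tower property.

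\textbf{Step 1: $w_t^2$.} By the standing setup, $\scrI_t$ is drawn independently of the past mini-batches, so $\scrI_t$ is independent of $\calF_{t-1}$. Since $z_\star^f$ is deterministic, $w_t^2=\nabla f_{\scrI_t}(z_\star^f)$ is independent of $\calF_{t-1}$. Hence
\[
\bbE[w_t^2\mid\calF_{t-1}]=\bbE[\nabla f_{\scrI_t}(z_\star^f)]=\nabla f(z_\star^f)=0 .
\]
The middle equality uses unbiasedness of mini-batch sampling: each index has marginal distribution uniform on $\{1,\ldots,N\}$, both with and without replacement. The last equality holds because $z_\star^f$ is the minimizer of $f$.

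\textbf{Step 2: $w_t^1$.} The query point $z_t$ is $\calF_{t-1}$-measurable, while $\scrI_t$ is independent of $\calF_{t-1}$. By the freezing (substitution) lemma for conditional expectations,
\[
\bbE[\nabla f_{\scrI_t}(z_t)\mid\calF_{t-1}]=\phi(z_t),
\]
where $\phi(z)=\bbE[\nabla f_{\scrI}(z)]=\nabla f(z)$. The term $\nabla f(z_t)$ is $\calF_{t-1}$-measurable, so it passes through the conditional expectation unchanged. The term $\nabla f_{\scrI_t}(z_\star^f)$ has conditional mean zero by Step 1. Combining the three pieces gives $\bbE[w_t^1\mid\calF_{t-1}]=\nabla f(z_t)-\nabla f(z_t)-0=0$.

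\textbf{Integrability.} For the freezing lemma and the conditional expectations to be well defined, we need integrability. Each $\nabla f_i$ is $l$-Lipschitz by Assumption~\ref{ass:lmConvexity}, so $\|\nabla f_i(z_t)\|\le \|\nabla f_i(0)\|+l\|z_t\|$. The process $z_t$ has bounded second moments by assumption, hence all terms are square integrable.

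\textbf{Step 3: correlations.} Let $y_t$ be $\calF_{t-1}$-measurable with finite second moment; $z_t$ qualifies by assumption, and $g_t=\nabla f(z_t)$ qualifies by the Lipschitz bound above. Then $w_t^k y_t^\top$ is integrable by Cauchy--Schwarz. By the tower property and pulling out the known factor,
\[
\bbE[w_t^k y_t^\top]=\bbE\bigl[\bbE[w_t^k\mid\calF_{t-1}]\,y_t^\top\bigr]=0 .
\]

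\textbf{Main obstacle.} The only delicate point is justifying the independence of $\scrI_t$ from $\calF_{t-1}$ when $z_t$ is random. This requires reading the filtration as generated by $\scrI_0,\ldots,\scrI_{t-1}$ (plus possibly independent initial data), together with the paper's standing assumption that mini-batches at different times are sampled independently. Once this is in place, the substitution lemma applies and the remaining steps are routine.
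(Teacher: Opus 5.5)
Your proof is correct and follows essentially the same route as the paper: use independence of $\scrI_t$ from $\calF_{t-1}$ and unbiasedness of mini-batch sampling to get $\bbE[w_t^2\mid\calF_{t-1}]=\nabla f(z_\star^f)=0$, freeze the $\calF_{t-1}$-measurable $z_t$ to handle $w_t^1$, and conclude with the tower property. Your additions are the integrability argument via the $l$-Lipschitz gradients and the remark that independence of $\scrI_t$ from $\calF_{t-1}$ requires the filtration to be generated by past mini-batches (and independent initial data). These only make explicit what the paper's proof takes for granted.
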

\begin{proof}
	Since \(z_t\) is \(\calF_{t-1}\)-measurable and the mini-batch \(\scrI_t\) is independent of \(\calF_{t-1}\), the conditional expectations w.r.t. \(\calF_{t-1}\) are computed as follows.

   The conditional expectation of \(w_t^2 = \nabla f_{\scrI_t}(z_\star^f)\) is
	\begin{align}
		\bbE \left[ w_t^2 \middle| \calF_{t-1} \right] = \bbE \left[ \nabla f_{\scrI_t}(z_\star^f) \right] = \nabla f(z_\star^f) = 0, \label{eq:w2zeroCondExp}
	\end{align}
   because \(z_\star^f\) is not a random variable and the expectation of \(\nabla f_{\scrI_t}\) is \(\nabla f\).

	For \(w_t^1 = \nabla f_{\scrI_t}(z_t) - \nabla f(z_t) - \nabla f_{\scrI_t}(z_\star^f)\), the \(\calF_{t-1}\)-measurability of \(z_t\) yields
	\begin{align*}
		\bbE \left[ w_t^1 \middle| \calF_{t-1} \right]
		&= \bbE \left[ \nabla f_{\scrI_t}(z_t) \middle| \calF_{t-1} \right] - \nabla f(z_t) - \bbE \left[ \nabla f_{\scrI_t}(z_\star^f) \right]\\
		&= \nabla f(z_t) - \nabla f(z_t) = 0,
	\end{align*}
	where the second step uses the unbiasedness of the mini-batch gradient, i.e., \(\bbE[\nabla f_{\scrI_t}(z)] = \nabla f(z)\) for any fixed \(z\).

	For the product terms, let \(y_t\) be a stochastic process adapted to \(\calF_{t-1}\). By the tower property and \(\calF_{t-1}\)-measurability of \(y_t\),
	\begin{align*}
		\bbE \left[ w_t^k y_t^\top \right] = \bbE \left[ \bbE \left[ w_t^k \middle| \calF_{t-1} \right] y_t^\top \right] = 0
	\end{align*}
	for \(k = 1,2\). 
\end{proof}

\section{Proofs of main results}
\label{app:proofs}

\subsection{Proof of Theorem~\ref{thm:stochasticIQCFiniteHorizon}}

The multiplier inequality \eqref{eq:stochasticIQC} can be expressed as
\begin{align*}
	2\bbE \left[\big(\bg - m\bz\big)^\top \bH \big(l\bz - \bg\big)\right] -\tilde{b}\bbE \left[ (\bw^2)^\top \big(\diag(\mu) \otimes I_d\big) \bw^2 \right]\\
	+\sigma_\infty^2  \bone^\top \mu 
   - 2\tilde{b} \bbE \left[ (\bw^1)^\top \big(\diag(\nu) \otimes I_d\big) \bw^1 \right]
	\geq 0 .
\end{align*}

Using Definition~\ref{def:irreducibleVariance}, the sum of the second and third term can be upper bounded by zero. Further, dividing by 2, it remains to show that
\begin{align*}
	\bbE \left[\big(\bg - m\bz\big)^\top \bH \big(l\bz - \bg\big)\right] - \tilde{b}\bbE \left[(\bw^1)^\top \big(\diag(\nu) \otimes I_d\big) \bw^1\right]
\end{align*}
is larger than or equal to zero.
Since \(\nu \leq H \bone, \nu \leq H^\top \bone\), the matrix \(H - \diag(\nu)\) is already doubly hyperdominant. Consequently, \(\bbE \left[\big(\bg - m\bz\big)^\top \bH \big(l\bz - \bg\big)\right]\) can be written as
\begin{align*}
	\bbE \left[\big(\bg - m\bz\big)^\top (H - \diag(\nu)) \otimes I_d \big(l\bz - \bg\big)\right]\\
	\quad + \bbE \left[\big(\bg - m\bz\big)^\top \big(\diag(\nu) \otimes I_d\big) \big(l\bz - \bg\big)\right].
\end{align*}
Of these two terms, the first one is non-negative by Lemma~\ref{lem:doublyHyperdominantMultipliers}, because \(\bg\) is composed of the deterministic gradient \(\nabla f\) and \(H-\diag \nu\) is doubly hyperdominant. Consequently, the multiplier inequality holds if
\begin{align*}
	\bbE \left[\big(\bg - m\bz\big)^\top \big(\diag(\nu) \otimes I_d\big) \big(l\bz - \bg\big)\right]
	&\\
	- \tilde{b} \bbE \left[ (\bw^1)^\top \big(\diag(\nu) \otimes I_d\big) \bw^1 \right] &\geq 0.
\end{align*}
The fact that this last inequality holds is a direct consequence of Lemma~\ref{lem:varianceBoundBatchGradient}. Thus, the multiplier inequality holds. \qed

\subsection{Proof of Theorem~\ref{thm:ZamesFalb}}

Recalling \(g_t^l = 0\) for \(t < 0\) yields
\begin{align*}
	\sum_{t=0}^{\tau-1} \sum_{k = 0}^s \rho^{k-2t} \lambda_k (g_t^m)^\top g_{t-k}^l
	=
	(\bg^l)^\top \big(H \otimes I_d\big) \bg^m,
\end{align*}
where the matrix \(H\) is given by
\begin{align*}
	\begin{pmatrix}
		\lambda_0 & 0 & \cdots\\
		\lambda_1 \rho^{-1} & \lambda_0 \rho^{-1} & 0 & \cdots\\
		\lambda_2 \rho^{-2} & \lambda_1 \rho^{-2} & \lambda_0 \rho^{-2} & 0 & \cdots\\
		\vdots & & \ddots & \ddots & \ddots\\
		\lambda_s \rho^{-s} & \lambda_{s-1} \rho^{-s} & \cdots & \lambda_1 \rho^{-s} & \lambda_0 \rho^{-s}\\
		0 & \lambda_s \rho^{-s-1} & \lambda_{s-1} \rho^{-s-1} & \cdots & \lambda_1 \rho^{-s-1} & \ddots\\
		0 & 0 & \lambda_s \rho^{-s-2} & \cdots & \lambda_2 \rho^{-s-2} & \ddots\\
		\vdots & & & \ddots & \vdots & \ddots\\
		0 & 0 & 0 & \cdots & \lambda_s \rho^{-(\tau-1)} & \cdots & \lambda_0 \rho^{-(\tau-1)}
	\end{pmatrix}.
\end{align*}
Due to \(\sum_{j = 0}^s \rho^{-j}\lambda_j \geq 0\) and \(\lambda_1,\ldots,\lambda_s \leq 0\) the matrix \(H\) satisfies the conditions of Theorem~\ref{thm:stochasticIQCFiniteHorizon} for the specific choice of \(\nu \in \bbR^{\tau+1}\) with \(\nu_t = \sum_{j = 0}^s \rho^{-j}\lambda_j\) for all \(t = 0,\ldots,\tau\).

Thus, by Theorem~\ref{thm:stochasticIQCFiniteHorizon} with \(\mu = 0\), 
\begin{align*}
	\bbE (\bg^l)^\top \big(H \otimes I_d\big) \bg^m \geq \tilde{b} \sum_{t = 0}^\tau \nu_t \bbE \|w_t^2\|^2 = \tilde{b}\left(\sum_{j = 0}^s \rho^{-j}\lambda_j\right) \|\bw^2\|^2.
\end{align*}
This proves the claim. \qed

\subsection{Proof of Theorem~\ref{thm:synthesisConditions}}

Partition the certificate matrix \(P \in \bbR^{n_c + 1 + s \times n_c + 1 + s}\) and its inverse as
\begin{align}
	P = \begin{pmatrix}
		P_{11} & P_{12} & P_{13}\\
		P_{21} & P_{22} & P_{23}\\
		P_{31} & P_{32} & P_{33}
	\end{pmatrix} = \begin{pmatrix}
		\widetilde{P}_{11} & \widetilde{P}_{12} & \widetilde{P}_{13}\\
		\widetilde{P}_{21} & \widetilde{P}_{22} & \widetilde{P}_{23}\\
		\widetilde{P}_{31} & \widetilde{P}_{32} & \widetilde{P}_{33}
	\end{pmatrix}^{-1}
	\label{eq:certificatePartioning}
\end{align}
with blocks of dimensions \(n_c, 1, n_f\) and denote the blocks \(\begin{psmallmatrix}
	P_{22} & P_{23}\\
	P_{32} & P_{33}
\end{psmallmatrix}\) and \(\begin{psmallmatrix}
	\widetilde{P}_{22} & \widetilde{P}_{23}\\
	\widetilde{P}_{32} & \widetilde{P}_{33}
\end{psmallmatrix}\) by \(P_\psi\) and \(\widetilde{P}_\psi\), respectively. Then, by the definition of \(\calB_2\) in \eqref{eq:combinedAlgorithmAndFilter}, the matrix inequality \eqref{eq:IQC_convergenceProj2} is equivalent to to the last condition in \eqref{eq:synthesisCondition_d}. Since \(P_{22}\) needs to be positive definite, the conditions in \eqref{eq:synthesisCondition_d} further imply that the filter parameters satisfy 
\begin{align}
   \lambda_1,\ldots,\lambda_s &\leq 0, & \sum_{j = 0}^s \rho^{-j}\lambda_j &> 0. 
   \label{eq:strict_positiveRealness}
\end{align}
Equation~\ref{eq:strict_positiveRealness} shows that the conditions on the filter parameters \((\lambda_j)_{j=0}^s\) in Theorem~\ref{thm:convergenceRateDynamic} hold.

Hence, it remains to show that there exist algorithm parameters \(A_c, B_c, C_c, D_c\) such that \eqref{eq:IQC_convergenceProj1} is satisfied if and only if the conditions \eqref{eq:synthesisCondition_a}, \eqref{eq:synthesisCondition_b}, and \eqref{eq:synthesisCondition_c} are satisfied.

To transform \eqref{eq:IQC_convergenceProj1} into \eqref{eq:synthesisCondition_a}, \eqref{eq:synthesisCondition_b}, and \eqref{eq:synthesisCondition_c}, we first note that \eqref{eq:IQC_convergenceProj1} can be expressed as
\begin{align}
	\begin{pmatrix}
		I & 0\\
		0 & 1\\
		\calA &\calB_1\\
		\calC_1 & \calD_{11}
	\end{pmatrix}^\top
	\underbrace{
	\begin{pmatrix}
		-P & 0 & 0 & 0\\
		0 & 0 & 0 & 1\\
		0 & 0 & P & 0\\
		0 & 1 & 0 & 0
	\end{pmatrix}}_{\calP}
	\begin{pmatrix}
		I & 0\\
		0 & 1\\
		\calA &\calB_1\\
		\calC_1 & \calD_{11}
	\end{pmatrix} \prec 0
	\label{eq:factorizationForElimination}
\end{align}
The matrix \(\calP\) in the triple product \eqref{eq:factorizationForElimination} satisfies the inertia condition of Lemma~\ref{lem:elimination}. Next, we partition the system matrices in \eqref{eq:factorizationForElimination} as
\begin{align*}
	&
	\left(
	\begin{array}{c|c}
		\calA & \calB_1\\
		\hline
		\calC_1 & \calD_1
	\end{array}
	\right)
   =
	\left(
	\begin{array}{ccc|c}
		\rho^{-1}A_c & \rho^{-1}B_c & 0 & 0\\
		m \rho^{-1}C_c & \rho^{-1} + m\rho^{-1}D_c & 0 & \rho^{-1}\\
		(l-m)B_\psi C_c & (l-m)B_\psi D_c & A_\psi & -B_\psi\\
		\hline
		(l-m)D_\psi C_c & (l-m)D_\psi D_c & C_\psi & -D_\psi
	\end{array}
	\right) =\\
	&
	\underbrace{\begin{pmatrix}
		\rho^{-1}I & 0\\
		0 & m\rho^{-1}\\
		0 & (l-m)B_\psi\\
		\hline
		0 & (l-m)D_\psi
	\end{pmatrix}}_{U^\top}
	\underbrace{
	\begin{pmatrix}
		A_c & B_c\\
		C_c & D_c
	\end{pmatrix}}_{\calK}
	\underbrace{
	\begin{pmatrix}
		I & 0\\
		0 & 1\\
		0 & 0\\
		\hline
		0 & 0
	\end{pmatrix}^\top}_{V}
	+
	\underbrace{
	\left(
	\begin{array}{ccc|c}
		0 & 0 & 0 & 0\\
		0 & \rho^{-1} & 0 & \rho^{-1}\\
		0 & 0 & A_\psi & -B_\psi\\
		\hline
		0 & 0 & C_\psi & -D_\psi
	\end{array}
	\right)}_{W}.
\end{align*}
To apply Lemma~\ref{lem:elimination} we constructe the basis matrices $U_{\perp}, V_{\perp}$ of the kernels of $U, V$ given by
\begin{align*}
	U_{\perp} = \begin{pmatrix}
		0 & 0\\
		1 & 0\\
		0 & I\\
		\frac{-\rho^{-1}m}{l-m}D_\psi^{-\top} & -D_\psi^{-\top}B_\psi^\top 
	\end{pmatrix},
	\quad
	V_{\perp} = \begin{pmatrix}
		0 & 0\\
		0 & 0\\
		I & 0\\
		0 & 1
	\end{pmatrix}.
\end{align*}
Since the algorithm parameters \(A_c, B_c, C_c, D_c\) are arbitrary, this enables the elimination of \(\calK\) from \eqref{eq:factorizationForElimination} using Lemma~\ref{lem:elimination}. We obtain the reduced matrix inequalities
\begin{align*}
	% V_\perp^\top
	% \left(
	% \begin{array}{ccc|c}
	% 	I & 0 & 0 & 0\\
	% 	0 & 1 & 0 & 0\\
	% 	0 & 0 & I & 0\\
	% 	\hline
	% 	0 & 0 & 0 & I\\
	% 	\hline
	% 	0 & 0 & 0 & 0\\
	% 	0 & \rho^{-1} & 0 & \rho^{-1}\\
	% 	0 & 0 & A_\psi & -B_\psi\\
	% 	\hline
	% 	0 & 0 & C_\psi & -D_\psi
	% \end{array}
	% \right)^\top
	\begin{pmatrix}
		\bullet
	\end{pmatrix}^\top
	\begin{pmatrix}
		-P & 0 & 0 & 0\\
		0 & 0 & 0 & I\\
		0 & 0 & P & 0\\
		0 & I & 0 & 0
	\end{pmatrix}
	\left(
	\begin{array}{ccc|c}
		I & 0 & 0 & 0\\
		0 & 1 & 0 & 0\\
		0 & 0 & I & 0\\
		\hline
		0 & 0 & 0 & 1\\
		\hline
		0 & 0 & 0 & 0\\
		0 & \rho^{-1} & 0 & \rho^{-1}\\
		0 & 0 & A_\psi & -B_\psi\\
		\hline
		0 & 0 & C_\psi & -D_\psi
	\end{array}
	\right)
	V_\perp &\prec 0,
\end{align*}
\begin{align*}
	% U_\perp^\top
	% \left(
	% \begin{array}{ccc|c}
	% 	0 & \rho^{-1} & 0 & 0\\
	% 	0 & 0 & A_\psi^\top & C_\psi^\top\\
	% 	\hline
	% 	0 & \rho^{-1} & -B_\psi^\top & -D_\psi^\top\\
	% 	\hline
	% 	-I & 0 & 0 & 0\\
	% 	0 & -1 & 0 & 0\\
	% 	0 & 0 & -I & 0\\
	% 	\hline
	% 	0 & 0 & 0 & -1
	% \end{array}
	% \right)^\top
	\begin{pmatrix}
		\bullet
	\end{pmatrix}^\top
	\begin{pmatrix}
		-P & 0 & 0 & 0\\
		0 & 0 & 0 & 1\\
		0 & 0 & P & 0\\
		0 & 1 & 0 & 0
	\end{pmatrix}^{-1}
	\left(
	\begin{array}{ccc|c}
		0 & 0 & 0 & 0\\
		0 & \rho^{-1} & 0 & 0\\
		0 & 0 & A_\psi^\top & C_\psi^\top\\
		\hline
		0 & \rho^{-1} & -B_\psi^\top & -D_\psi^\top\\
		\hline
		-I & 0 & 0 & 0\\
		0 & -1 & 0 & 0\\
		0 & 0 & -I & 0\\
		\hline
		0 & 0 & 0 & -1
	\end{array}
	\right)
	U_\perp &\succ 0.
\end{align*}
Multiplying out the matrix products yields the conditions
\begin{align}
	\begin{pmatrix}
		I & 0\\
		\hline
		0 & 1\\
		\hline
		0 & \rho^{-1}\\
		A_\psi & -B_\psi\\
		\hline
		C_\psi & -D_\psi
	\end{pmatrix}^\top
	\begin{pmatrix}
		-P_{33} & 0 & 0 & 0\\
		0 & 0 & 0 & 1\\
		0 & 0 & P_\psi & 0\\
		0 & 1 & 0 & 0
	\end{pmatrix}
	\begin{pmatrix}
		I & 0\\
		\hline
		0 & 1\\
		\hline
		0 & \rho^{-1}\\
		A_\psi & -B_\psi\\
		\hline
		C_\psi & -D_\psi
	\end{pmatrix}&\prec 0,
	\label{eq:firstEliminationCondition}
\end{align}
\begin{align}
	% \begin{pmatrix}
	% 	(1-l/m) B_\psi^\top & (1-l/m)D_\psi^\top & -1/m\\
	% 	A_\psi^\top & C_\psi^\top & 0\\
	% 	-B_\psi^\top & -D_\psi^\top & 0\\
	% 	\rho(1-l/m)B_\psi^\top & \rho(1-l/m)D_\psi^\top & -\rho m^{-1}\\
	% 	I & 0 & 0\\
	% 	0 & 1 & 0\\
	% 	0 & 0 & 1
	% \end{pmatrix}
	\begin{pmatrix}
		\bullet
	\end{pmatrix}^\top
	\begin{pmatrix}
		-\widetilde{P}_\psi & 0 & 0 & 0\\
		0 & 0 & 0 & 1\\
		0 & 0 & \widetilde{P}_\psi & 0\\
		0 & 1 & 0 & 0
	\end{pmatrix}
	\begin{pmatrix}
		\rho^{-1} & 0\\
		\frac{-\rho^{-1} m}{l-m}C_\psi^\top D_\psi^{-\top} & A_\psi^\top - C_\psi^\top D_\psi^{-\top} B_\psi^\top\\
		\hline
		\frac{\rho^{-1}l}{l-m}D_\psi^{-\top} & 0\\
		\hline
		1 & 0\\
		0 & I\\
		\hline
		\frac{\rho^{-1}m}{l-m} & B_\psi^\top
	\end{pmatrix}
	&\succ 0.
	\label{eq:secondEliminationCondition}
\end{align}
The first of these inequalities \eqref{eq:firstEliminationCondition} is exactly \eqref{eq:synthesisCondition_a}.

The fact that neither of the inequalities \eqref{eq:firstEliminationCondition} or \eqref{eq:secondEliminationCondition} directly depends on the matrices \(P_{11}, P_{12}, P_{13}\) or \(\widetilde{P}_{11}, \widetilde{P}_{12}, \widetilde{P}_{13}\) enables the reparametrization of the certificate matrix \(P\) as
\begin{align*}
   P =
   \left(
   \begin{array}{c|cc}
      P_{11} & P_{12} & P_{13}\\
      \hline
      P_{21} & P_{22} & P_{23}\\
      P_{31} & P_{32} & P_{33}
   \end{array}
   \right)
   =
   \left(
   \begin{array}{c|c}
      P_\psi - \widetilde{P}_\psi^{-1} & P_\psi - \widetilde{P}_\psi^{-1}\\
      \hline
	   P_\psi - \widetilde{P}_\psi^{-1} & P_\psi
   \end{array}
   \right)
\end{align*}
in terms of the blocks \(P_\psi\) and \(\widetilde{P}_\psi\). Specifically, for any \(P_\psi\) and \(\widetilde{P}_\psi\) satisfying the condition
\begin{align}
	\begin{pmatrix}
		P_\psi & I\\
		I & \widetilde{P}_\psi
	\end{pmatrix}
	\succ 0,
	\label{eq:schurComplementPf}
\end{align}
the matrix \(P = \begin{psmallmatrix}
	P_\psi - \widetilde{P}_\psi^{-1} & P_\psi - \widetilde{P}_\psi^{-1}\\
	P_\psi - \widetilde{P}_\psi^{-1} & P_\psi
\end{psmallmatrix}\) is positive definite and its inverse has the right-lower block equal to \(\widetilde{P}_\psi\). The condition \eqref{eq:schurComplementPf} is also necessary for the existence of a positive definite matrix \(P\) whose right-lower block and inverse right-lower block are equal to \(P_\psi\) and \(\widetilde{P}_\psi\), respectively.

Clearly, this reparametrization assigns specific values to the blocks \(P_{11}, P_{12}, P_{13}\). However, since these blocks do not appear in the inequalities \eqref{eq:firstEliminationCondition} and \eqref{eq:secondEliminationCondition}, this does not affect the feasibility of these inequalities. Hence, we replace the search for \(P \succ 0\) satisfying \eqref{eq:IQC_convergenceProj1} by a search for \(P_\psi\) and \(\widetilde{P}_\psi\) satisfying \eqref{eq:schurComplementPf} along with the two inequalities \eqref{eq:firstEliminationCondition} and \eqref{eq:secondEliminationCondition}.

% The fact that neither of these inequalities directly depends on the matrices \(P_{11}, P_{12}, P_{13}\) or \(\widetilde{P}_{11}, \widetilde{P}_{12}, \widetilde{P}_{13}\) enables the elimination of these blocks. Specifically, for any \(P_\psi\) and \(\widetilde{P}_\psi\) satisfying the condition
% \begin{align}
% 	\begin{pmatrix}
% 		P_\psi & I\\
% 		I & \widetilde{P}_\psi
% 	\end{pmatrix}
% 	\succ 0,
% 	\label{eq:schurComplementPf}
% \end{align}
% the matrix \(P = \begin{psmallmatrix}
% 	P_\psi - \widetilde{P}_\psi^{-1} & P_\psi - \widetilde{P}_\psi^{-1}\\
% 	P_\psi - \widetilde{P}_\psi^{-1} & P_\psi
% \end{psmallmatrix}\) is positive definite and its inverse has the right lower block equal to \(\widetilde{P}_\psi\). The condition \eqref{eq:schurComplementPf} is also necessary for the existence of such a matrix \(P\). Hence, we can replace the search for \(P\) satisfying \eqref{eq:certificatePartioning} by a search for \(P_\psi\) and \(\widetilde{P}_\psi\) satisfying \eqref{eq:schurComplementPf} along with the two inequalities above.

It remains to show that \eqref{eq:secondEliminationCondition} and \eqref{eq:schurComplementPf} are equivalent to \eqref{eq:synthesisCondition_b} and \eqref{eq:synthesisCondition_c}. To this end, multiply \(\widetilde{P}_\psi\) from the right by 
\begin{align*}
   T 
   =
   \begin{pmatrix}
		q(\lambda) & 0\\
		-\frac{m}{l-m}\bar{q}(\lambda) & I
	\end{pmatrix}
\end{align*}
and from the right by \(T^\top\) to define
\begin{align*}
	\ovl{P}_f
	=
	\begin{pmatrix}
		\ovl{P}_{22} & \ovl{P}_{23}\\
		\ovl{P}_{32} & \ovl{P}_{33}
	\end{pmatrix}
	= \begin{pmatrix}
		q(\lambda) & 0\\
		-\frac{m}{l-m}\bar{q}(\lambda) & I
	\end{pmatrix}^\top
	\widetilde{P}_\psi
	\begin{pmatrix}
		q(\lambda) & 0\\
		-\frac{m}{l-m}\bar{q}(\lambda) & I
	\end{pmatrix}.
\end{align*}
The matrix inequalities \eqref{eq:schurComplementPf} and \eqref{eq:secondEliminationCondition} can be posed in terms of the transformed matrix \(\ovl{P}_f\) by multiplying \eqref{eq:schurComplementPf} from the left by \(\diag(I,T)^\top\) and from the right by \(\diag(I,T^\top)\) to obtain
\begin{align}
	\begin{pmatrix}
		P_{22} & P_{23} & q(\lambda) & 0\\
		P_{32} & P_{33} & -\frac{m}{l-m}\bar{q}(\lambda) & I\\
		q(\lambda) & -\frac{m}{l-m}\bar{q}(\lambda)^\top & \ovl{P}_{22} & \ovl{P}_{23}\\
		0 & I & \ovl{P}_{32} & \ovl{P}_{33}
	\end{pmatrix} \succ 0 \label{eq:transformedSchurComplementPf}
\end{align}
and by multiplying \eqref{eq:secondEliminationCondition} from the left by \(\diag(1,T)\) and from the right by \(\diag(1,T^\top)\) to obtain
\begin{align}
	\begin{pmatrix}
		\bullet
	\end{pmatrix}^\top
	\begin{pmatrix}
		-\ovl{P}_f & 0 & 0 & 0\\
		0 & 0 & 0 & 1\\
		0 & 0 & \ovl{P}_f & 0\\
		0 & 1 & 0 & 0
	\end{pmatrix}
	\begin{pmatrix}
		\rho^{-1} & 0\\
		* & A_\psi^\top - C_\psi^\top D_\psi^{-\top} B_\psi^\top\\
		\hline
		\frac{\rho^{-1}lq(\lambda)}{l-m}D_\psi^{-\top} & 0\\
		\hline
		1 & 0\\
		0 & I\\
		\hline
		\frac{\rho^{-1} m}{l-m}\lambda_0 & B_\psi^\top
	\end{pmatrix}
	&\succ 0. \label{eq:transformedSecondEliminationCondition}
\end{align}
Here \(*\) denotes \(\frac{-m}{l-m}((A_\psi^\top - C_\psi^\top D_\psi^{-\top} B_\psi^\top - \rho^{-1}I)\bar{q}(\lambda)+\rho^{-1}C_\psi^\top D_\psi^{-\top}q(\lambda))\), which evaluates to zero due to Lemma~\ref{lem:transformationVector}.

The final step to show the equivalence of \eqref{eq:schurComplementPf} and \eqref{eq:secondEliminationCondition} to \eqref{eq:synthesisCondition_b} and \eqref{eq:synthesisCondition_c} is the elimination of \(\ovl{P}_{33}\) as follows. First, choose \(\ovl{P}_{33}\) as solution of the Lyapunov equation in \(X\),
\begin{align*}
	(A_\psi -C_\psi^\top D_\psi^{-\top} B_\psi^\top)^\top X (A_\psi -C_\psi^\top D_\psi^{-\top} B_\psi^\top) - X = -\eta I,
\end{align*}
with parameter \(\eta > 0\) that can be chosen arbitrarily large.

Since the matrix \(A_\psi^\top -C_\psi^\top D_\psi^{-\top} B_\psi^\top\) is Schur by Lemma~\ref{lem:ZamesFalbSchurDueToPostiveRealness}, the solution to this Lyapunov equation exists and is of the form \(X_\eta = \eta X_\star\) for some positive definite matrix \(X_\star\). 

Under the choice of \(\ovl{P}_{33} = X_\eta\), \eqref{eq:transformedSecondEliminationCondition} evaluates to
\begin{align*}
   \begin{pmatrix}
      \ovl{P}_{22} - \rho^{-2} \ovl{P}_{22} + 2 \rho^{-2} \frac{ml q(\lambda)}{l-m}  & \bullet\\
      \ovl{P}_{32} - \rho^{-1} (A_\psi - B_\psi D_\psi^{-1} C_\psi)\ovl{P}_{32} + \frac{\rho^{-1} l q(\lambda)}{l-m}B_\psi D_\psi^{-\top} & \eta I
   \end{pmatrix},
\end{align*}
where the term \((\bullet)\) can be inferred from symmetry.

Clearly, we see that this matrix can be rendered positive definite by choosing \(\eta\) sufficiently large if and only if its upper-left block is positive definite, which is exactly \eqref{eq:synthesisCondition_b}. Moreover, since also \(\ovl{P}_{33}\) gets scaled by \(\eta\), for sufficiently large \(\eta\), \eqref{eq:transformedSchurComplementPf} is positive definite if and only if its upper-left \(3 \times 3\) block is positive definite, which is exactly \eqref{eq:synthesisCondition_c}.
\qed

\subsection{Lower bound calculation for mini-batch algorithms}
\label{app:detailsLowerBound}

In this subsection, we provide the details of the variance calculation for the specific composite objective~\eqref{eq:specificCompositeObjective} under GD, HB, NAGD and TM. This presents a lower bound on the worst-case variance of the iterates for these algorithms.

For the quadratic objective $f_i(z) = \tfrac{c_i}{2}\|z - z_i^f\|^2$, the
gradient of $f_i$ at the query point $z_t = C x_t$ is
\begin{align}
   \nabla f_i(C x_t) = c_i\,C x_t - c_i\,z_i^f.
   \label{eq:affineGradient}
\end{align}
The constant term $- c_i\,z_i^f = \nabla f_i(0)$ is the \emph{structural gradient noise}
at the optimum $z_\star^f = 0$.
Substituting~\eqref{eq:affineGradient} into~\eqref{eq:linearGradientAlgorithm}
yields the \emph{affine} closed-loop system for mini-batch $\scrI_t = \{i\}$
\begin{align}
   x_{t+1} = A_i\,x_t + b_i,
   \qquad
   A_i \coloneq A + B\,c_i\,C,
   \quad
   b_i \coloneq -B\,c_i\,z_i^f.
   \label{eq:affineClosedLoop}
\end{align}
To track $\bbE\|z_t\|^2 = C\,\bbE[x_t x_t^\top]\,C^\top$, we augment the
state with a constant component $x_t^0 \equiv 1$ via
$\tilde{x}_t \coloneq (1 ~ x_t^\top)^\top \in \bbR^{n+1}$.
The augmented system
\begin{align}
   \tilde{x}_{t+1} = \widetilde{A}_i\,\tilde{x}_t,
   \qquad
   \widetilde{A}_i \coloneq
   \begin{pmatrix} 1 & 0\\
   b_i & A_i \end{pmatrix},
   \label{eq:augmentedSystem}
\end{align}
is \emph{linear} in $\tilde{x}_t$, so the second moment
$\Sigma_t \coloneq \bbE[\tilde{x}_t\tilde{x}_t^\top]$ obeys the
\emph{homogeneous} stochastic equation
\begin{align}
   \Sigma_{t+1} = \sum_{i=1}^N p_i\,\widetilde{A}_i\,\Sigma_t\,\widetilde{A}_i^\top,
   \label{eq:augmentedSecondMoment}
\end{align}
with \emph{no} additive noise term.
The residual map $\Sigma\mapsto\sum_{i=1}^N p_i\widetilde{A}_i\Sigma\widetilde{A}_i^\top - \Sigma$ of \eqref{eq:augmentedSecondMoment} is
\emph{singular}: the unit eigenvalue of $\widetilde{A}_i$ (from the first row)
renders this Lyapunov equation operator non-invertible.
The unique meaningful fixed point of \eqref{eq:augmentedSecondMoment} has $e_1^\top\Sigma_\infty e_1 = 1$
(since $x_t^0 \equiv 1$) and is determined by the block decomposition
\begin{align}
   \Sigma_\infty
   = \begin{pmatrix} 1 & \mu_x^\top \\ \mu_x & \Sigma_{xx} \end{pmatrix},
   \label{eq:augmentedFixedPoint}
\end{align}
where $\mu_x = (I - \bar{A})^{-1}\bar{b}$ with
$\bar{A} = \sum_{i=1}^N p_i A_i$, $\bar{b} = \sum_{i=1}^N p_i b_i$
is the asymptotic mean, and $\Sigma_{xx}$ satisfies the \emph{standard}
(non-singular) stochastic Lyapunov equation
\begin{align}
   \Sigma_{xx} = \sum_{i=1}^N p_i\,A_i\,\Sigma_{xx}\,A_i^\top + Q_{\mathrm{eff}},
   \label{eq:regularLyapunov}
\end{align}
with effective noise covariance
$Q_{\mathrm{eff}} = \sum_{i=1}^N p_i\bigl(A_i\mu_x b_i^\top
  + b_i\mu_x^\top A_i^\top + b_i b_i^\top\bigr)$.
Figures~\ref{fig:nesterovConvergence} and~\ref{fig:asymptoticVariance} are
computed using the finite-horizon propagation~\eqref{eq:augmentedSecondMoment}
and the asymptotic fixed point~\eqref{eq:regularLyapunov}, respectively.

%===============================================================================
\bibliographystyle{IEEEtran}
\bibliography{references}
%===============================================================================

\end{document}